\newtheorem{conjecture}{Conjecture}
\newtheorem{theorem}{Theorem}[section]
\newtheorem{prop}[theorem]{Proposition}
\newtheorem{lemma}[theorem]{Lemma}
\newtheorem{cor}[theorem]{Corollary}
\newtheorem{claim}[theorem]{Claim}
\theoremstyle{definition}
\newtheorem{defn}[theorem]{Definition}
\newtheorem*{defn-non}{Definition}
\newtheorem{ques}[theorem]{Question}
\newtheorem*{rmk}{Remark}
\newlist{Case}{enumerate}{2}
\setlist[Case, 1]{%
    label           =   {\bfseries Case \arabic*.},
    labelindent=1em ,labelwidth=1.3cm, labelsep*=1em, leftmargin =!
}
\setlist[Case, 2]{%
    label           =   {\bfseries Subcase \arabic{Casei}.\arabic*.},
    labelindent=-1em ,labelwidth=1.3cm, labelsep*=1em, leftmargin =!
}
\newenvironment{poc}{\begin{proof}[Proof of claim]}{\end{proof}}
\newcommand{\ceil}[1]{\lceil #1\rceil}
\title{Canonical Ramsey: triangles, rectangles and beyond}
\author{
Yijia Fang\thanks{Department of Mathematics, National University of Singapore, Singapore. Email: fangyijia@u.nus.edu.}
\and
Gennian Ge\thanks{School of Mathematical Sciences, Capital Normal University, Beijing 100048, China. Email: gnge@zju.edu.cn. Gennian Ge is supported by the National Key Research and Development Program of China under Grant 2020YFA0712100, the National Natural Science Foundation of China under Grant 12231014, and Beijing Scholars Program.}
\and 
Yang Shu\thanks{School of Mathematical Sciences, University of Science and Technology of China, Hefei, Anhui 230026, China. Emails: shuyangyyyy@mail.ustc.edu.cn, dilongyang@mail.ustc.edu.cn.}
\and
Qian Xu\thanks{School of Mathematical Sciences, Peking University, Beijing 100871, China. Email: qianbrianxu@stu.pku.edu.cn. }
\and
Zixiang Xu\thanks{Extremal Combinatorics and Probability Group (ECOPRO), Institute for Basic Science (IBS), Daejeon 34126, South Korea. Email: zixiangxu@ibs.re.kr. Supported by IBS-R029-C4.}
\and
Dilong Yang \footnotemark[3]
}
\date{}
\begin{document}
\maketitle

\begin{abstract}
In a seminal work, Cheng and Xu showed that if \(\mathcal{S}\) is a square or a triangle with a certain property, then for every positive integer \(r\) there exists \(n_0(\mathcal{S})\) independent of \(r\) such that every \(r\)-coloring of \(\mathbb{E}^n\) with \(n\ge n_0(\mathcal{S})\) contains a monochromatic or a rainbow congruent copy of \(\mathcal{S}\). Geh\'{e}r, Sagdeev, and T\'{o}th formalized this dimension independence as the {canonical Ramsey property} and proved it for all hypercubes, thereby covering rectangles whose squared aspect ratio \((a/b)^2\) is rational. They asked whether this property holds for all triangles and for all rectangles.
 
\begin{enumerate}
    \item  We resolve both questions. More precisely, for triangles we confirm the property in \(\mathbb{E}^4\) by developing a novel rotation-sphereical chaining argument. For rectangles, we introduce a structural reduction to product configurations of bounded color complexity, enabling the use of the simplex Ramsey theorem together with product Ramsey theorem.
    \item Beyond this, we develop a concise perturbation framework based on an iterative embedding coupled with the Frankl–R\"{o}dl simplex super-Ramsey theorem, which yields the canonical Ramsey property for a natural class of \(3\)-dimensional simplices and also furnishes an alternative proof for triangles.

\end{enumerate}
\end{abstract}

\section{Introduction}
\subsection{Background}
Euclidean Ramsey theory, initiated in the 1970s by Erd\H{o}s, Graham, Montgomery, Rothschild, Spencer, and Straus~\cite{1973JCTA}, asks for which finite configurations \(\mathcal{S}\) every finite coloring of a sufficiently high-dimensional Euclidean space must contain a monochromatic congruent copy of \(\mathcal{S}\).
Beyond its concise formulation, the subject provides a natural setting for understanding how combinatorial constraints from finite colorings enforce geometric patterns under the isometries of \(\mathbb{E}^n\). Over the past five decades this line of inquiry has fostered sustained interaction between combinatorics and geometry; see~\cite{2019DCGCONLON,2022ConlonNonspherical,conlon2022more,2024Yip,2026JCTA,1990JAMS,2004FranklRodl,2025EUJC,1980GrahamJCTA,2009Combinatorica,2007DCGAnote,SHADER1976385} and the references therein.
Methods range from classical Ramsey and extremal techniques to probabilistic and geometric arguments, and the problems often interface with stability phenomena, transference principles, and the construction of color-avoiding obstructions.

In parallel, Gallai–Ramsey theory studies the dichotomy ``monochromatic vs rainbow'' in colored structures~\cite{2020SIDMABalogh,1967Gallai,2010JGTGyarfas,2024LIDM,2020JGTLiu,2022JGT,2023JCTAMao,2023EuJC}.
Merging these directions, Mao, Ozeki, and Wang~\cite{2022arxivEGR} proposed the \emph{Euclidean Gallai–Ramsey} paradigm: for configurations \(\mathcal{K}_1,\mathcal{K}_2\) and a positive integer \(r\),
\[
\mathbb{E}^n \overset{r}{\rightarrow} (\mathcal{K}_1;\mathcal{K}_2)_{\mathrm{GR}}
\]
to mean that every coloring \(\chi:\mathbb{E}^n\to[r]\) yields a monochromatic copy of \(\mathcal{K}_1\) or a rainbow copy of \(\mathcal{K}_2\).
This framework packages a canonical alternative that refines classical existence questions within a unified Euclidean setting.

\subsection{Known results and two central problems of Gehe\'{e}r, Sagdeev and T\'{o}th}
Cheng and Xu~\cite{2025DCGChengXu} studied this problem for various fundamental configurations, with particular focus on triangles, squares, and lines. They discovered a \emph{dimension-independence phenomenon}: once the ambient dimension exceeds a threshold depending only on the configuration, monochromatic or rainbow copies are unavoidable. For instance, for any right or acute triangle, or any square \(\mathcal{S}\), there exists an integer \(n_0\), independent of \(r\), such that \(\mathbb{E}^n \overset{r}{\rightarrow} (\mathcal{S}; \mathcal{S})_{\mathrm{GR}}\) holds for all \(n \ge n_0\); see also their canonical results for certain simplices~\cite[Corollaries~1.5 and 1.7]{2025DCGChengXu}.

To formalize this, Gehe\'{e}r, Sagdeev, and T\'{o}th~\cite{2024Cano} introduced the \emph{canonical Ramsey property}: a finite configuration \(\mathcal{X}\) exhibits this property if there exists \(n_0=n_0(\mathcal{X})\) such that, for any positive integer \(r\) and any \(n\ge n_{0}\),
\[
\mathbb{E}^{n} \overset{r}{\rightarrow} (\mathcal{X}; \mathcal{X})_{\mathrm{GR}}.
\]
The authors in~\cite{2024Cano} established this for all hypercubes and thereby for rectangles with side lengths \(a,b\) such that \((a/b)^2\) is rational, and they improved the acute-triangle case of~\cite{2025DCGChengXu} by showing that for any fixed acute triangle \(\mathcal{T}\) one can take \(n_0(\mathcal{T})=3\).

Despite these advances, two problems were explicitly highlighted in~\cite{2024Cano} as the main outstanding challenges in the theory.

\begin{ques}[Gehe\'{e}r-Sagdeev-T\'{o}th~\cite{2024Cano}]\
\begin{enumerate}
    \item[\textup{(1)}] Do all triangles have the canonical Ramsey property? 
    \item[\textup{(2)}] Do all rectangles have the canonical Ramsey property?
\end{enumerate}
\end{ques}

\subsection{Our contributions}
We settle both questions of Gehe\'er-Sagdeev-T\'{o}th in the affirmative.

\begin{theorem}\label{thm:triangle}
    Let \(\mathcal{T}\) be a triangle and let \(r\) be a positive integer. Then
    \[
    \mathbb{E}^{4}\overset{r}{\rightarrow}(\mathcal{T};\mathcal{T})_{\mathrm{GR}}.
    \]
\end{theorem}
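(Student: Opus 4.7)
The plan is to argue by contradiction. Since Geh\'er, Sagdeev and T\'oth already handled the acute case in $\mathbb{E}^3$, I focus on triangles $\mathcal{T}$ with sides $a\le b\le c$ satisfying $a^2+b^2\le c^2$, and assume that some $r$-coloring $\chi:\mathbb{E}^4\to[r]$ avoids both monochromatic and rainbow congruent copies of $\mathcal{T}$. The central geometric object is, for each pair $A,B\in\mathbb{E}^4$ at distance $c$, the locus
\[
\Sigma(A,B) \;=\; \{C\in\mathbb{E}^4 : |AC|=b,\ |BC|=a\},
\]
a genuine $2$-sphere of fixed radius $\rho=\rho(\mathcal{T})$ lying in the $3$-dimensional affine subspace perpendicular to $\overline{AB}$ through the foot of the $C$-altitude. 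In $\mathbb{E}^3$ this locus would collapse to a $1$-sphere; the upgrade to a $2$-sphere in $\mathbb{E}^4$ is exactly the extra rotational freedom that drives the argument.

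Every point of $\Sigma(A,B)$ completes a congruent copy of $\mathcal{T}$, so the hypothesis forces the following color dichotomy: if $\chi(A)=\chi(B)=i$ then $\chi(\Sigma(A,B))\subseteq[r]\setminus\{i\}$, while if $\chi(A)\neq\chi(B)$ then $\chi(\Sigma(A,B))\subseteq\{\chi(A),\chi(B)\}$; in either case the effective palette on $\Sigma(A,B)$ shrinks. The rotation-spherical chaining exploits this as follows. Fix a base point $A_0$ of color $1$ and the two concentric $3$-spheres $S_c=\{X:|A_0X|=c\}$ and $S_b=\{X:|A_0X|=b\}$. As $B$ ranges over $S_c$, the family $\{\Sigma(A_0,B)\}_{B\in S_c}$ covers $S_b$; symmetrically, each $C\in S_b$ is surrounded by a $2$-sphere of witnesses $\{B\in S_c:|BC|=a\}$. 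Applying the dichotomy at each witness imposes incompatible constraints unless $\chi(C)$ takes a very specific value: for instance, if the witness sphere at $C$ contains both a color-$1$ point and two distinct non-$1$ colors, the two cases of the dichotomy force $\chi(C)\neq 1$ and $\chi(C)=1$ simultaneously. Iteratively pivoting between $S_b$ and $S_c$, and then around a freshly chosen base point $A_1$, propagates the color restriction onto larger and larger spherical patches.

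The chain terminates at a $3$-sphere whose coloring is forced down to at most two colors. A sphere Ramsey input -- the Frankl--R\"odl simplex super-Ramsey theorem applied at a scale adapted to the circumradius of $\mathcal{T}$, on a $3$-sphere of sufficiently large radius available inside $\mathbb{E}^4$ -- then locates a monochromatic congruent copy of $\mathcal{T}$, contradicting the hypothesis; alternatively, if the chain produces three pairwise distinct colors arranged in the right geometric configuration, a rainbow copy emerges directly.

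The main obstacle is the chaining itself. The two horns of the dichotomy interact only through how witness $2$-spheres overlap, and the monochromatic-pair case removes only one color while the bichromatic-pair case caps the palette at two specific colors. The technical heart is therefore to choose the pivot points $A_0,A_1,\ldots$ so that the surviving palette strictly shrinks at each step, uniformly in the original number of colors $r$; preventing the chain from stalling on an invariant family of bichromatic $2$-spheres is precisely where the novelty of the rotation-spherical chaining in $\mathbb{E}^4$ is needed.
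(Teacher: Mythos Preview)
Your proposal has a genuine gap at the chaining step, and you acknowledge it yourself: you write that ``preventing the chain from stalling on an invariant family of bichromatic $2$-spheres is precisely where the novelty \ldots\ is needed,'' but you do not supply that novelty. The color dichotomy on $\Sigma(A,B)$ is correct and easy, but by itself it is too weak: when $\chi(A)\neq\chi(B)$ the palette on $\Sigma(A,B)$ is capped at two \emph{specific} colors, and nothing you have written prevents every bichromatic witness pair from producing the same two colors forever, in which case the palette never shrinks below two. You also have no concrete mechanism for producing a bichromatic pair $(A,B)$ whose two colors are new relative to an earlier stage. The final appeal to a Frankl--R\"odl ``sphere Ramsey input'' on a $3$-sphere in $\mathbb{E}^4$ is also not justified: Frankl--R\"odl is a high-dimensional statement, and a $2$-colored $3$-sphere in $\mathbb{E}^4$ does not automatically contain a monochromatic copy of an arbitrary obtuse $\mathcal{T}$.

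The paper's argument is structurally different and avoids this impasse. Rather than anchoring on $A,B$ at distance $c$ and studying the completion sphere $\Sigma(A,B)$, it takes $A,B$ at a \emph{small} distance $\varepsilon$ with $\chi(A)\neq\chi(B)$ and studies the $2$-sphere $\Gamma=\{Z:|ZA|=|ZB|=c\}$. The key is a five-point rigidity lemma: for any $P,M\in\Gamma$ at a specific chord length $\ell_{a,b,c}(\varepsilon)$, one can construct a fifth point $N$ with $|NA|=|NB|=b$ and $|NP|=|NM|=a$, so that $\triangle NPA,\triangle NPB,\triangle NMA,\triangle NMB$ are all congruent to $\mathcal{T}$; a two-case analysis then forces $\chi(P)=\chi(M)$. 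Chaining along polygonal paths of step $\ell_{a,b,c}(\varepsilon)$ on $\Gamma$ makes the entire $2$-sphere monochromatic in one stroke --- no palette-shrinking iteration and no dependence on $r$. From there the paper splits on whether $\operatorname{rad}(\mathcal{T})<c$ (a monochromatic copy lies on $\Gamma$ directly) or $\operatorname{rad}(\mathcal{T})\ge c$ (a shell argument produces a new close bichromatic pair $P,Q$, and a second application of the monochromatic-sphere lemma yields a contradiction). The five-point construction is the missing idea in your plan; without an analogue of it, the dichotomy on $\Sigma(A,B)$ does not globalize.
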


\begin{theorem}\label{thm:rectangle}
Let \(r\) be a positive integer and let \(\mathcal{R}\) be a rectangle. There exists an integer \(n_0 = n_0(\mathcal{R})\) such that for all \(n \ge n_0\),
\[
    \mathbb{E}^{n} \overset{r}{\rightarrow} (\mathcal{R}; \mathcal{R})_{\mathrm{GR}}.
\]
\end{theorem}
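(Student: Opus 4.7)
The plan is to realize $\mathcal{R}$ as the Cartesian product of two segments inside an orthogonal decomposition $\mathbb{E}^n = V_1 \oplus V_2$, writing its vertices as $\{p_1, p_2\}\times\{q_1, q_2\}$ with $|p_1-p_2|=a$ in $V_1$ and $|q_1-q_2|=b$ in $V_2$. Given $\chi:\mathbb{E}^n\to[r]$, the aim is to find such a rectangle on which the four vertex colors $(\chi(p_i,q_j))$ form either a constant or an all-distinct $2\times 2$ matrix, with $n_0=\dim V_1 + \dim V_2$ depending only on $\mathcal{R}$.

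The central obstacle is precisely this $r$-independence of $n_0$. A naive pigeonhole argument --- embed a regular $b$-simplex $\{q_1,\ldots,q_{r+1}\}\subset V_2$, define the compound coloring $\bar\chi(p)=(\chi(p,q_1),\ldots,\chi(p,q_{r+1}))$ on $V_1$, and apply Frankl--Wilson to locate an $a$-equidistant $\bar\chi$-monochromatic pair --- does yield a monochromatic rectangle, but with $\dim V_1$ growing in $r$ (the auxiliary palette has $r^{r+1}$ colors), which the canonical Ramsey property forbids. The remedy is a \emph{structural reduction}: replace the $r$-dependent compound coloring by an auxiliary coloring whose number of classes is bounded by a function of $\mathcal{R}$ alone.

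Concretely, I would work on product configurations $A\times B$ with $A\subset V_1$ and $B\subset V_2$ the vertex sets of regular simplices of edges $a$ and $b$ of a fixed constant size; each $2\times 2$ sub-product is classified by its color pattern modulo permutation of colors, a set of bounded size $C=C(\mathcal{R})$. Within this bounded-complexity regime, the Frankl--R\"odl simplex super-Ramsey theorem, applied to the auxiliary $C$-coloring, produces rich monochromatic factor simplices with dimension bounds depending only on $C$. The product Ramsey theorem then combines the factor simplices into a product structure in which every $2\times 2$ sub-pattern falls into a single auxiliary class; a final pigeonhole shows that this class must be either ``constant'' or ``all-distinct'', giving the desired monochromatic or rainbow rectangle.

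The technical heart is the reduction step: one must show that, after passing to the bounded-complexity auxiliary coloring, the ``mixed'' types (row-constant, column-constant, or checkerboard $c_{11}=c_{22},\, c_{12}=c_{21}$) can be iteratively eliminated --- absorbed into the monochromatic case via further Ramsey passes or excluded by a careful structural argument --- while preserving $r$-independent dimension bounds. This bookkeeping is precisely what distinguishes the present argument from the hypercube-embedding method of Geh\'er--Sagdeev--T\'oth, which handles only the case $(a/b)^2\in\mathbb{Q}$.
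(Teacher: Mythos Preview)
Your proposal correctly isolates the obstacle (getting $n_0$ independent of $r$) and names the right toolkit (simplex Ramsey, product Ramsey, a bounded auxiliary coloring). But the proposal is a plan, not a proof: the two places where actual work has to happen are both left as promises.

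First, the ``auxiliary $C$-coloring'' is never defined. A $2\times 2$ pattern type is an invariant of a \emph{quadruple} $(p_1,p_2,q_1,q_2)$, not of a single point, so it does not directly furnish a coloring of $V_1$ or of $V_1\times V_2$ to which Frankl--R\"odl or product Ramsey could be applied. You need to specify what object carries the color and why a monochromatic sub-configuration under that coloring forces every $2\times 2$ sub-product to have the same pattern type.

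Second, and more seriously, ``eliminating the mixed types'' is the entire content of the theorem, and you explicitly defer it. Take the row-constant case: if every $2\times 2$ block has colors $(c,c,d,d)$, then each fiber $\{p\}\times B$ is monochromatic, and you have reduced to finding two $p$'s at distance $a$ with the same fiber color. That is an $r$-coloring problem again, and the pigeonhole on a constant-size simplex $A$ fails once $r>|A|$. Nothing in your outline breaks this loop.

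The paper's route is different and supplies exactly the missing mechanism. Its key lemma is a dichotomy: any $r$-coloring of the finite configuration $\mathcal{S}_{3s+1}(a)\times\mathcal{B}_s(a,b)$ (regular simplex times a polygonal ``broken path'' with long chord $a$) contains either a rainbow $\mathcal{R}$ or a \emph{monochromatic segment} $\ell_a$. The auxiliary coloring of a point $u\in\mathbb{E}^{n_0}$ is then the \emph{position} of a witnessed monochromatic $\ell_x$ inside the attached copy of $\mathcal{S}_{3m+1}(x)\times\mathcal{B}_m(x,y)$; there are only $q=q(\mathcal{R})$ such positions, independent of $r$. A $q$-color Ramsey application produces a monochromatic $\mathcal{S}_7(y)\times\mathcal{B}_2(y,x)$, and a second invocation of the dichotomy lemma (with roles of $x,y$ swapped) yields a monochromatic $\ell_y$ that, together with the aligned $\ell_x$'s, closes up to a monochromatic $\mathcal{R}$. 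The point you are missing is this monochromatic-segment-versus-rainbow-rectangle dichotomy and the idea of recording the \emph{location} of the segment as the bounded auxiliary color.
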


Theorem~\ref{thm:triangle} shows that four dimensions already suffice for every triangle, sharpening the general expectation and subsuming the previously treated acute and right cases. Our proof creates a rotation–spherical chaining argument which manufactures the desired copy from an assumed obstruction. Theorem~\ref{thm:rectangle} treats all aspect ratios. Here the key input is a bounded color complexity reduction that extracts an auxiliary product structure inside \(\mathbb{E}^{n}\); this unlocks a combination of the Frankl–R\"odl simplex Ramsey theorem~\cite{1990JAMS} with product Ramsey property~\cite{1973JCTA} to pass from local control to the global configuration.

Beyond triangles and rectangles, we develop a general high-dimensional mechanism that we expect to be of independent interest.
At its core lies a \emph{canonical perturbation} step that, in a product ambient space, selects a single tuple of auxiliary fiber points enforcing a uniform local non-monochromaticity on prescribed \(\varepsilon\)-spheres.
This principle is powered by the simplex super-Ramsey theorem of Frankl--R\"{o}dl~\cite{1990JAMS} and underpins our subsequent constructions. For clarity we record the ambient notion of \emph{\(\varepsilon\)-sphere} we use. We usually write \(\|\cdot\|\) for the Euclidean distance.

\begin{defn}[\(\varepsilon\)-sphere]\label{def:eps-sphere}
Fix integers \(m\ge 1\) and \(n_0,\dots,n_m\ge 1\) and identify the ambient space
\(\mathbb{E}^{n_0}\times\cdots\times\mathbb{E}^{n_m}\).
For a point \(\boldsymbol{y}=(\boldsymbol{y}_0,\dots,\boldsymbol{y}_m)\) with \(\boldsymbol{y}_i\in\mathbb{E}^{n_i}\) and an index \(j\in[m]\), the \emph{\(\varepsilon\)-sphere of \(\boldsymbol{y}\) in \(\mathbb{E}^{n_j}\)} is
\[
S_j(\boldsymbol{y};\varepsilon)
:=\Bigl\{(\boldsymbol{y}_0,\dots,\boldsymbol{y}_{j-1},\boldsymbol{z},\boldsymbol{y}_{j+1},\dots,\boldsymbol{y}_m):
\ \boldsymbol{z}\in\mathbb{E}^{n_j},\ \|\boldsymbol{z}-\boldsymbol{y}_j\|=\varepsilon\Bigr\},
\]
that is, the usual Euclidean sphere of radius \(\varepsilon\) in the \(j\)-th fiber with all other coordinates fixed.
\end{defn}

We now state our canonical perturbation theorem, which asserts that, assuming there is no monochromatic copy of a fixed simplex \(\Delta\), one can choose a single tuple of fiber points so that every prescribed \(\varepsilon\)-sphere around each anchor is necessarily multicolored. 

\begin{theorem}\label{thm:canonical-perturbation}
Let \(d,n_0,m,r\ge 1\) be integers and let \(\Delta\subseteq\mathbb{E}^{d}\) be a simplex.
Let \(\mathcal{X}=\{\boldsymbol{v}_1,\dots,\boldsymbol{v}_m\}\subseteq \mathbb{P}_0\cong\mathbb{E}^{n_0}\) be any \(m\)-point set, and fix \(\varepsilon>0\).
Then there exist integers \(n_1,\dots,n_m\) with the following property: for every \(r\)-coloring \(\chi\) of the orthogonal product
\[
\mathbb{P}:=\mathbb{P}_0\times\mathbb{P}_1\times\cdots\times\mathbb{P}_m,
\qquad \mathbb{P}_i\cong\mathbb{E}^{n_i}\ (i=0,1,\dots,m),
\]
that contains no monochromatic copy of \(\Delta\), there exist points \(\boldsymbol{x}_i\in\mathbb{P}_i\) for \(i=1,\dots,m\) such that, writing
\(\boldsymbol{p}_i:=(\boldsymbol{v}_i,\boldsymbol{x}_1,\dots,\boldsymbol{x}_m)\in\mathbb{P}\),
each \(\varepsilon\)-sphere \(S_i(\boldsymbol{p}_i;\varepsilon)\) is not monochromatic under \(\chi\).
\end{theorem}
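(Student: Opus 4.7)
The plan is to argue by contradiction, combining an iterated product–Ramsey extraction powered by the Frankl--R\"odl simplex super-Ramsey theorem with a sphere-perturbation step that forces a forbidden monochromatic copy of \(\Delta\). Suppose the conclusion fails. Then for every tuple \((\boldsymbol{x}_1,\dots,\boldsymbol{x}_m)\in\mathbb{P}_1\times\cdots\times\mathbb{P}_m\) the \emph{bad-index set}
\[
B(\boldsymbol{x}_1,\dots,\boldsymbol{x}_m):=\{\,i\in[m]:S_i(\boldsymbol{p}_i;\varepsilon)\text{ is monochromatic}\,\}
\]
is nonempty, and \(B\) defines a \((2^m-1)\)-coloring of the auxiliary product \(\mathbb{P}_1\times\cdots\times\mathbb{P}_m\).

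First I would take the \(n_j\) large enough to iteratively apply the simplex super-Ramsey theorem coordinate by coordinate, producing subsets \(U_j\subseteq\mathbb{P}_j\) (each hosting a spacious Euclidean subspace in which congruent copies of \(\Delta\) can be found and perturbed) such that \(B\) is constant and equal to some nonempty \(B^*\subseteq[m]\) on the combinatorial box \(U_1\times\cdots\times U_m\). Fix \(i^*\in B^*\) and arbitrary \(\boldsymbol{x}_j\in U_j\) for every \(j\neq i^*\). Each \(\boldsymbol{x}_{i^*}\in U_{i^*}\) now yields a monochromatic sphere \(S_{i^*}\) of some single color; this gives an \(r\)-coloring of \(U_{i^*}\), and one further application of Frankl--R\"odl produces \(d+1\) points \(\boldsymbol{y}_0,\dots,\boldsymbol{y}_d\in U_{i^*}\) forming a congruent copy of \(\Delta\) whose associated \(\varepsilon\)-spheres are all monochromatic in a common color \(c\). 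Because the non-\(i^*\) coordinates are fixed, these \(d+1\) spheres all lie in the single slice \(\{\boldsymbol{v}_{i^*}\}\times\{\boldsymbol{x}_1\}\times\cdots\times\mathbb{P}_{i^*}\times\cdots\times\{\boldsymbol{x}_m\}\cong\mathbb{P}_{i^*}\), so provided \(n_{i^*}\ge d+1\) I can choose a vector \(\boldsymbol{e}\) of norm \(\varepsilon\) orthogonal to the affine hull of \(\{\boldsymbol{y}_j\}\). The translates \(\{\boldsymbol{y}_j+\boldsymbol{e}\}\) lie on the monochromatic spheres, so they are uniformly colored \(c\), and since translation by a common vector preserves congruence, they still form a copy of \(\Delta\). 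This is a monochromatic copy of \(\Delta\) in \(\mathbb{P}\), contradicting the hypothesis.

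The main obstacle is the iterated product extraction itself: the coloring \(B\) entangles all \(m\) coordinates, and one must arrange \(n_1,\dots,n_m\) so that a Frankl--R\"odl super-Ramsey step applied in each fiber in turn leaves a combinatorial box \(U_1\times\cdots\times U_m\) that is simultaneously large enough in the \(i^*\)-th factor to host a second super-Ramsey step and rigid enough in the remaining factors to pin \(B\) to a constant value. Plain Ramsey applied fiberwise would degrade to subsets that lack the product structure required for the closing orthogonal perturbation; it is precisely the super-Ramsey strengthening of Frankl--R\"odl that threads the nested auxiliary colorings through the iteration and secures the geometric rigidity needed for the final perturbation step.
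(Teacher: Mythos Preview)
Your closing perturbation step---translate a found copy of \(\Delta\) by a vector of norm \(\varepsilon\) inside the fiber so as to land on the monochromatic spheres---is exactly right and is what the paper does too. The gap is precisely where you flag it: the iterated product extraction. Super-Ramsey does not hand you ``spacious Euclidean subspaces''; it hands you a finite configuration, and a product-Ramsey pass through the \(m\) fibers leaves each \(U_j\) equal to a single copy of whatever pattern you fed in. If that pattern is \(\Delta\) itself, then \(U_{i^*}\) has only \(d+1\) points and there is no room for a second Frankl--R\"odl step with \(r\) colors to force the sphere-colors to agree. If the pattern is a larger simplex, you need it simultaneously to be a simplex (so that super-Ramsey applies to it) and to host enough congruent copies of \(\Delta\) for an \(r\)-color Ramsey argument---but a generic simplex contains no congruent copy of \(\Delta\) at all. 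Your sketch does not resolve this tension, and as written the two requirements on \(U_{i^*}\) are incompatible.

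The paper breaks the deadlock with a construction you have not anticipated: it builds an auxiliary simplex \(\mathcal{B}\supseteq\Delta\) by adjoining, for each vertex \(\boldsymbol{w}_i\) of \(\Delta\), a chain of points spaced at distance \(<\varepsilon\) along the segment from the origin to \(\boldsymbol{w}_i\), lifted into fresh coordinates to preserve affine independence. The point of the chains is that the union of the \(\varepsilon\)-spheres around the points of \(\mathcal{B}\) is \emph{connected}. Hence if every such sphere were monochromatic they would all share one color, and your own translation argument applied to the \(d+1\) base points of \(\mathcal{B}\) already yields a monochromatic \(\Delta\). This connectivity trick eliminates the second, \(r\)-dependent Frankl--R\"odl call entirely: the only pattern one ever needs super-Ramsey for is the fixed simplex \(\mathcal{B}\). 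The iteration then runs not on your bad-index coloring but on the map sending each tail to a witnessing prefix \((\boldsymbol{x}_1,\dots,\boldsymbol{x}_i)\in\mathcal{B}\times\mathcal{X}_2\times\cdots\times\mathcal{X}_i\); pigeonhole on this finite, \(r\)-independent range plus super-Ramsey for \(\mathcal{B}\) in \(\mathcal{X}_{i+1}\) advances the induction. A byproduct is that the resulting dimensions \(n_1,\dots,n_m\) do not depend on \(r\), which is exactly what the downstream canonical-Ramsey applications require.
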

This uniform local obstruction later allows us to recover the prescribed distances and run the color-structure analysis that produces a monochromatic or a rainbow copy of \(\Delta\). Using this, we can provide a new and short proof of canonical Ramsey property for all triangles, see~\cref{subsection:NewproofTriangle}. Moreover, we can show the canonical Ramsey property to certain \(3\)-dimensional simplices.

\begin{theorem}\label{thm:3-dimensionalSimplex}
   Let \(r\) be a positive integer. Let \(\mathcal{S}\) be a \(3\)-dimensional simplex such that at least one of the heights of \(\mathcal{S}\) is greater than the circumradius of one of its four faces. Then there exists some \(n_{0}(\mathcal{S})\) such that for any \(n\ge n_{0}(\mathcal{S})\)
    \[
    \mathbb{E}^{n} \overset{r}{\rightarrow} (\mathcal{S}; \mathcal{S})_{\mathrm{GR}}.
\]
\end{theorem}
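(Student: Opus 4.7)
The plan is to deploy Theorem~\ref{thm:canonical-perturbation} in a carefully designed product ambient space so that one of the produced \(\varepsilon\)-spheres coincides exactly with the locus of admissible fourth vertices that complete a chosen anchor face to a congruent copy of \(\mathcal{S}\), and then to combine these non-monochromatic spheres with the Frankl--R\"odl simplex super-Ramsey theorem from~\cite{1990JAMS}.

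After relabeling, we may assume that the height \(h\) from \(w_4\) to the opposite face \(F=\{w_1,w_2,w_3\}\) satisfies \(h>R:=R(F)\). Let \(w_4'\) be the orthogonal projection of \(w_4\) onto the affine span of \(F\), and place \(\mathcal{X}=\{w_1,w_2,w_3,w_4'\}\) in \(\mathbb{P}_0\cong\mathbb{E}^2\). Taking \(\Delta=\mathcal{S}\), \(\varepsilon=h\), and \(m=4\) in Theorem~\ref{thm:canonical-perturbation} supplies fiber dimensions \(n_1,\dots,n_4\); I fix \(n_0(\mathcal{S})\ge 2+n_1+\cdots+n_4\) and identify \(\mathbb{E}^{n_0(\mathcal{S})}\) isometrically with \(\mathbb{P}=\mathbb{P}_0\times\mathbb{P}_1\times\cdots\times\mathbb{P}_4\). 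For any \(r\)-coloring \(\chi\) with no monochromatic copy of \(\mathcal{S}\), the theorem produces anchors \(p_i=(v_i,x_1,\dots,x_4)\) for which every \(\varepsilon\)-sphere \(S_i(p_i;h)\) is non-monochromatic. A direct Pythagorean computation gives \(\|q-p_i\|^2=\|v_4-v_i\|^2+h^2=\|w_4-w_i\|^2\) for every \(q\in S_4(p_4;h)\) and each \(i\in\{1,2,3\}\), so \(S_4(p_4;h)\) is exactly the locus of points completing \((p_1,p_2,p_3)\) to a congruent copy of \(\mathcal{S}\).

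Now suppose for contradiction that \(\chi\) also admits no rainbow copy of \(\mathcal{S}\). If the anchor triangle \((p_1,p_2,p_3)\) uses fewer than three distinct colors, I first pass to a rainbow anchor by perturbing along the non-monochromatic spheres \(S_1(p_1;h), S_2(p_2;h), S_3(p_3;h)\) and invoking the canonical Ramsey theorem for triangles (Theorem~\ref{thm:triangle}) inside each fiber to swap out anchors for points of new colors while preserving congruence of the anchor face, at the cost of enlarging the product. Once \((p_1,p_2,p_3)\) uses three distinct colors, the no-rainbow hypothesis forces \(\chi(S_4(p_4;h))\subseteq\{\chi(p_1),\chi(p_2),\chi(p_3)\}\); in particular \(S_4(p_4;h)\) is colored with at most three colors. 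Since its radius \(h\) exceeds \(R\) and the color count is bounded independently of \(r\), the Frankl--R\"odl simplex super-Ramsey theorem produces a monochromatic copy of \(F\) inside \(S_4(p_4;h)\). I then cascade on the completion sphere of this new monochromatic \(F\), which again has radius \(h>R\), tracking the strictly shrinking palette of admissible colors imposed by the no-rainbow-\(\mathcal{S}\) assumption; after at most three such iterations the emerging color structure becomes incompatible with one of the two standing assumptions, producing the desired monochromatic or rainbow copy.

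The principal obstacle is closing the cascade. For non-symmetric \(\mathcal{S}\) the completion sphere of the monochromatic copy of \(F\) found inside \(S_4(p_4;h)\) need not contain the original anchors \(p_1,p_2,p_3\), so color constraints do not propagate by a direct duality argument. I must argue that the palette of colors compatible with the no-rainbow-\(\mathcal{S}\) assumption on each successive completion sphere strictly shrinks, exploiting jointly the non-monochromaticity of all four \(\varepsilon\)-spheres produced by Theorem~\ref{thm:canonical-perturbation} and the Frankl--R\"odl theorem at every stage; the hypothesis \(h>R\) is used at each level to guarantee that the newly emerged sphere has sufficient radius to host a copy of \(F\), which keeps the Ramsey step applicable and terminates the iteration in finitely many rounds depending only on \(\mathcal{S}\).
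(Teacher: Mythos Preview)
Your cascade does not close. Once you locate a monochromatic copy $F'\cong F$ in color $c$, the no-rainbow hypothesis imposes \emph{no} constraint whatsoever on the completion sphere of $F'$: any fourth vertex of a different color produces a two-colored tetrahedron, which is neither monochromatic nor rainbow. Thus that completion sphere may carry all $r-1$ remaining colors, the palette does not shrink, and the iteration cannot terminate in a number of rounds bounded independently of $r$. The preceding ``rainbow anchor'' step is also broken: perturbing $p_i$ within $S_i(p_i;h)$ changes its distance to $p_j$ from $\|w_i-w_j\|$ to $\sqrt{\|w_i-w_j\|^2+h^2}$, so congruence of the anchor face is destroyed, and invoking Theorem~\ref{thm:triangle} does not repair this. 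In fact, in your setup points on $S_1,S_2,S_3$ participate in no copies of $\mathcal{S}$ with anything else in sight, so their non-monochromaticity is idle.

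The paper's argument is structurally different. It first contracts $\mathcal{S}$ to a tetrahedron $\mathcal{T}'$ via Lemma~\ref{lem:simplex perturbation}, with \emph{small} $\varepsilon$, so that perturbing \emph{every} vertex by $\varepsilon$ in its own orthogonal fiber restores the original edge lengths; hence at each copy of $\mathcal{T}'$ inside a carefully built finite configuration $\mathcal{X}$, all four vertices simultaneously carry palettes $|C_i|\ge 2$, and any choice of one color per palette corresponds to a genuine copy of $\mathcal{S}$. The no-monochromatic/no-rainbow hypothesis then forces the quadruple $(C_1,C_2,C_3,C_4)$ into one of two explicit combinatorial types. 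The real work is the design of $\mathcal{X}$: many copies of $\mathcal{T}'$ linked by equilateral chains (to propagate type information via five-point gadgets) and by the seven-point configuration of Proposition~\ref{claim:type A} (which is precisely where the hypothesis $h>R$ is used), arranged so that both types are eventually eliminated. Your outline contains no analogue of this palette-classification-and-propagation mechanism.
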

\cref{thm:3-dimensionalSimplex} substantially strengthens the earlier \(3\)-dimensional simplices result \cite[Corollary~1.7]{2025DCGChengXu}: while Cheng and Xu assumed that {all four} heights exceed the circumradius of the target simplex, our theorem requires only one height to exceed the circumradius of some face. 

We prove the three results in separate sections and, for clarity, allow slightly different notation; each section records its standing notation at the outset.

\section{Triangles: Proof of~\cref{thm:triangle}}
By the results of~\cite{2025DCGChengXu,2024Cano}, it suffices to treat the case in which \(\mathcal{T}\) is an obtuse triangle with side lengths \(a\le b<c\) satisfying \(a+b>c\) and \(a^{2}+b^{2}<c^{2}\). Moreover, \cite{1975EGMB} shows that every \(2\)-coloring of \(\mathbb{E}^{3}\) already contains a monochromatic copy of \(\mathcal{T}\); hence we may assume \(r\ge 3\). We begin by recording the necessary definitions, notation, and auxiliary facts, and then present the full proof.

\subsection{Notation and auxiliary results}
Throughout the proof, since this proof mainly relies on geometric argument, we use \(|AB|\) for the Euclidean distance between points \(A\) and \(B\). For a point \(A\) and a segment \(BC\), we write \(d(A,BC)\) for the Euclidean distance from \(A\) to the segment \(BC\). We write \(\triangle ABC\) for the triangle with vertices \(A,B,C\). Setting \(a=|BC|\), \(b=|CA|\), and \(c=|AB|\), let \(\operatorname{Area}(\triangle ABC)\) denote its area, Heron’s formula reads
\[
16\cdot\operatorname{Area}(\triangle ABC)^2
=2a^2b^2+2b^2c^2+2c^2a^2-a^4-b^4-c^4.
\]
The notation \(AB \perp CD\) indicates that the segments \(AB\) and \(CD\) are perpendicular. For a sphere \(\Gamma\) we write \(\operatorname{rad}(\Gamma)\) for its radius. For a (nondegenerate) simplex \(\mathcal{S}\), \(\operatorname{rad}(\mathcal{S})\) denotes its circumradius, that is, the radius of the smallest sphere containing \(\mathcal{S}\). For a set \(\mathcal{F}\subseteq \mathbb{E}^n\), we write \(\operatorname{aff}(\mathcal{F})\) for the affine subspace induced by \(\mathcal{F}\), that is,
\[
\operatorname{aff}(\mathcal{F})
=\Bigl\{\ \lambda_1 \boldsymbol{x}_1+\cdots+\lambda_m \boldsymbol{x}_m\ \Bigm|\ 
m\in\mathbb{N},\ \boldsymbol{x}_i\in\mathcal{F},\ \lambda_i\in\mathbb{R},\ \sum_{i=1}^m \lambda_i=1\ \Bigr\}.
\]
For \(N\in\mathbb{N}\) and \(s>0\), an \emph{\(N\)-sphere} in \(\mathbb{E}^m\) with \(m\ge N+1\) is the set
\[
\Gamma_N(s;O,\Pi)\;:=\;\{\,A\in \Pi:\ |AO|=s\,\},
\]
where \(\Pi\subseteq\mathbb{E}^m\) is an affine \((N+1)\)-plane and \(O\in\Pi\) is the center. Its intrinsic dimension is \(N\), its radius is \(s\), and \(O\) is its center. When the ambient data \((O,\Pi)\) are clear from context, we simply write \(\Gamma_N(s)\).

For an obtuse triangle with side lengths \(a,b,c\) with \(a\le b<c\), we define
\[
\Delta:= -a^{4}-b^{4}-c^{4}+2a^{2}b^{2}+2b^{2}c^{2}+2c^{2}a^{2}\ (>0),
\quad
h(a,b,c):=\frac{\sqrt{\Delta}}{a}.
\]
For \(0<\varepsilon<h(a,b,c)\), set
\[
\ell_{a,b,c}(\varepsilon):=
\sqrt{\frac{\Delta-a^{2}\varepsilon^{2}}{\,b^{2}-(\varepsilon/2)^{2}\,}}.
\]
We then build the following inequalities.
\begin{prop}\label{prop:KeyInequality}\
\begin{enumerate}
\item[\textup{(1)}] \(h(a,b,c)<2b\).
\item[\textup{(2)}] If \(0<\varepsilon<h(a,b,c)\), then
\(\ \ell_{a,b,c}(\varepsilon)<2\sqrt{\,c^{2}-(\varepsilon/2)^{2}\,}\).
\end{enumerate}
\end{prop}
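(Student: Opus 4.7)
The plan is to unify both inequalities through the two ``Heron-style'' identities
\[
\Delta \;=\; 4a^{2}b^{2}-(a^{2}+b^{2}-c^{2})^{2} \;=\; 4b^{2}c^{2}-(b^{2}+c^{2}-a^{2})^{2},
\]
each a one-line expansion of the definition of $\Delta$ (equivalently, $\Delta = 16\operatorname{Area}^{2} = (2ab\sin C)^{2}=(2bc\sin A)^{2}$ via the law of cosines). Each of (1) and (2) will reduce to a single perfect-square comparison.

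For part (1), the first identity immediately gives $\Delta < 4a^{2}b^{2}$, where the strictness is exactly the obtuse hypothesis $a^{2}+b^{2}\neq c^{2}$ (in fact $a^{2}+b^{2}<c^{2}$). Dividing by $a^{2}$ and taking square roots yields $h(a,b,c)=\sqrt{\Delta}/a<2b$.

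For part (2), I would first observe that $\varepsilon<h(a,b,c)<2b\le 2c$ by part (1), so both $b^{2}-(\varepsilon/2)^{2}$ and $c^{2}-(\varepsilon/2)^{2}$ are strictly positive and the target inequality can safely be squared and cleared of denominators. Writing $u:=b^{2}+c^{2}-a^{2}$ and substituting $\Delta = 4b^{2}c^{2}-u^{2}$ from the second identity, a direct expansion collapses the inequality to
\[
\bigl(u-\tfrac{\varepsilon^{2}}{2}\bigr)^{2}>0,
\]
so the whole task reduces to verifying $u\neq\varepsilon^{2}/2$ throughout $0<\varepsilon<h(a,b,c)$.

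I would close the argument with the stronger claim $h(a,b,c)^{2}<2u$, which gives $\varepsilon^{2}<h(a,b,c)^{2}<2u$ with room to spare. Using $\Delta=4b^{2}c^{2}-u^{2}$ and $u^{2}+2a^{2}u=(b^{2}+c^{2})^{2}-a^{4}$, this rearranges to $(b^{2}-c^{2})^{2}>a^{4}$, i.e.\ $(b^{2}-c^{2}-a^{2})(b^{2}-c^{2}+a^{2})>0$; both factors are negative, the first by $b\le c<\sqrt{a^{2}+c^{2}}$ and the second by the obtuse hypothesis $a^{2}+b^{2}<c^{2}$, so their product is positive. No step presents a genuine obstacle; the only subtlety is that strictness in both (1) and (2) traces back to the \emph{strict} obtuseness $a^{2}+b^{2}<c^{2}$, which is used crucially in part (1) to exclude the degenerate equality and again in part (2) to sign one of the two factors.
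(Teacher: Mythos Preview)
Your proof is correct and follows essentially the same route as the paper: both reduce part~(2) to the perfect square $(b^{2}+c^{2}-a^{2}-\varepsilon^{2}/2)^{2}>0$ and then rule out equality by showing $\varepsilon^{2}<2(b^{2}+c^{2}-a^{2})$. The only cosmetic difference is in this last step: the paper chains $\varepsilon^{2}<h(a,b,c)^{2}<4b^{2}\le 2(b^{2}+c^{2}-a^{2})$ (the final inequality coming directly from $c^{2}-a^{2}>b^{2}$), whereas you prove $h(a,b,c)^{2}<2(b^{2}+c^{2}-a^{2})$ via the factorization $(b^{2}-c^{2})^{2}>a^{4}$; the paper's chain is a touch shorter, but both are straightforward.
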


\begin{proof}[Proof of~\cref{prop:KeyInequality}]
Since \(\mathcal{T}\) is obtuse, \(a^{2}+b^{2}<c^{2}\). Using
\[
4a^{2}b^{2}-\Delta=(a^{2}+b^{2}-c^{2})^{2}>0,
\]
we obtain \(\sqrt{\Delta}<2ab\), hence \(h(a,b,c)=\frac{\sqrt{\Delta}}{a}<2b\), proving \textup{(1)}. In particular, if \(0<\varepsilon<h(a,b,c)\), then
\(b^{2}-(\varepsilon/2)^{2}>0\) and \(\Delta-a^{2}\varepsilon^{2}>0\), so \(\ell_{a,b,c}(\varepsilon)\) is well-defined.

For \textup{(2)}, note first that from \textup{(1)} we have \(h(a,b,c)<2b\), so \(0<\varepsilon<h(a,b,c)\) implies \(b^{2}-(\varepsilon/2)^{2}>0\). Since
\[
\ell_{a,b,c}(\varepsilon)
= \sqrt{\frac{\Delta-a^{2}\varepsilon^{2}}{\,b^{2}-(\varepsilon/2)^{2}\,}},
\]
squaring both sides of the desired inequality (which is allowed as both sides are positive) is equivalent to
\[
\frac{\Delta-a^{2}\varepsilon^{2}}{\,b^{2}-(\varepsilon/2)^{2}\,}
\;<\;4\Bigl(c^{2}-\frac{\varepsilon^{2}}{4}\Bigr)
\;=\;4c^{2}-\varepsilon^{2}.
\]
Rearranging gives
\[
0<4b^{2}c^{2}-\Delta+(a^{2}-b^{2}-c^{2})\varepsilon^{2}+\frac{\varepsilon^{4}}{4}
=(b^{2}+c^{2}-a^{2})^{2}-(b^{2}+c^{2}-a^{2})\varepsilon^{2}+\frac{\varepsilon^{4}}{4}
=\Bigl(b^{2}+c^{2}-a^{2}-\frac{\varepsilon^{2}}{2}\Bigr)^{2}.
\]
This is strictly positive whenever \(\varepsilon^{2}\ne 2(b^{2}+c^{2}-a^{2})\). Noticing that $c^2-a^2 \geq b^2$, our assumption
\(0<\varepsilon<h(a,b,c)\) implies
\[
\varepsilon^{2}
<h(a,b,c)^{2}
=\frac{\Delta}{a^{2}}
<(2b)^{2}
=4b^{2}
\le 2(b^{2}+c^{2}-a^{2}),
\]
so the inequality is strict, completing the proof of \textup{(2)}.
\end{proof}

We will also use the following result from~\cite[Proposition~2.2]{2025DCGChengXu}.
\begin{prop}[{\cite{2025DCGChengXu}}]\label{prop:TwoPoint}
Let \(n\ge 2\) and \(r\) be positive integers. For any configuration \(\mathcal{X}\subseteq \mathbb{E}^n\) and any two-point set \(\mathcal{P}\subseteq \mathbb{E}^n\), we have
\(
   \mathbb{E}^n \overset{r}{\rightarrow} (\mathcal{X};\mathcal{P})_{\textup{GR}}.
\)
\end{prop}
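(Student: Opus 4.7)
The plan is to reduce the statement to the connectivity of the \emph{distance-\(d\) graph} on \(\mathbb{E}^n\), where \(d:=|p_1p_2|\) with \(\mathcal{P}=\{p_1,p_2\}\). I would argue by contradiction: suppose some \(r\)-coloring \(\chi\colon\mathbb{E}^n\to[r]\) contains neither a monochromatic copy of \(\mathcal{X}\) nor a rainbow copy of \(\mathcal{P}\). The absence of a rainbow \(\mathcal{P}\) is precisely the statement that every pair of points at Euclidean distance \(d\) receives the same color. If I can verify that the graph \(G_d\) whose vertex set is \(\mathbb{E}^n\) and whose edges are pairs at distance exactly \(d\) is connected, then \(\chi\) is globally constant; any congruent copy of \(\mathcal{X}\) is then monochromatic, contradicting the assumption.

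The first key step is a two-point bridging observation. For any \(u,v\in\mathbb{E}^n\) with \(|uv|\le 2d\), the spheres of radius \(d\) centered at \(u\) and \(v\) meet nontrivially: when \(n\ge 2\) and \(|uv|<2d\), this intersection is an \((n-2)\)-sphere of positive radius lying on the perpendicular bisector of \(\overline{uv}\), while for \(|uv|=2d\) it is the single midpoint. Either way, any such \(w\) yields a two-step walk \(u\text{--}w\text{--}v\) in \(G_d\), witnessing that \(u\) and \(v\) share a color under \(\chi\).

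The second step extends bounded bridging to arbitrary distances. For general \(u,v\in\mathbb{E}^n\), subdivide the segment \(\overline{uv}\) by consecutive points \(u=w_0,w_1,\dots,w_k=v\) with each \(|w_{i-1}w_i|\le 2d\), apply the bridging observation to each adjacent pair, and concatenate to obtain a finite walk from \(u\) to \(v\) in \(G_d\). Thus \(G_d\) is connected whenever \(n\ge 2\), and the contradiction described above closes the argument.

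The \emph{main obstacle}, modest though it is, lies entirely in the sphere-intersection claim of the bridging step: this is precisely where the dimension hypothesis \(n\ge 2\) enters, since on \(\mathbb{E}^1\) the distance-\(d\) graph decomposes into disjoint lattice translates and the conclusion genuinely fails. Once this elementary geometric input is secured, the deduction that \(\chi\) is constant and hence every copy of \(\mathcal{X}\) is monochromatic is formal.
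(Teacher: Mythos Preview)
Your argument is correct. Note, however, that the paper does not supply its own proof of this proposition: it is quoted directly from \cite[Proposition~2.2]{2025DCGChengXu} and invoked as a black box. The connectivity-of-the-distance-\(d\)-graph argument you give is the standard proof and is essentially what appears in the cited source.
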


\subsection{Overview of the proof of~\cref{thm:triangle}}
Although the outline of the proof is clear, we simply sketch the proof. We argue by contradiction and may assume the triangle is obtuse and the number of colors is at least three. The quantitative parameters \(h(a,b,c)\) and \(\ell_{a,b,c}(\varepsilon)\) are introduced to control a small base edge \(AB\) and the induced geometry on the sphere of points at distance \(c\) from both \(A\) and \(B\). Proposition~\ref{prop:TwoPoint} guarantees the existence of two differently colored points \(A,B\) at any prescribed small distance \(\varepsilon\). Proposition~\ref{prop:KeyInequality} ensures that for such \(\varepsilon\) the auxiliary chord length \(\ell_{a,b,c}(\varepsilon)\) is strictly below the relevant diameter, so all constructions below are valid.

Lemma~\ref{lem:5-point structure} is the rigidity step: if \(A\) and \(B\) have different colors, then for any two points \(P,M\) on the equal–distance sphere (centered at the midpoint of \(AB\)) with \(|PM|=\ell_{a,b,c}(\varepsilon)\), the colors of \(P\) and \(M\) must coincide. Proposition~\ref{lem: polygonal chain} supplies a polygonal chain on a sphere with all edges of the same prescribed length; chaining Lemma~\ref{lem:5-point structure} along such a path gives Lemma~\ref{cor:monochromatic sphere}: the entire equal distance sphere is monochromatic.

We then split by the circumradius of the fixed triangle. If it is smaller than \(c\), the monochromatic sphere contains a circle of the right radius and hence a monochromatic copy of the triangle. Otherwise, a rotation argument around a monochromatic sphere produces a concentric non-monochromatic sphere; a minimal-radius red shell yields a close red–nonred pair \(P,Q\) with controlled separation. Projecting to their perpendicular bisector plane furnishes points on the two spheres that are equidistant from \(P\) and \(Q\). Applying Lemma~\ref{cor:monochromatic sphere} to \(P,Q\) forces these two points to share a color, contradicting the setup. Thus the theorem holds.

\subsection{Two important lemmas}
\begin{lemma}\label{lem:5-point structure}
Let \(0<\varepsilon<h(a,b,c)\), and let \(A,B,P,M\) be four distinct points such that
\[
|AB|=\varepsilon,\quad |AM| = |BM| = |AP| = |BP| = c,\quad
|PM|=\ell_{a,b,c}(\varepsilon)<2\sqrt{\,c^2-(\varepsilon/2)^2\,}.
\]
Let \(r\ge 3\) and let \(\chi \colon \mathbb{E}^4 \to [r]\) be an \(r\)-coloring containing neither a monochromatic copy nor a rainbow copy of \(\mathcal{T}\). If \(\chi(A) \neq \chi(B)\), then \(\chi(M) = \chi(P)\).
\end{lemma}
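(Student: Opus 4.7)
The plan is to introduce a single auxiliary point $X\in\mathbb{E}^4$ so that each of the four triangles $\triangle XAM$, $\triangle XAP$, $\triangle XBM$, $\triangle XBP$ is congruent to $\mathcal{T}$; the ``neither monochromatic nor rainbow'' constraint on these four copies will then, together with $\chi(A)\neq\chi(B)$, force $\chi(M)=\chi(P)$.

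For the construction I would place $A=(-\varepsilon/2,0,0,0)$ and $B=(\varepsilon/2,0,0,0)$, so that the equal-distance locus $\{y:|yA|=|yB|=c\}$ is the $2$-sphere of radius $R=\sqrt{c^2-\varepsilon^2/4}$ inside $\{x_1=0\}$. Rotating within this hyperplane, I may write $M=(0,R,0,0)$ and $P=(0,R\cos\theta,R\sin\theta,0)$ with $\sin(\theta/2)=\ell_{a,b,c}(\varepsilon)/(2R)$, which is well-defined by \cref{prop:KeyInequality}. I then look for $X=(0,x_2,x_3,x_4)$ satisfying $|XA|=|XB|=b$ and $|XM|=|XP|=a$. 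The first pair forces $x_1=0$ and $x_2^2+x_3^2+x_4^2=b^2-\varepsilon^2/4$; the equation $|XM|=a$ then linearizes to
\[
x_2=\frac{b^2+c^2-a^2-\varepsilon^2/2}{2R},
\]
and subtracting $|XM|^2=|XP|^2=a^2$ gives $x_3=x_2\tan(\theta/2)$. Combining the algebraic identity $4b^2c^2-\Delta=(b^2+c^2-a^2)^2$ with the definition of $\ell_{a,b,c}(\varepsilon)$ produces
\[
4R^2-\ell_{a,b,c}(\varepsilon)^2=\frac{(b^2+c^2-a^2-\varepsilon^2/2)^2}{b^2-\varepsilon^2/4},
\]
and after substitution this collapses to $x_2^2+x_3^2=b^2-\varepsilon^2/4$, so $x_4=0$. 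The bound $\varepsilon<h(a,b,c)$, via \cref{prop:KeyInequality}, keeps every quantity real and positive, and a direct check rules out $X\in\{A,B,M,P\}$.

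Each of the four triangles listed has side multiset $\{a,b,c\}$, hence is a copy of $\mathcal{T}$. Setting $\chi(A)=1$, $\chi(B)=2$, $\chi(X)=x$, $\chi(M)=m$, $\chi(P)=p$, I assume for contradiction that $m\neq p$. If $x=1$, then $\triangle XBM$ with colors $1,2,m$ rules out rainbow so $m\in\{1,2\}$, while $\triangle XAM$ with colors $1,1,m$ rules out monochromatic so $m\neq 1$; thus $m=2$, and the identical argument applied to $P$ gives $p=2$, contradicting $m\neq p$. The case $x=2$ is symmetric and forces $m=p=1$. If $x\notin\{1,2\}$, the triangles $\triangle XAM$ and $\triangle XBM$ each avoid both failure modes only when $m\in\{x,1\}$ and $m\in\{x,2\}$, whose intersection is $\{x\}$; so $m=x$, and likewise $p=x$, again contradicting $m\neq p$.

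The main obstacle is the existence step for $X$: the identity collapsing $x_4$ to $0$ is precisely what the definition of $\ell_{a,b,c}(\varepsilon)$ is engineered to deliver, ensuring the candidate point that simultaneously realizes sides $a,b,c$ on the pairs $(A,M),(A,P),(B,M),(B,P)$ actually closes up inside the affine span of $\{A,B,M,P\}$, rather than demanding extra dimensions or failing to exist. Once $X$ is produced, the color argument is a short enumeration relying only on the local ``exactly two colors'' structure on each of the four copies of $\mathcal{T}$.
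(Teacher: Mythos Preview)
Your proof is correct and follows essentially the same approach as the paper: construct a fifth point (your $X$, the paper's $N$) with $|XA|=|XB|=b$ and $|XM|=|XP|=a$, so that the four triangles $XAM,XAP,XBM,XBP$ are copies of $\mathcal{T}$, and then run a short color case analysis. The only differences are cosmetic—the paper establishes existence of $N$ via a synthetic midpoint argument in $\operatorname{aff}\{A,B,P,M\}$ rather than your coordinate computation, and organizes the color cases by $\chi(P)$ rather than by $\chi(X)$—but the content is the same.
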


\begin{proof}[Proof of~\cref{lem:5-point structure}]
All points below lie in the \(3\)-dimensional affine subspace \(\textup{aff}\{A,B,P,M\}\subseteq\mathbb{E}^4\). The key ingredient is to find the fifth point with desired properties.

\begin{claim}\label{prop:existence5points}
There exists a point \(N\) such that
\[
|NA|=|NB|=b,\qquad |NP|=|NM|=a.
\]
\end{claim}

\begin{poc}
    Let \(K\) and \(K'\) be the midpoints of \(AB\) and \(PM\), respectively. Since \(|PA|=|PB|=|MA|=|MB|=c\), both \(P\) and \(M\) lie on the perpendicular bisector of \(AB\); hence \(PK\perp AB\) and \(MK\perp AB\). In particular,
\[
|PK|=|MK|=\sqrt{\,c^2-(\varepsilon/2)^2\,}=:c'.
\]
Let \(b':=\sqrt{\,b^2-(\varepsilon/2)^2\,}\). Consider the auxiliary triangle \(\mathcal{T}'\) with side lengths \((a,b',c')\). Since \(0<\varepsilon<h(a,b,c)<2b<2c\), we have \(b',c'>0\). A direct Heron computation gives
\[
16\,\textup{Area}(\mathcal{T}')^2=\Delta-a^2\varepsilon^2>0,
\]
so this triangle is nondegenerate. Moreover, from \(c^2>a^2+b^2\) it follows that
\[
(c')^2=c^2-(\varepsilon/2)^2>a^2+b^2-(\varepsilon/2)^2=a^2+(b')^2,
\]
hence it is obtuse with the obtuse angle opposite \(c'\). Let
\[
h:=\frac{2\,\textup{Area}(\mathcal{T}')}{b'}=\frac12\sqrt{\frac{\Delta-a^2\varepsilon^2}{\,b^2-(\varepsilon/2)^2\,}}
=\frac{\ell_{a,b,c}(\varepsilon)}{2}.
\]
In the circle (in the plane perpendicular to \(AB\)) of center \(K\) and radius \(c'\) passing through \(P\) and \(M\), the chord \(PM\) has length \(\ell_{a,b,c}(\varepsilon)\); thus the line \(KK'\) is perpendicular to \(PM\) and
\[
|KK'|=\sqrt{\,{c'}^{2}-\bigl(\ell_{a,b,c}(\varepsilon)/2\bigr)^{2}\,}
=\sqrt{\,{c'}^{2}-h^{2}\,}.
\]
Put \(\mathcal{L}:=KK'\). The distance from \(P\) to the line segment \(\mathcal{L}\) equals \(|PK'|=\frac{\ell_{a,b,c}(\varepsilon)}{2}=h\), since \(PK'\perp \mathcal{L}\).
\begin{figure}[h]
\centering
\begin{tikzpicture}[scale=1.5]
    \coordinate (A) at (0,0,1);
    \coordinate (K) at (0.2,0,0);
    \coordinate (B) at (0.4,0,-1);
    \coordinate (N) at (0,1.5,0);
    \coordinate (P) at (-2,2,0);
    \coordinate (M) at (2,2,0);
    \coordinate (K') at (-0.1,2,0);

    \foreach \point/\position in {A/below,B/below,N/above right,P/left,M/right,K/below, K'/above} {
        \fill (\point) circle (0pt);
        \node[\position=2pt] at (\point) {$\point$};
    }

     \draw (A) -- (B);
     \draw (A) -- (M);
     \draw (B) -- (M);
     \draw (A) -- (P);
     \draw[dashed] (B) -- (P);
     \draw (A) -- (N);
     \draw[dashed] (B) -- (N);
     \draw (N) -- (M);
     \draw (N) -- (P);
     \draw (P) -- (M);
     \draw[dashed] (K') -- (N);
     \draw[dashed] (N) -- (K);
     \draw[dashed] (P) -- (K);
\end{tikzpicture}
     \caption{The fifth point \(N\)}\label{fig:the fifth point}
\end{figure}
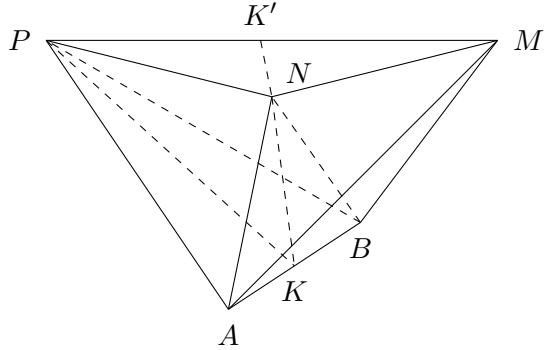

Now choose a point \(N\) on the line segment \(\mathcal{L}\) so that \(|NK|=b'\) and \(N\) lies on the same side of \(K\) as \(K'\), see~\cref{fig:the fifth point}. Writing \(x:=|KK'|\), by right-triangle decomposition with respect to \(\mathcal{L}\) we have
\[
|PK|^2 = x^2+h^2={c'}^2,
\qquad
|PN|^2 = (b'-x)^2+h^2.
\]
In the auxiliary triangle \(\mathcal{T}'\), the foot of the altitude of length \(h\) onto the side \(b'\) splits it into segments of lengths
\[
x=\frac{b'^2+c'^2-a^2}{2b'}\quad\text{and}\quad b'-x,
\]
and \(h^2=c'^2-x^2\). Since \(h=\frac{\ell_{a,b,c}(\varepsilon)}{2}\), we indeed have \(x=\sqrt{{c'}^2-h^2}=|KK'|\). Consequently,
\[
|PN|^2=(b'-x)^2+h^2=a^2.
\]
Finally, because \(N\in \mathcal{L}\) (\(\mathcal{L}\) is the perpendicular bisector of \(AB\)), we have
\[
|NA|^2=|NK|^2+|KA|^2=b'^2+(\varepsilon/2)^2=b^2,
\]
and likewise \(|NB|=b\). This proves the claim.
\end{poc}

Fix the point \(N\) given by \cref{prop:existence5points}. Without loss of generality, assume \(\chi(A)=1\) and \(\chi(B)=2\). Note that, by construction, 
\(\triangle NPA\cong\triangle NPB\cong\triangle NMA\cong\triangle NMB\cong\mathcal{T}\). We distinguish two cases.

\paragraph{Case 1:} \(\chi(P) \notin \{1,2\}\).  
Since neither \(\triangle NPA\) nor \(\triangle NPB\) is rainbow, it follows that \(\chi(N) = \chi(P)\).  
Similarly, the fact that neither \(\triangle NMA\) nor \(\triangle NMB\) is rainbow yields \(\chi(M) = \chi(N) = \chi(P)\).

\paragraph{Case 2:} \(\chi(P) \in \{1,2\}\).  
By symmetry, it suffices to consider \(\chi(P) = 1\) (the case \(\chi(P) = 2\) is analogous).  
Since \(\triangle NPA\) is not monochromatic, we have \(\chi(N) \neq 1\).  
Moreover, \(\triangle NPB\) not being rainbow forces \(\chi(N) = 2\).  
Finally, from \(\triangle NMA\) and \(\triangle NMB\) we deduce \(\chi(M) = 1 = \chi(P)\).

In both cases, \(\chi(M) = \chi(P)\). This finishes the proof.

\begin{figure}[htbp]
\centering
\begin{minipage}{0.45\textwidth}
\centering
\begin{tikzpicture}[scale=1.5]   
    \coordinate (A) at (0,0,1);
    \coordinate (B) at (0.4,0,-1);
    \coordinate (N) at (0,1.5,0);
    \coordinate (P) at (-2,2,0);
    \coordinate (M) at (2,2,0);

    \filldraw[red] (A) circle (0pt) node[below] {$A$};
    \filldraw[blue] (B) circle (0pt) node[below] {$B$};
    \filldraw[green] (P) circle (0pt) node[left] {$P$};
    \filldraw[green] (N) circle (0pt) node[above] {$N$};
    \filldraw[green] (M) circle (0pt) node[right] {$M$};

     \draw (A) -- (B) node[midway,below]{$\varepsilon$} ;
     \draw (A) -- (M) node[midway,below]{$c$};
     \draw (B) -- (M) node[midway,below right]{$c$};
     \draw (A) -- (P) node[midway,below left]{$c$};
     \draw[dashed] (B) -- (P) node[midway,above right]{$c$};
     \draw (A) -- (N) node[midway,above left]{$b$};
     \draw[dashed] (B) -- (N) node[midway,above right]{$b$};
     \draw (N) -- (M) node[midway,above]{$a$};
     \draw (N) -- (P) node[midway,above]{$a$};
     \draw (P) -- (M) node[midway,above]{$\ell_{a,b,c}(\varepsilon)$};

     \fill[brown!30, opacity=0.6] (-2,2,0) -- (0,0,1) -- (0,1.5,0) -- cycle;
     \fill[yellow!30, opacity=0.6] (-2,2,0) -- (0.4,0,-1) -- (0,1.5,0) -- cycle;
\end{tikzpicture}
\caption{Case 1}
\end{minipage}
\hfill
\begin{minipage}{0.45\textwidth}
\centering
\begin{tikzpicture}[scale=1.5]   
    \coordinate (A) at (0,0,1);
    \coordinate (B) at (0.4,0,-1);
    \coordinate (N) at (0,1.5,0);
    \coordinate (P) at (-2,2,0);
    \coordinate (M) at (2,2,0);

    \filldraw[red] (A) circle (0pt) node[below] {$A$};
    \filldraw[blue] (B) circle (0pt) node[below] {$B$};
    \filldraw[red] (P) circle (0pt) node[left] {$P$};
    \filldraw[blue] (N) circle (0pt) node[above] {$N$};
    \filldraw[red] (M) circle (0pt) node[right] {$M$};

     \draw (A) -- (B) node[midway,below]{$\varepsilon$} ;
     \draw (A) -- (M) node[midway,below]{$c$};
     \draw (B) -- (M) node[midway,below right]{$c$};
     \draw (A) -- (P) node[midway,below left]{$c$};
     \draw[dashed] (B) -- (P) node[midway,above right]{$c$};
     \draw (A) -- (N) node[midway,above left]{$b$};
     \draw[dashed] (B) -- (N) node[midway,above right]{$b$};
     \draw (N) -- (M) node[midway,above]{$a$};
     \draw (N) -- (P) node[midway,above]{$a$};
     \draw (P) -- (M) node[midway,above]{$\ell_{a,b,c}(\varepsilon)$};
     \fill[brown!30, opacity=0.6] (2,2,0) -- (0,0,1) -- (0,1.5,0) -- cycle;
     \fill[yellow!30, opacity=0.6] (2,2,0) -- (0.4,0,-1) -- (0,1.5,0) -- cycle;
\end{tikzpicture}
\caption{Case 2}
\end{minipage}
\caption{$\triangle PNA \cong\triangle PNB \cong\triangle MNA \cong\triangle MNB\cong\mathcal{T}$, forcing $\chi(P)=\chi(M)$ whenever $\chi(A)\neq \chi(B)$.}
\label{fig:2Cases}
\end{figure}
\end{proof}

The preceding lemma implies the existence of a relatively large monochromatic sphere.

\begin{lemma}\label{cor:monochromatic sphere}
Let \(r\ge 3\) be an integer. Let \(\chi \colon \mathbb{E}^4 \to [r]\) be an \(r\)-coloring that contains neither a monochromatic nor a rainbow copy of the fixed obtuse triangle \(\mathcal{T}\) with side lengths \(a\le b<c\).
Suppose \(A,B \in \mathbb{E}^4\) satisfy \(\chi(A) \neq \chi(B)\) and \(|AB| = \varepsilon\) with
\[
\ell_{a,b,c}(\varepsilon)\;<\;2\sqrt{\,c^2-(\varepsilon/2)^2\,}.
\]
Let \(\Gamma\) be the \(2\)-sphere centered at the midpoint of \(AB\) consisting of all points \(Z\) with \(|ZA| = |ZB| = c\).
Then \(\Gamma\) is monochromatic under \(\chi\).
\end{lemma}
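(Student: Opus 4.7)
The strategy is a two-step argument: first promote Lemma~\ref{lem:5-point structure} to the statement that any two points of \(\Gamma\) at distance \(\ell:=\ell_{a,b,c}(\varepsilon)\) receive the same color, and then bootstrap this via polygonal chaining on the 2-sphere \(\Gamma\) to force a single color on all of \(\Gamma\). The inputs are already in place: \(\Gamma\) is a round 2-sphere of radius \(\sqrt{c^2-(\varepsilon/2)^2}\), hence of diameter \(2\sqrt{c^2-(\varepsilon/2)^2}\), and the standing hypothesis gives \(0<\ell<\operatorname{diam}(\Gamma)\).

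First I would check that Lemma~\ref{lem:5-point structure} applies verbatim to any pair \(P,M\in\Gamma\) with \(|PM|=\ell\). Since every \(Z\in\Gamma\) satisfies \(|ZA|=|ZB|=c>0\), we have \(A,B\notin\Gamma\); hence \(\{A,B,P,M\}\) is a 4-element set in \(\mathbb{E}^4\) meeting all the distance constraints of Lemma~\ref{lem:5-point structure}. Combined with \(\chi(A)\ne\chi(B)\), the lemma yields \(\chi(P)=\chi(M)\). Thus, on the vertex set \(\Gamma\), the ``distance-\(\ell\)'' relation is a \(\chi\)-color-preserving relation.

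Next I would invoke Proposition~\ref{lem: polygonal chain}, which provides, for arbitrary \(P,Q\in\Gamma\), a finite sequence \(P=P_0,P_1,\dots,P_k=Q\) lying entirely in \(\Gamma\) with \(|P_{i-1}P_i|=\ell\) for all \(i\). Its existence rests solely on the facts that \(\ell\) is strictly below the diameter of \(\Gamma\) and that \(\Gamma\) is at least 2-dimensional, so the ``distance-\(\ell\)'' graph on \(\Gamma\) is connected. Iterating the first step along the chain gives \(\chi(P)=\chi(P_1)=\cdots=\chi(P_k)=\chi(Q)\), and since \(P,Q\in\Gamma\) were arbitrary, \(\Gamma\) is monochromatic under \(\chi\), as desired.

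The main obstacle is really contained in Lemma~\ref{lem:5-point structure}, which has already been established; the present step merely combines it with the sphere-connectivity fact of Proposition~\ref{lem: polygonal chain}. The only items requiring explicit verification are the disjointness of \(\{A,B\}\) from \(\Gamma\) and the strict inequality \(\ell<\operatorname{diam}(\Gamma)\); both follow immediately from the hypotheses, so no additional geometric work is anticipated.
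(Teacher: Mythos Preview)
Your proposal is correct and follows essentially the same approach as the paper: apply Lemma~\ref{lem:5-point structure} to conclude that any two points of \(\Gamma\) at distance \(\ell_{a,b,c}(\varepsilon)\) share a color, then use Proposition~\ref{lem: polygonal chain} to connect any two points of \(\Gamma\) by a polygonal chain of step length \(\ell_{a,b,c}(\varepsilon)\). The only cosmetic difference is that in the paper Proposition~\ref{lem: polygonal chain} is stated and proved inside the proof of this lemma rather than beforehand, but the mathematical content is identical.
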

\begin{proof}[Proof of~\cref{cor:monochromatic sphere}]
The radius of \(\Gamma\) is \(\sqrt{\,c^{2}-(\varepsilon/2)^{2}\,}\). We first need an auxiliary chain lemma on spheres, which is not only useful in \(4\)-dimensional space.

\begin{prop}\label{lem: polygonal chain}
Let \(N\ge 2\) be an integer, \(s>0\), and let \(\Gamma_N(s)\) be an \(N\)-sphere of radius \(s\).
Given distinct \(U,V\in \Gamma_N(s)\) and any \(d\in(0,2s)\), there exist \(k\in\mathbb{N}\) and points
\(X_1,\ldots,X_k\in \Gamma_N(s)\) such that
\[
|UX_1|=|X_1X_2|=\cdots=|X_{k-1}X_k|=|X_kV|=d.
\]
\end{prop}

\begin{proof}[Proof of~\cref{lem: polygonal chain}]
    When \(U,V\) are not antipodal, that is, $|UV|<2s$, we can first pick some $p$ such that $s>p>\max\{\frac{d}{2}, \frac{|UV|}{2}\}$. Choose integer $k$ large enough so that \[
        (k+1)>\frac{|UV|}{d}\sqrt{\frac{1-(d/2p)^2}{1-(|UV|/2p)^2}},
    \]
    and
    \[  
    \left(2(k+1)\arcsin{\frac{d}{2p}}-2\arcsin{\frac{|UV|}{2p}}\right)-\left(2(k+1)\arcsin{\frac{d}{2s}}-2\arcsin{\frac{|UV|}{2s}}\right) > 2\pi.
    \]
   Consider a map $f:[p,s] \to \mathbb{R}$ by \[f(x)=2(k+1)\arcsin{\frac{d}{2x}}-2\arcsin{\frac{|UV|}{2x}}.\]
   Notice that 
    \[\frac{\partial f}{\partial x}=-\frac{1}{x^2}\left((k+1)\frac{d}{\sqrt{1-(d/2x)^2}}-\frac{|UV|}{\sqrt{1-(|UV|/2x)^2}}\right)<0.\] 
    Hence the range of $f$ is $[f(s),f(p)]$ and $f(p)-f(s)>2\pi$. By intermediate value theorem, there exists some $s'\in [p,s]$ such that $f(s')$ is a multiple of $2\pi$.
    
    Fix such \(s'\). Because \(|UV|<2s'\), there is a circle \(\Gamma_{1}(s')\) of radius \(s'\) on \(\Gamma_N(s)\) that contains \(U\) and \(V\) (the intersection with a suitable affine \(2\)-plane). Let \(\mathbb{P}\cong\mathbb{E}^2\) be the plane containing \(\Gamma_{1}(s')\) and its center \(O\). Endow \(\mathbb{P}\) with polar coordinates taking \(O\) as the pole and \(\overrightarrow{OU}\) as polar axis. For each \(i\in[k]\), let \(X_i\) be the point on \(\Gamma_{1}(s')\) with polar angle \(2i\arcsin(\tfrac{d}{2s'})\). Then
    \[
      |UX_1|=|X_1X_2|=\cdots=|X_{k-1}X_k|=|X_kV|=d,
    \]
    where the last equality uses \(f(s')\in 2\pi\mathbb{Z}\).

   When \(U\) and \(V\) are antipodal, first choose \(U'\in \Gamma_N(s)\) with \(|UU'|=d\) and \(|U'V|<2s\), and apply the previous case to the pair \((U',V)\). This completes the proof.

\end{proof}

   Let \(U\) and \(V\) be two arbitrary points on \(\Gamma\). Since \(\Gamma\) is a \(2\)-sphere of radius \(\sqrt{c^2-(\varepsilon/2)^2}\), we apply \cref{lem: polygonal chain} to find a sequence of vertices  \(X_1, X_2,\dots, X_k\) also on the sphere such that 
    \[
    |UX_1|=|X_1X_2|=|X_2X_3|=\cdots=|X_{k-1}X_k|=|X_kV|=\ell_{a,b,c}(\varepsilon).
    \] 
     Notice that for any distinct points \(Z_{1},Z_{2}\) on the sphere \(\Gamma\) with \(|Z_{1}Z_{2}|=\ell_{a,b,c}(\varepsilon)< 2\sqrt{c^2-(\varepsilon/2)^2},\) the tetrahedron \(Z_{1}Z_{2}AB\) is congruent to the tetrahedron \(PMAB\) in \cref{fig:2Cases}. Then by~\cref{lem:5-point structure}, we have \(\chi(Z_{1})=\chi(Z_{2}).\) 
     Therefore, we have \(\chi(U)=\chi(X_1)=\cdots=\chi(X_k)=\chi(V).\) Since \(U,V\) are chosen arbitrarily on \(\Gamma\), the sphere \(\Gamma\) is monochromatic. 
\end{proof}

\subsection{Completion of the proof of Theorem~\ref{thm:triangle}}
Assume for a contradiction that Theorem~\ref{thm:triangle} fails. Then there exists
an \(r\)-coloring \(\chi:\mathbb{E}^4\to[r]\) (with \(r\ge 3\)) that contains neither a monochromatic nor a rainbow copy of the fixed obtuse triangle \(\mathcal{T}\) with side lengths \(a\le b<c\).

Let \(\gamma\) be the largest angle of \(\mathcal{T}\), so that \(\operatorname{rad}(\mathcal{T})=\frac{c}{2\sin\gamma}\).

\smallskip
\noindent\textbf{Case A: \(\gamma<\tfrac{5\pi}{6}\).}
Then \(\sin\gamma>\sin(\tfrac{5\pi}{6})=\tfrac12\), hence \(\operatorname{rad}(\mathcal{T})<c\).
Choose \(\varepsilon\in\bigl(0,h(a,b,c)\bigr)\) such that
\[
(\varepsilon/2)^2 \;<\; c^2-\operatorname{rad}(\mathcal{T})^2 .
\]
By Proposition~\ref{prop:TwoPoint} there exist two points \(A,B\) of different colors with \(|AB|=\varepsilon\).
Applying \cref{cor:monochromatic sphere} to \(A,B\) (note that \(\ell_{a,b,c}(\varepsilon)\) is defined and \(\ell_{a,b,c}(\varepsilon)<2\sqrt{\,c^2-(\varepsilon/2)^2\,}\) by Proposition~\ref{prop:KeyInequality}\textup{(2)}), we obtain a monochromatic sphere
\[
\mathcal{S}=\{Z:\ |ZA|=|ZB|=c\}
\]
centered at the midpoint \(O\) of \(AB\), with \(\operatorname{rad}(\mathcal{S})=\sqrt{\,c^2-(\varepsilon/2)^2\,}\).
By the choice of \(\varepsilon\) we have \(\operatorname{rad}(\mathcal{S})>\operatorname{rad}(\mathcal{T})\).
Since any \(2\)-sphere of radius \(R\) contains circles of every radius \(\rho\in(0,R]\), we may take a circle \(\mathcal{C}\subseteq\mathcal{S}\) of radius \(\operatorname{rad}(\mathcal{T})\) and inscribe on \(\mathcal{C}\) a congruent copy of \(\mathcal{T}\), yielding a monochromatic copy of \(\mathcal{T}\), a contradiction.

\smallskip
\noindent\textbf{Case B: \(\gamma\ge\tfrac{5\pi}{6}\).}
Then \(\operatorname{rad}(\mathcal{T})\ge c\). Pick an arbitrary \(\varepsilon\) with
\[
\varepsilon\in\biggl(0,\min\Bigl\{h(a,b,c),\tfrac{c}{2}\Bigr\}\biggr),
\]
and take \(A,B\) with \(|AB|=\varepsilon\) and \(\chi(A)\ne\chi(B)\) by~\cref{prop:TwoPoint}. 
By \cref{cor:monochromatic sphere} we again obtain a monochromatic (say, red) \(2\)-sphere
\[
\mathcal{S}:=\{Z:\ |ZA|=|ZB|=c\}
\]
centered at the midpoint \(O\) of \(AB\), with \(\operatorname{rad}(\mathcal{S})=\sqrt{\,c^2-(\varepsilon/2)^2\,}<c\).
Let \(\mathbb{H}\cong\mathbb{E}^3\) be the \(3\)-dimensional space spanned by \(\mathcal{S}\).
Note that \(\operatorname{rad}(\mathcal{S})<c\le \operatorname{rad}(\mathcal{T})\).

Fix any chord \(CD\subseteq \mathcal{S}\) with \(|CD|=c\). In the plane \(\operatorname{aff}\{O,C,D\}\)
there exists a point \(Z\) (off \(\mathcal{S}\)) with \(|ZC|=a\), \(|ZD|=b\), and such that the segment \(OZ\) intersects the open segment \(CD\) (this is the standard construction of a triangle with prescribed side lengths on a circle of radius \(\operatorname{rad}(\mathcal{S})\); see~\cref{fig:non-red sphere}. In particular, \(|OZ|\) is uniquely determined by \(a,b,c\) and \(\varepsilon\)). Since \(C,D\) are red and \(\chi\) contains no monochromatic copy of \(\mathcal{T}\), the point \(Z\) cannot be red. By rotating the chord \(CD\) about \(O\) within \(\mathbb{H}\), we see that every point at the same distance \(|OZ|\) from \(O\) is also non-red (indeed, for each such point \(Z'\) one can choose a chord \(C'D'\subseteq \mathcal{S}\) with \(|C'D'|=c\) so that \(\triangle Z'C'D'\cong \mathcal{T}\) in the plane \(\operatorname{aff}\{O,Z'\}\)).
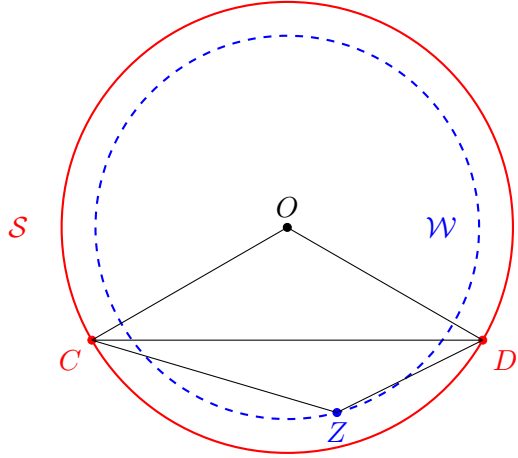
\begin{figure}[H]
\centering
 \begin{tikzpicture}[scale=1.5]
    
    \coordinate (O) at (0,0);
    \filldraw[black] (O) circle (1pt) node[above] {$O$};
    \draw[thick, red] (0,0) circle (2) node[left=33mm] {$\mathcal{S}$};
    
    \coordinate (C) at (210:2);
    \coordinate (D) at (330:2);
    \filldraw[red] (C) circle (1pt) node[below left] {$C$};
    \filldraw[red] (D) circle (1pt) node[below right] {$D$};

    \coordinate (Z) at (285:1.7);
    \filldraw[blue] (Z) circle (1pt) node[below] {$Z$};
    \draw[thick, blue, dashed] (0,0) circle (1.7) node[right=17mm] {$\mathcal{W}$};

    \draw (O) -- (C);
    \draw (O) -- (D);
    \draw (C) -- (Z);
    \draw (D) -- (Z);
    \draw (C) -- (D);
\end{tikzpicture}
\caption{Notice that $\chi(C)=\chi(D)=$red since \(\mathcal{S}\) is red, then $\chi(Z)\neq$ red for $\triangle ZCD \cong \mathcal{T}$, which implies that all points in the sphere $\mathcal{W}$ are not colored red.}
\label{fig:non-red sphere}
\end{figure}
Hence all points on the \(2\)-sphere
\[
\mathcal{W}:=\{X\in\mathbb{H}:\ |OX|=|OZ|\}
\]
are non-red. In particular,
\[
\operatorname{rad}(\mathcal{W})=|OZ|>d(O,CD)
=\sqrt{\,\operatorname{rad}(\mathcal{S})^2-(c/2)^2\,}
=\sqrt{\,\tfrac{3}{4}c^2-\tfrac{\varepsilon^2}{4}\,}.
\]
Indeed, since the segment \(OZ\) meets the chord \(CD\), the point of intersection realizes the minimal distance from \(O\) to \(CD\), namely \(d(O,CD)\), while \(Z\) lies further along the same line.

We thus have a red sphere \(\mathcal{S}\) and a non-red sphere \(\mathcal{W}\). Our next goal is to find another monochromatic sphere \(\mathcal{F}\) that intersects both \(\mathcal{S}\) and \(\mathcal{W}\), leading to a contradiction. Define
\[
\rho\ :=\ \inf\Bigl\{\,r\in[0,\operatorname{rad}(\mathcal{S})] \ \Big|\ \{X\in\mathbb{H}: r\le |OX|\le \operatorname{rad}(\mathcal{S})\}\ \text{is entirely red}\,\Bigr\}.
\]
From the existence of the non-red sphere \(\mathcal{W}\) we have \(\rho\ge \operatorname{rad}(\mathcal{W})>\sqrt{\,\tfrac{3}{4}c^2-\tfrac{\varepsilon^2}{4}}\,\).
Now choose \(\delta>0\) small enough so that
\begin{equation}\label{eq:delta1}
    \delta \;<\; \frac{c^2-\rho^2}{2\rho},
\end{equation}
\begin{equation}\label{eq:delta2}
    \delta \;<\; \frac{\operatorname{rad}(\mathcal{S})^2-\rho^2}{2\rho}
    \qquad\text{if }\ \rho<\operatorname{rad}(\mathcal{S}),
\end{equation}
and
\begin{equation}\label{eq:delta3}
    \delta \;<\; \frac{h(a,b,c)^2}{2\rho}.
\end{equation}

\begin{claim}\label{claim:PQ}
There exist distinct points \(P,Q\in\mathbb{H}\) such that
\[
\chi(P)=\textup{red},\quad \chi(Q)\ne\textup{red},\quad \rho-\delta \le |OQ| \le \rho,\quad OQ\perp QP,\quad |PQ|\le \sqrt{2\rho\delta}.
\]
\end{claim}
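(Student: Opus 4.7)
The plan is to extract $Q$ directly from the infimum definition of $\rho$, and then to locate $P$ inside the $2$-plane $H_Q \subseteq \mathbb{H}$ through $Q$ orthogonal to $OQ$ by a Pythagorean choice of the distance $|PQ|$. For any $P \in H_Q$, the perpendicularity $OQ \perp QP$ and the identity $|OP|^2 = |OQ|^2 + |PQ|^2$ come for free, so the two genuine tasks are to arrange $\rho-\delta \le |OQ| \le \rho$ with $Q$ non-red, and to place $|OP|$ inside the known red locus while keeping $|PQ| \le \sqrt{2\rho\delta}$.

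For $Q$, I would argue as follows. By the definition of $\rho$, every $r>\rho$ belongs to the set whose infimum is $\rho$, so each shell $\{r \le |OX| \le \operatorname{rad}(\mathcal{S})\}$ is entirely red; the union over $r>\rho$ then shows that every point with $\rho<|OX|\le \operatorname{rad}(\mathcal{S})$ is red. On the other hand, $\rho-\delta$ lies strictly below the infimum, so the shell $\{\rho-\delta \le |OX| \le \operatorname{rad}(\mathcal{S})\}$ is not entirely red and must contain a non-red point $Q$. Combining these two facts pins down $\rho-\delta \le |OQ| \le \rho$, and since $\mathcal{S}$ itself is red, $|OQ|<\operatorname{rad}(\mathcal{S})$. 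For $P$, I would set
\[
t := \min\bigl(\sqrt{2\rho\delta},\ \sqrt{\operatorname{rad}(\mathcal{S})^2-|OQ|^2}\bigr) > 0
\]
and pick any $P \in H_Q$ with $|PQ|=t$; the locus is a circle in $H_Q$, so $P$ exists and is distinct from $Q$. The bound $t^2 \le \operatorname{rad}(\mathcal{S})^2-|OQ|^2$ gives $|OP| \le \operatorname{rad}(\mathcal{S})$ automatically. If $t = \sqrt{2\rho\delta}$, then $|OP|^2 \ge (\rho-\delta)^2 + 2\rho\delta = \rho^2+\delta^2 > \rho^2$, placing $P$ in the red shell $(\rho,\operatorname{rad}(\mathcal{S})]$. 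If instead $t = \sqrt{\operatorname{rad}(\mathcal{S})^2-|OQ|^2}$, then $|OP|=\operatorname{rad}(\mathcal{S})$, so $P \in \mathcal{S}$ is red.

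The two alternatives exhaust everything: condition~\eqref{eq:delta2} on $\delta$ gives $\rho^2+2\rho\delta<\operatorname{rad}(\mathcal{S})^2$ when $\rho<\operatorname{rad}(\mathcal{S})$, making $\sqrt{2\rho\delta}$ the smaller branch, while when $\rho=\operatorname{rad}(\mathcal{S})$ the inequality $\operatorname{rad}(\mathcal{S})^2-|OQ|^2 \le 2\rho\delta-\delta^2 < 2\rho\delta$ makes $\sqrt{\operatorname{rad}(\mathcal{S})^2-|OQ|^2}$ the smaller branch. No genuine obstacle is anticipated: the entire claim is bookkeeping on the infimum definition of $\rho$ combined with Pythagoras, and the mild case split between $\rho<\operatorname{rad}(\mathcal{S})$ and $\rho=\operatorname{rad}(\mathcal{S})$ is the only place that even requires attention.
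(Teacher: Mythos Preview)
Your proof is correct and follows essentially the same approach as the paper: extract $Q$ from the infimum definition of $\rho$, then place $P$ in the plane through $Q$ orthogonal to $OQ$, with the same case split on whether $\rho=\operatorname{rad}(\mathcal{S})$ or $\rho<\operatorname{rad}(\mathcal{S})$. Your packaging via $t=\min\bigl(\sqrt{2\rho\delta},\ \sqrt{\operatorname{rad}(\mathcal{S})^2-|OQ|^2}\bigr)$ is a slightly more uniform presentation than the paper's explicit two-case construction, but the mathematical content and the use of~\eqref{eq:delta2} are identical.
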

\begin{poc}
     By the definition of \(\rho\), the spherical shell \(\{X\in\mathbb{H}: \rho-\delta\le |OX|\le \operatorname{rad}(\mathcal{S})\}\) is not entirely red, so there exists a non-red point \(Q\) with \(\rho-\delta \le |OQ| \le \rho\). Since \(\mathcal{S}\) is red, \(Q\notin \mathcal{S}\) and hence \(|OQ|<\operatorname{rad}(\mathcal{S})\). Fix such a point \(Q\). We consider two cases. 

\paragraph{Case 1.} \(\rho=\operatorname{rad}(\mathcal{S})\). Choose \(P\in\mathcal{S}\) with \(OQ\perp QP\) (the circle \(\mathcal{S}\cap\{x\in\mathbb{H}:(x-Q)\perp OQ\}\) is nonempty). Then
\[
0<|PQ|^2=\operatorname{rad}(\mathcal{S})^2-|OQ|^2\le \rho^2-(\rho-\delta)^2=2\rho\delta-\delta^2\le 2\rho\delta,
\]
and \(\chi(P)=\textup{red}\) because \(P\in\mathcal{S}\).

\paragraph{Case 2.} \(\rho<\operatorname{rad}(\mathcal{S})\). Choose \(P\in\mathbb{H}\) with \(OQ\perp QP\) and \(|QP|=\sqrt{2\rho\delta}\) (again the perpendicular circle through \(Q\) meets the sphere of radius \(\sqrt{2\rho\delta}\) about \(Q\)). Then
\[
|OP|^2=|OQ|^2+|PQ|^2\ge (\rho-\delta)^2+2\rho\delta=\rho^2+\delta^2>\rho^2,
\]
while, using \(|OQ|\le \rho\) and \eqref{eq:delta2},
\[
|OP|^2=|OQ|^2+2\rho\delta\le \rho^2+2\rho\delta<\operatorname{rad}(\mathcal{S})^2.
\]
By the definition of \(\rho\), every point with \(|OX|>\rho\) is red; hence \(\chi(P)=\textup{red}\).

In both cases, the required points \(P,Q\) exist, completing the proof.
\end{poc}

Fix such \(P\) and \(Q\) provided in~\cref{claim:PQ}, then by \eqref{eq:delta1} we have
\begin{equation}{\label{delta1}}
    \rho^2 < c^2-2\rho\delta \le c^2-\rho\delta \le c^2 - (|PQ|/2)^2.
\end{equation}
Moreover, $\varepsilon<\frac{c}{2}$ implies $c^2-\varepsilon^2>\varepsilon^2$, and therefore
\[  
    4\rho(\rho-\delta) = 4\rho^2-4\rho\delta > 6\rho^2-2c^2 > \frac{9}{2}c^2-\frac{3}{2}\varepsilon^2-2c^2 = c^2 +\frac{3}{2}(c^2-\varepsilon^2) > c^2 +\varepsilon^2.
\]
Hence
\begin{equation}\label{Ineq:final}
    \frac{(\varepsilon/2)^2}{\sqrt{c^2-\rho\delta}}\le \frac{(\varepsilon/2)^2+(c/2)^2}{\sqrt{c^2-\rho\delta}} \le \frac{(\varepsilon/2)^2+(c/2)^2}{\rho} \le \rho-\delta.
\end{equation}

Consider the perpendicular-bisector plane
\(\mathbb{Y}\cong\mathbb{E}^{2}\) of the segment \(PQ\) in \(\mathbb{H}\), and set
\[
\mathcal{S}':=\mathcal{S}\cap\mathbb{Y},\quad \mathcal{W}':=\mathcal{W}\cap\mathbb{Y}.
\]
These are circles with the same center \(O'\) (the orthogonal projection of \(O\) onto \(\mathbb{Y}\)) and radii
\[
\textup{rad}(\mathcal{S}')=\sqrt{\,\textup{rad}(\mathcal{S})^2-(|PQ|/2)^2\,},\qquad
\textup{rad}(\mathcal{W}')=\sqrt{\,\textup{rad}(\mathcal{W})^2-(|PQ|/2)^2\,}.
\]

Let $K$ be the midpoint of $PQ$. Notice that to find $Z_1\in \mathcal{S}'$ such that $|Z_1P|=|Z_1Q|=c$ is equivalent to find some $Z_1\in \mathcal{S}'$ such that
\[
|Z_1K|^2=|Z_1P|^2-(|PQ|/2)^2=c^2-(|PQ|/2)^2.
\]
That is, we need to show that the circle centered at $K$ of radius $\sqrt{c^2-(|PQ|/2)^2}$ intersects with $\mathcal{S}'$. It then suffices to show that \[|\textup{rad}(\mathcal{S}')-\sqrt{c^2-(|PQ|/2)^2}| \le |O'K|\le \textup{rad}(\mathcal{S}')+\sqrt{c^2-(|PQ|/2)^2}.\] On one hand, since $OO'QK$ forms a rectangle,  
\[
|O'K|=|OQ|\le \rho \le \sqrt{c^2-(|PQ|/2)^2} \le \textup{rad}(\mathcal{S}')+\sqrt{c^2-(|PQ|/2)^2}.
\]
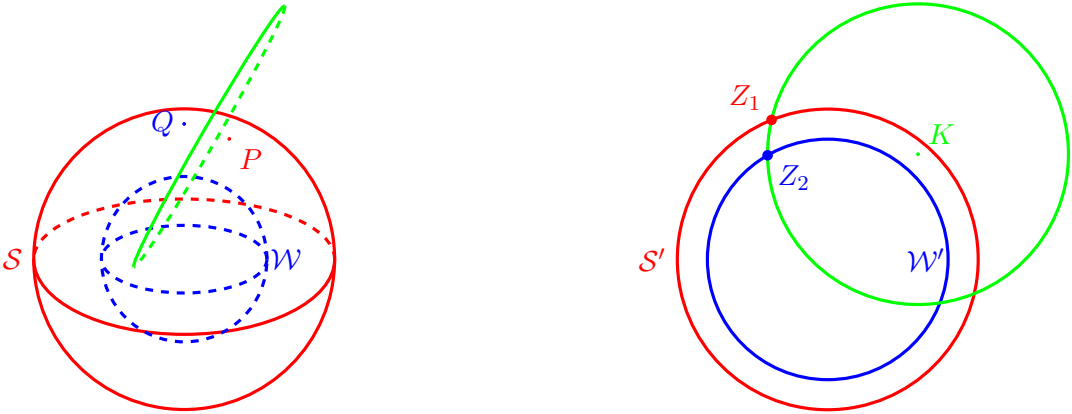
\begin{figure}[htbp]
\centering
\begin{minipage}{0.45\textwidth}
\centering
\begin{tikzpicture}
    \draw[red,very thick,dashed] (0,0) arc (0:180:2cm and 0.8cm);
    \draw[red,very thick] (0,0) arc (0:-180:2cm and 1cm);

    \draw[red,very thick] (-2,0) circle (2cm) node[left=20mm] {$\mathcal{S}$};

    \draw[blue,very thick,dashed] (-0.9,0) arc (0:180:1.1cm and 0.45cm);
    \draw[blue,very thick,dashed] (-0.9,0) arc (0:-180:1.1cm and 0.45cm);

    \draw[blue,very thick,dashed] (-2,0) circle (1.1cm) node[right=10mm] {$\mathcal{W}$};

    \coordinate (Q) at (-2,1.8);
    \filldraw[blue] (Q) circle (0.5pt) node[left] {$Q$};
    \coordinate (P) at (-1.4,1.6);
    \filldraw[red] (P) circle (0.5pt) node[below right] {$P$};
    \begin{scope}[shift={(-0.3,2)}]
    \begin{scope}[rotate=60]
        \draw[green,very thick] (1,1) arc (0:180:2cm and 0.1cm);
        \draw[green,very thick,dashed] (1,1) arc (0:-180:2cm and 0.1cm);
    \end{scope}
    \end{scope}
\end{tikzpicture}
\end{minipage}
\hfill
\begin{minipage}{0.45\textwidth}
\centering
\begin{tikzpicture}
    \draw[name path = circle S,very thick, red] (0,0) circle (2cm) node[left = 20mm] {$\mathcal{S'}$};
    \draw[name path = circle W,very thick, blue] (0,0) circle (1.6cm) node[right = 9mm] {$\mathcal{W'}$};
    \coordinate (Q) at (1.2,1.4);
    \filldraw[green] (Q) circle (0.5pt) node[above right] {$K$};
    \draw[name path = circle P, very thick,green] (1.2,1.4) circle (2cm);
    \path[name intersections={of=circle S and circle P, by={Z1}}];
         \fill[red] (Z1) circle (2pt) node[above left] {$Z_1$};
    \path[name intersections={of=circle W and circle P, by={Z2}}];
         \fill[blue] (Z2) circle (2pt) node[below right] {$Z_2$};
\end{tikzpicture}
\end{minipage}
\caption{The left figure illustrates the monochromatic sphere determined by $P$ and $Q$ intersects both the red and non-red sphere. The right figure shows their projection on $\mathbb{Y}$, which is the perpendicular bisector of segment $PQ$.}
\end{figure}
On the other hand, by~\eqref{Ineq:final} we have
\begin{align*} 
    \left|\textup{rad}(\mathcal{S}')-\sqrt{c^2-(|PQ|/2)^2}\right| &= \left|\sqrt{\textup{rad}(\mathcal{S})^2-(|PQ|/2)^2}-\sqrt{c^2-(|PQ|/2)^2}\right| \\ &= \left| \frac{\textup{rad}(\mathcal{S})^2-c^2}{\sqrt{\textup{rad}(\mathcal{S})^2-(|PQ|/2)^2}+\sqrt{c^2-(|PQ|/2)^2}}\right| \\ &\le \left| \frac{\textup{rad}(\mathcal{S})^2-c^2}{\sqrt{c^2-\rho\delta}}\right| \\ &= \frac{(\varepsilon/2)^2}{\sqrt{c^2-\rho\delta}} \le \rho-\delta.
\end{align*}
Similarly, to find a point $Z_2$ on the sphere \(\mathcal{W}'\) such that \(|Z_{2}P|=|Z_{2}Q|=c\), it suffices to show that 
\[
|\textup{rad}(\mathcal{W}')-\sqrt{c^2-(|PQ|/2)^2}| \le |O'K| \le \textup{rad}(\mathcal{W}')+\sqrt{c^2-(|PQ|/2)^2}.
\] 
We also have 
\[
|O'K|=\rho-\delta\le \sqrt{c^2-(|PQ|/2)^2} \le \textup{rad}(\mathcal{W}')+\sqrt{c^2-(|PQ|/2)^2}
\]
and by~\eqref{Ineq:final} we have
\begin{align*} 
    \left|\textup{rad}(\mathcal{W}')-\sqrt{c^2-(|PQ|/2)^2}\right| &= \left|\sqrt{\textup{rad}(\mathcal{W})^2-(|PQ|/2)^2}-\sqrt{c^2-(|PQ|/2)^2}\right| \\ &= \left| \frac{\textup{rad}(\mathcal{W})^2-c^2}{\sqrt{\textup{rad}(\mathcal{W})^{2}-(|PQ|/2)^2}+\sqrt{c^2-(|PQ|/2)^2}}\right| \\ &\le \left| \frac{\textup{rad}(\mathcal{W})^2-c^2}{\sqrt{c^2-\rho\delta}}\right| \\ &\le  \frac{(\varepsilon/2)^2+(c/2)^2}{\sqrt{c^2-\rho\delta}} \le \rho-\delta.
\end{align*}

Consequently, there exist points \(Z_1\in \mathcal{S}'\) and \(Z_2\in \mathcal{W}'\) with \(|Z_1P|=|Z_1Q|=|Z_2P|=|Z_2Q|=c\).  
By~\eqref{eq:delta3} we have \(|PQ|\le \sqrt{2\rho\delta}< h(a,b,c)\). Then, by Proposition~\ref{prop:KeyInequality}\textup{(2)}, \(\ell_{a,b,c}(|PQ|)<2\sqrt{\,c^2-(|PQ|/2)^2\,}\), so \cref{cor:monochromatic sphere} applies to the pair \((P,Q)\). Hence the \(2\)-sphere \(\{X:|XP|=|XQ|=c\}\) is monochromatic, and in particular \(\chi(Z_1)=\chi(Z_2)\). This contradicts the fact that \(Z_1\in \mathcal{S}'\) is red while \(Z_2\in \mathcal{W}'\) is non-red. This completes the proof.

\section{Rectangles: Proof of~\cref{thm:rectangle}}
We begin by introducing the notation used throughout this proof.

\begin{itemize}
    \item \( \mathcal{S}_n(x) \): the point set of a regular \((n{-}1)\)-dimensional simplex of side length \(x\); that is, a set of \(n\) points \(\boldsymbol{x}_1, \dots, \boldsymbol{x}_n\) such that the Euclidean distance \(\|\boldsymbol{x}_i - \boldsymbol{x}_j\| = x\) for all \(i \neq j\).

    \item \( A \times B \): the Cartesian product of sets \(A\) and \(B\), defined as \(\{(\boldsymbol{a}, \boldsymbol{b}) : \boldsymbol{a} \in A,\, \boldsymbol{b} \in B\}\).

    \item \( \ell_x \): a two-point configuration consisting of a line segment of length \(x\).

    \item \( \mathcal{B}_t(x, y) \): a planar configuration of \(t + 1\) points \(\boldsymbol{v}_1, \boldsymbol{v}_2, \ldots, \boldsymbol{v}_{t+1}\) lying in a common plane, such that
    \[
        \|\boldsymbol{v}_i - \boldsymbol{v}_{i+1}\| = y \quad \text{for all } 1 \le i \le t, \quad \text{and} \quad \|\boldsymbol{v}_1 - \boldsymbol{v}_{t+1}\| = x.
    \]
    That is, a polygonal path of \(a\) consecutive edges of length \(y\), whose endpoints are distance \(x\) apart. Here, it requires that \(t\ge\max\{2,\ceil{\frac{x}{y}}\}\). In particular, \(B_2(x, y)\) forms a triangle.

    \item For \(\mathcal{X} \subseteq \mathcal{Y}\), we write \(\mathcal{Y} \overset{r}{\rightarrow} \mathcal{X}\) if every \(r\)-coloring of \(\mathcal{Y}\) contains a monochromatic configuration congruent to \(\mathcal{X}\). In particular, a configuration \(\mathcal{X} \subseteq \mathbb{E}^n\) is called \emph{Ramsey} if for all \(r\), \(\mathbb{E}^n \overset{r}{\rightarrow} \mathcal{X}\) holds for some \(n = n(\mathcal{X}, r)\).
    \item For \(\mathcal{K}_{1},\mathcal{K}_{2} \subseteq \mathcal{Y}\), we write \(\mathcal{Y} \overset{r}{\rightarrow} (\mathcal{K}_1; \mathcal{K}_2)_{\mathrm{GR}}\) if every \(r\)-coloring of \(\mathcal{Y}\) contains a monochromatic copy of \(\mathcal{K}_1\) or a rainbow congruent copy of \(\mathcal{K}_2\).
\end{itemize}

We will use the following classical results.

\begin{theorem}[\cite{1990JAMS}]\label{thm:RodlFrankl}
Every simplex is Ramsey.
\end{theorem}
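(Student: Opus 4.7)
The plan is to establish this classical theorem of Frankl and R\"odl via a ``super-Ramsey'' strengthening, induction on the number of vertices of the simplex, and the product Ramsey theorem of Erd\H{o}s-Graham-Montgomery-Rothschild-Spencer-Straus as the main combinatorial engine. I would first verify the statement for trivial configurations (one and two points) and then bootstrap to all finite simplices by induction.

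The first substantive step is to handle a regular simplex $\mathcal{S}_n(x)$. The $N$-fold Cartesian power $\mathcal{S}_n(x)^N$ naturally embeds into a Euclidean space of dimension $N(n-1)$, and Hales-Jewett-style combinatorial lines in this power correspond, after an appropriate affine identification, to congruent copies of $\mathcal{S}_n(y)$ at a prescribed scaling $y$. An application of the Hales-Jewett theorem, or equivalently an iterated product Ramsey argument applied to an $r$-coloring restricted to the power, then yields a monochromatic combinatorial line, proving that regular simplices are Ramsey in sufficiently high dimension.

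The second and more delicate step is to reduce an arbitrary simplex $\Delta=\{\boldsymbol{v}_0,\ldots,\boldsymbol{v}_d\}$ to a product of regular ones. Here I would express each $\boldsymbol{v}_i$ as a weighted linear combination of standard basis vectors in a tensor-product ambient space assembled from regular simplices of carefully chosen side lengths, so that the Gram matrix (equivalently, the full edge-length matrix) of $\Delta$ is exactly recovered from the factor contributions. The super-Ramsey property on each factor, combined with the product Ramsey theorem, should then deliver a monochromatic congruent copy of $\Delta$ as an image of a suitable combinatorial line in the product.

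The main obstacle is this embedding/decomposition step: a general simplex does not decompose isometrically as a product of regular simplices, so one must allow ``padded'' or redundant embeddings and absorb the resulting slack by invoking super-Ramsey rather than plain Ramsey guarantees on the factors. Selecting the correct side lengths and dimensions of the factor simplices so that all $\binom{d+1}{2}$ edge constraints of $\Delta$ are satisfied simultaneously is the crux of the argument, and it is precisely here that the Frankl-R\"odl innovation---a quantitative super-Ramsey statement strong enough to withstand this reduction---becomes essential.
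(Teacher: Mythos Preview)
The paper does not prove this theorem at all: it is quoted as a black-box citation from Frankl--R\"odl~\cite{1990JAMS} and used without proof (the paper only invokes it, together with the companion super-Ramsey statement recorded later as Theorem~\ref{thm:simplex super ramsey}). So there is no ``paper's own proof'' to compare against.

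That said, your outline is a reasonable high-level summary of the actual Frankl--R\"odl argument: super-Ramsey for regular simplices, a product/super-Ramsey amplification, and an embedding of an arbitrary simplex into a product of simpler pieces. Two points deserve sharpening. First, in the original paper the base objects are not general regular simplices handled via Hales--Jewett combinatorial lines, but rather two-point configurations (edges), for which the super-Ramsey bound is proved directly by a counting/density argument on Hamming-type spaces; the Hales--Jewett route you describe would give Ramsey but not the quantitative super-Ramsey strength needed for the product step. Second, the embedding step is concrete rather than a vague Gram-matrix decomposition: one shows that any $d$-simplex embeds isometrically into a Cartesian product of finitely many two-point sets (equivalently, into the vertex set of a suitably scaled box), and then the super-Ramsey property is pushed through products. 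Your description of the ``main obstacle'' is accurate in spirit, but the resolution is this explicit box embedding, not a padded product of regular simplices of varying sizes.
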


\begin{theorem}[\cite{1973JCTA}]\label{EGM}
If \(\mathcal{X}\) and \(\mathcal{Y}\) are Ramsey, then so is \(\mathcal{X}\times \mathcal{Y}\).
\end{theorem}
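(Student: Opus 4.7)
The plan is the classical product Ramsey bootstrap from~\cite{1973JCTA}: apply Ramsey for $\mathcal{Y}$ first with an inflated color palette that encodes how each fiber of the form $A\times\{\boldsymbol{y}\}$ is colored, and then apply Ramsey for $\mathcal{X}$ inside any single fiber. Fix $\mathcal{X}\subseteq\mathbb{E}^{m}$ and $\mathcal{Y}\subseteq\mathbb{E}^{n}$, both Ramsey, and fix $r\ge 1$. The goal is to realize $\mathcal{X}\times\mathcal{Y}$ orthogonally inside $\mathbb{E}^{N_1+N_2}\cong\mathbb{E}^{N_1}\times\mathbb{E}^{N_2}$ for suitable $N_1,N_2$.

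I would first use the Ramsey property of $\mathcal{X}$ to pick $N_1$ with $\mathbb{E}^{N_1}\overset{r}{\rightarrow}\mathcal{X}$ and then, by a standard compactness (de Bruijn--Erd\H{o}s) argument, extract a finite witness $A\subseteq\mathbb{E}^{N_1}$ still satisfying $A\overset{r}{\rightarrow}\mathcal{X}$. Set $s:=r^{|A|}$ and, using Ramsey for $\mathcal{Y}$ with $s$ colors, pick $N_2$ with $\mathbb{E}^{N_2}\overset{s}{\rightarrow}\mathcal{Y}$. Given any $r$-coloring $\chi$ of $\mathbb{E}^{N_1+N_2}$, I would form the auxiliary $s$-coloring
\[
\phi\colon\mathbb{E}^{N_2}\longrightarrow[r]^{A},\qquad \phi(\boldsymbol{y}):=\bigl(\chi(\boldsymbol{a},\boldsymbol{y})\bigr)_{\boldsymbol{a}\in A}.
\]
By the choice of $N_2$ there is a congruent copy $\mathcal{Y}'\subseteq\mathbb{E}^{N_2}$ of $\mathcal{Y}$ on which $\phi$ is constant. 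Hence for every $\boldsymbol{y}\in\mathcal{Y}'$ the restricted coloring $\boldsymbol{a}\mapsto\chi(\boldsymbol{a},\boldsymbol{y})$ is one and the same $r$-coloring $\psi$ of $A$. Applying $A\overset{r}{\rightarrow}\mathcal{X}$ to $\psi$ produces a monochromatic congruent copy $\mathcal{X}'\subseteq A$ of $\mathcal{X}$, and then $\mathcal{X}'\times\mathcal{Y}'$ is a congruent copy of $\mathcal{X}\times\mathcal{Y}$ colored uniformly by the common $\psi$-value on $\mathcal{X}'$. This gives $\mathbb{E}^{N_1+N_2}\overset{r}{\rightarrow}\mathcal{X}\times\mathcal{Y}$, so $\mathcal{X}\times\mathcal{Y}$ is Ramsey.

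The only delicate point I expect is the compactness step producing the finite witness $A\subseteq\mathbb{E}^{N_1}$: without it, the auxiliary coloring $\phi$ would land in the uncountable palette $[r]^{\mathbb{E}^{N_1}}$ and Ramsey for $\mathcal{Y}$ could not be invoked. This is a standard de Bruijn--Erd\H{o}s/product-topology argument applied to $r$-colorings of $\mathbb{E}^{N_1}$; alternatively, one may observe that every customary proof establishing that a configuration is Ramsey (for example~\cref{thm:RodlFrankl} for simplices) already supplies an explicit finite witness, so in practice no separate compactness step is required.
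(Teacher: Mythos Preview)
Your argument is correct and is exactly the classical product--Ramsey bootstrap from~\cite{1973JCTA}; the paper does not supply its own proof of this theorem but merely quotes it as a tool, so there is nothing further to compare.
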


Next we provide the formal proof of~\cref{thm:rectangle}.

\begin{proof}[Proof of~\cref{thm:rectangle}]
Let \(\mathcal{R}\) be a rectangle with side length \(x\) and \(y\), where \(x> y\). Let \(m := \left\lceil \frac{x}{y} \right\rceil\). By definitions, both \(\mathcal{S}_7(y)\) and \(\mathcal{B}_2(y, x)\) are simplices, and hence are Ramsey by Theorem~\ref{thm:RodlFrankl}. By Theorem~\ref{EGM}, their Cartesian product \(\mathcal{S}_7(y) \times \mathcal{B}_2(y, x)\) is also Ramsey. Therefore, for any integer \(q\), there exists a dimension \(n_0 = n_0(x, y, q)\) such that
\[
\mathbb{E}^{n_0} \overset{q}{\rightarrow} \mathcal{S}_7(y) \times \mathcal{B}_2(y, x).
\]
Set \(q = (m+1)\binom{3m+1}{2} + 3m + 1\), and fix such an \(n_0\). Define \(n := n_0 + 3m + 2\). Note that \(n\) depends only on the rectangle \(\mathcal{R}\), and is independent of the number of colors \(r\). Let \(\chi \colon \mathbb{E}^{n} \to [r]\) be an arbitrary \(r\)-coloring. Observe that when \(r \le 3\), the statement follows trivially since every rectangle is Ramsey~\cite{1973JCTA}. Therefore, we may assume \(r \ge 4\) throughout the proof. We begin with the following key auxiliary lemma.

\begin{lemma}\label{lemma:aux1}
Let \(\mathcal{R}\) be a rectangle with side lengths \(a\) and \(b\). Let \( r \ge 4 \) and \(s\ge \max \{2,\ceil{\frac{a}{b}}\}\) be integers. Then
\[
\mathcal{S}_{3s+1}(a) \times \mathcal{B}_{s}(a, b) \overset{r}{\rightarrow} (\ell_{a}; \mathcal{R})_{\textup{GR}}.
\]
\end{lemma}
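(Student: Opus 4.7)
The plan is to argue by contradiction. Assume $\chi$ is an $r$-coloring of the product $\mathcal{S}_{3s+1}(a)\times\mathcal{B}_s(a,b)$ that avoids both a monochromatic $\ell_a$ and a rainbow $\mathcal{R}$. Write $\boldsymbol{v}_1,\dots,\boldsymbol{v}_{s+1}$ for the vertices of the polygonal chain and set $\chi_i(\boldsymbol{u}):=\chi(\boldsymbol{u},\boldsymbol{v}_i)$. I first extract two structural observations. Fixing any $i$, the $3s+1$ points $\{(\boldsymbol{u},\boldsymbol{v}_i):\boldsymbol{u}\in\mathcal{S}_{3s+1}(a)\}$ are pairwise at distance $a$ in the orthogonal product, so the no-$\ell_a$ assumption forces $\chi_i$ to be injective. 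Fixing any $\boldsymbol{u}$, the endpoints $(\boldsymbol{u},\boldsymbol{v}_1)$ and $(\boldsymbol{u},\boldsymbol{v}_{s+1})$ lie at distance $\|\boldsymbol{v}_1-\boldsymbol{v}_{s+1}\|=a$, so $\chi_1(\boldsymbol{u})\neq\chi_{s+1}(\boldsymbol{u})$.

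For each $i\in[s]$ and each pair of distinct simplex vertices $\boldsymbol{u}_1,\boldsymbol{u}_2$, the quadruple $\{(\boldsymbol{u}_j,\boldsymbol{v}_k):j\in\{1,2\},\,k\in\{i,i+1\}\}$ is a congruent copy of $\mathcal{R}$; its two length-$a$ sides cannot be monochromatic, so the no-rainbow hypothesis forces one of the four remaining vertex pairs to share a color. Introducing the non-stable set $T_i:=\{\boldsymbol{u}:\chi_i(\boldsymbol{u})\neq\chi_{i+1}(\boldsymbol{u})\}$ and the two-element color set $C(\boldsymbol{u}):=\{\chi_i(\boldsymbol{u}),\chi_{i+1}(\boldsymbol{u})\}$, the constraint distills to: for all distinct $\boldsymbol{u}_1,\boldsymbol{u}_2\in T_i$, $C(\boldsymbol{u}_1)\cap C(\boldsymbol{u}_2)\neq\emptyset$. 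The key quantitative step—and the main, though modest, obstacle—is proving $|T_i|\le 3$. The family $\{C(\boldsymbol{u}):\boldsymbol{u}\in T_i\}$ of two-element sets is pairwise intersecting, so by the elementary sunflower-versus-triangle dichotomy either all members share a common color $c^{*}$, or their union has size at most three. In the former case, the injectivity of $\chi_i$ and $\chi_{i+1}$ each contribute at most one vertex whose respective coordinate equals $c^{*}$, giving $|T_i|\le 2$; in the latter case, $\chi_i|_{T_i}$ injectively takes values in a three-element palette, giving $|T_i|\le 3$.

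Finally, the second structural observation ensures that for every $\boldsymbol{u}\in\mathcal{S}_{3s+1}(a)$ the sequence $\chi_1(\boldsymbol{u}),\dots,\chi_{s+1}(\boldsymbol{u})$ is non-constant, so $\boldsymbol{u}\in T_i$ for at least one $i\in[s]$. A double count then yields
\[
3s+1\;\le\;\sum_{\boldsymbol{u}\in\mathcal{S}_{3s+1}(a)}\bigl|\{i:\boldsymbol{u}\in T_i\}\bigr|\;=\;\sum_{i=1}^{s}|T_i|\;\le\;3s,
\]
a contradiction that closes the argument. The simplex size $3s+1$ and the chain length $s$ are matched precisely so that the column-wise bound $|T_i|\le 3$ is overshot by a single unit, which is what makes the double count bite.
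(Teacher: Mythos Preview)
Your proof is correct and follows essentially the same strategy as the paper's: both reduce to showing that the ``non-stable'' set $T_i=\{\boldsymbol{u}:\chi_i(\boldsymbol{u})\neq\chi_{i+1}(\boldsymbol{u})\}$ has size at most $3$, and then pigeonhole (your double count) against the $3s+1$ simplex vertices. The only difference is in how the bound $|T_i|\le 3$ is established: the paper takes four points of $T_i$ and directly exhibits a rainbow rectangle by excluding at most one ``blue in column $i$'' and at most one ``red in column $i{+}1$'' among the remaining three, whereas you phrase the same obstruction abstractly via the sunflower/triangle dichotomy for pairwise-intersecting $2$-sets $C(\boldsymbol{u})$ combined with the injectivity of $\chi_i,\chi_{i+1}$. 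Both arguments are short and equivalent; yours is a touch more structural, the paper's a touch more constructive.
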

\begin{proof}[Proof of Lemma~\ref{lemma:aux1}]
Let \( \tau \colon \mathcal{S}_{3s+1}(a) \times \mathcal{B}_s(a, b) \to [r] \) be an arbitrary \(r\)-coloring of the product configuration. Suppose that there is no monochromatic copy of \(\ell_a\). We will show the existence of a rainbow copy of \(\mathcal{T}\).

Recall that \( \mathcal{B}_s(a, b) \) consists of \(s + 1\) points \(\boldsymbol{v}_1, \boldsymbol{v}_2, \ldots, \boldsymbol{v}_{s+1}\) lying in a common plane, such that
    \[
        \|\boldsymbol{v}_i - \boldsymbol{v}_{i+1}\| = b \quad \text{for all } 1 \le i \le s, \quad \text{and} \quad \|\boldsymbol{v}_1 - \boldsymbol{v}_{s+1}\| = a.
    \]
Fix any point \(\boldsymbol{x}_k \in \mathcal{S}_{3s+1}(x)\), and consider the sequence
\[
(\boldsymbol{x}_k, \boldsymbol{v}_1),\ (\boldsymbol{x}_k, \boldsymbol{v}_2),\ \ldots,\ (\boldsymbol{x}_k, \boldsymbol{v}_{s+1})\in \mathcal{S}_{3s+1}(a)\times \mathcal{B}_{s}(a,b).
\]
By definitions, \(\|(\boldsymbol{x}_k, \boldsymbol{v}_1) - (\boldsymbol{x}_k, \boldsymbol{v}_{s+1})\| = a\). Since \(\tau\) avoids monochromatic copies of \(\ell_a\), these two points must have different colors:
\[
\tau(\boldsymbol{x}_k, \boldsymbol{v}_1) \ne \tau(\boldsymbol{x}_k, \boldsymbol{v}_{s+1}).
\]
Thus, there exists some index \(i \in \{1, \dots, s\}\) such that
\[
\tau(\boldsymbol{x}_k, \boldsymbol{v}_i) \ne \tau(\boldsymbol{x}_k, \boldsymbol{v}_{i+1}).
\]

Now, vary \(\boldsymbol{x}_k\) over all \(3s+1\) points in \(\mathcal{S}_{3s+1}(a)\). For each \(k\in [3s+1]\), associate to it an index \(i_k\) where \(\tau(\boldsymbol{x}_k, \boldsymbol{v}_{i_k}) \ne \tau(\boldsymbol{x}_k, \boldsymbol{v}_{i_k+1})\). By the pigeonhole principle, there exists some fixed \(i\) such that
\[
\tau(\boldsymbol{x}_k, \boldsymbol{v}_i) \ne \tau(\boldsymbol{x}_k, \boldsymbol{v}_{i+1})
\]
holds for at least \(\ceil{\frac{3s+1}{s}}=4\) distinct values of \(k\). Without loss of generality, assume this holds for \(k = 1, 2, 3, 4\). We now examine the colors of the points \((\boldsymbol{x}_k, \boldsymbol{v}_i)\) and \((\boldsymbol{x}_k, \boldsymbol{v}_{i+1})\) for \(k = 1, 2, 3, 4\). Without loss of generality, we may assume
\[
\tau(\boldsymbol{x}_1, \boldsymbol{v}_i) = \text{red}, \quad \tau(\boldsymbol{x}_1, \boldsymbol{v}_{i+1}) = \text{blue}.
\]
For each fixed \(j \in \{i, i+1\}\), the set \(\{(\boldsymbol{x}_k, \boldsymbol{v}_j) : k = 1, 2, 3, 4\}\) forms a regular simplex with all pairwise distances equal to \(a\). Since \(\tau\) avoids monochromatic copies of \(\ell_a\), these four points must all receive distinct colors. In particular, the followings hold.
\begin{itemize}
    \item Among \(\tau(\boldsymbol{x}_k, \boldsymbol{v}_{i+1})\) for \(k = 1, 2, 3, 4\), at most one can be colored red.
    \item Among \(\tau(\boldsymbol{x}_k, \boldsymbol{v}_i)\) for \(k = 1, 2, 3, 4\), at most one can be colored blue.
\end{itemize}
Therefore, there exists some \(k \ne 1\) (without loss of generality, assume \(k = 2\)) such that
\[
\tau(\boldsymbol{x}_2, \boldsymbol{v}_i) \ne \text{blue}, \quad \tau(\boldsymbol{x}_2, \boldsymbol{v}_{i+1}) \ne \text{red}.
\]
Thus, the four vertices
\[
(\boldsymbol{x}_1, \boldsymbol{v}_i), \quad (\boldsymbol{x}_1, \boldsymbol{v}_{i+1}), \quad (\boldsymbol{x}_2, \boldsymbol{v}_i), \quad (\boldsymbol{x}_2, \boldsymbol{v}_{i+1})
\]
receive four distinct colors under \(\tau\). These form a rainbow copy of the rectangle \(\mathcal{R}\) with side lengths \(a\) and \(b\), a contradiction. This completes the proof of~\cref{lemma:aux1}.
\end{proof}

 For each point \(\boldsymbol{u} \in \mathbb{E}^{n_0}\), consider the product configuration
\[
(\boldsymbol{u}, \mathcal{S}_{3m+1}(x) \times \mathcal{B}_m(x, y)) := \{(\boldsymbol{u}, \boldsymbol{v}) : \boldsymbol{v} \in \mathcal{S}_{3m+1}(x) \times \mathcal{B}_m(x, y)\} \subseteq \mathbb{E}^{n_0 + 3m + 2}.
\]
By Lemma~\ref{lemma:aux1}, under the coloring \(\chi\), each such set either contains a rainbow copy of \(\mathcal{R}\), or a monochromatic copy of \(\ell_x\). If a rainbow copy of \(\mathcal{R}\) occurs for some \(\boldsymbol{u}\), we are done. Thus, we may assume that for every \(\boldsymbol{u} \in \mathbb{E}^{n_0}\), no rainbow copy of \(\mathcal{R}\) exists. Then Lemma~\ref{lemma:aux1} ensures that each associated product configuration must contain a monochromatic copy of \(\ell_x\). Since \(\mathcal{S}_{3m+1}(x) \times \mathcal{B}_m(x, y)\) is finite, there are only finitely many possible positions for such \(\ell_x\)'s. More precisely, each copy of \(\ell_x\) arises in one of the following two forms:
\begin{itemize}
    \item A line between two distinct points in \(\mathcal{S}_{3m+1}(x)\), paired with a fixed point in \(\mathcal{B}_m(x,y)\). There are \(\binom{3m+1}{2}\) choices of lines in \(\mathcal{S}_{3m+1}(x)\), and for each such line, \(m+1\) possible choices of a point in \(\mathcal{B}_m(x,y)\), yielding a total of \((m+1) \cdot \binom{3m+1}{2} \) copies.
    
    \item There exists one line in \(\mathcal{B}_m(x,y)\) of length exactly \(x\), namely the line between \(v_1\) and \(v_{m+1}\), and pair it with a fixed point in \(\mathcal{S}_{3m+1}(x)\). Since there are \(3m+1\) choices of points in \(\mathcal{S}_{3m+1}(x)\), this contributes \(3m+1\) copies.
\end{itemize}
Therefore, the number of distinct copies of \(\ell_x\) in \(\mathcal{S}_{3m+1}(x) \times \mathcal{B}_m(x, y)\) is bounded by
\[
q= (m+1)\binom{3m+1}{2} + 3m+1.
\]
To formalize the selection of these monochromatic \(\ell_x\)'s, we define a labeling function
\[
\lambda : \{ \ell_x \subseteq \mathcal{S}_{3m+1}(x) \times \mathcal{B}_m(x,y) \} \to [q],
\]
which assigns a unique label to each possible position of \(\ell_x\).

We define an auxiliary coloring \(\gamma : \mathbb{E}^{n_0} \to [q]\) by the rule:
for each \(\boldsymbol{u} \in \mathbb{E}^{n_0}\), select one monochromatic copy of \(\ell_x\), denoted by \(\ell(\boldsymbol{u})\), within its associated product configuration under \(\chi\), and set
\[
\gamma(\boldsymbol{u}) := \lambda( \ell(\boldsymbol{u}) ).
\]
That is, \(\gamma(\boldsymbol{u})\) records the position label of the chosen monochromatic \(\ell_x\). By definitions of \(n_{0}\) and \(q\),
\[
\mathbb{E}^{n_0} \overset{q}{\rightarrow} \mathcal{S}_7(y) \times \mathcal{B}_2(y, x).
\]
Thus, there exists a monochromatic copy of \(\mathcal{S}_7(y) \times \mathcal{B}_2(y, x)\) under \(\gamma\). Fix this monochromatic copy and denote the set of its points as \(V \subseteq \mathbb{E}^{n_0}\).

By the definition of \(\gamma\), every point \(\boldsymbol{u} \in V\) is associated with the same monochromatic copy of \(\ell_x\), whose endpoints \(\boldsymbol{v}_1, \boldsymbol{v}_2\) lie in \(\mathcal{S}_{3m+1}(x) \times \mathcal{B}_m(x,y)\). That is, for all \(\boldsymbol{u} \in V\), the pair \(\{ (\boldsymbol{u}, \boldsymbol{v}_1), (\boldsymbol{u}, \boldsymbol{v}_2) \}\) forms a monochromatic copy of \(\ell_x\) under \(\chi\). Consider now the set of points
\[
V \times \{ \boldsymbol{v}_1, \boldsymbol{v}_2 \} \subseteq \mathbb{E}^{n_0 + 3m + 2}.
\]
If this set contains a rainbow copy of \(R\) under \( \chi \), we are done. Otherwise, focus on the subset
\[
(V, \boldsymbol{v}_1) := \{ (\boldsymbol{u}, \boldsymbol{v}_1) : \boldsymbol{u} \in V \}.
\]
Since \(V\) is isomorphic to \(\mathcal{S}_7(y) \times \mathcal{B}_2(y, x)\), we can apply Lemma~\ref{lemma:aux1} again with parameter \(s = 2\). Here, we interpret the rectangle \(\mathcal{T}\) as being rotated by \(\frac{\pi}{2}\), effectively treating the side of length \(y\) as the dominant side in the product structure. This flexible application of Lemma~\ref{lemma:aux1} allows us to leverage the same combinatorial argument with the roles of \(x\) and \(y\) interchanged. More precisely, if a rainbow copy of \(\mathcal{T}\) under \(\chi\) is found, we are done; otherwise, Lemma~\ref{lemma:aux1} guarantees the existence of a monochromatic copy of \(\ell_y\), say between the points \((\boldsymbol{u}_1, \boldsymbol{v}_1)\) and \((\boldsymbol{u}_2, \boldsymbol{v}_1)\).

Finally, we make the following observations under the coloring function \(\chi\):
\begin{itemize}
    \item The points \((\boldsymbol{u}_1, \boldsymbol{v}_1)\) and \((\boldsymbol{u}_2, \boldsymbol{v}_1)\) share the same color, as they form a monochromatic copy of \(\ell_y\) constructed in the previous step. 
    \item The points \((\boldsymbol{u}_1, \boldsymbol{v}_1)\) and \((\boldsymbol{u}_1, \boldsymbol{v}_2)\) also share the same color, since the line with endpoints \(\boldsymbol{v}_1, \boldsymbol{v}_2\) was fixed as a monochromatic \(\ell_x\) across all associated product configurations. Likewise, the points \((\boldsymbol{u}_2, \boldsymbol{v}_1)\) and \((\boldsymbol{u}_2, \boldsymbol{v}_2)\) are colored identically.
\end{itemize}
Therefore, the four points \( (\boldsymbol{u}_1, \boldsymbol{v}_1),  (\boldsymbol{u}_1, \boldsymbol{v}_2), (\boldsymbol{u}_2, \boldsymbol{v}_1), (\boldsymbol{u}_2, \boldsymbol{v}_2)\) form a monochromatic rectangle congruent to \(\mathcal{R}\) under the original coloring \(\chi\). This completes the proof.
\end{proof}

\section{A general framework for low-dimensional simplices}
\subsection{Notation}
Most notation in this section is identical to those above. For the sake of convenience, we list them here again.
\begin{itemize}
    \item We use blackboard bold letters for spaces, in particular, $\mathbb{E}^n$ for the $n$-dimensional Euclidean space. We use calligraphy, uppercase letters such as $\mathcal{X},\mathcal{Y}$ for set of points in $\mathbb{E}^n$.
    \item We use boldface, lowercase letters such as $\bm{a},\bm{b}$ for points in $\mathbb{E}^n$ only; and their difference $\bm{a}-\bm{b}$ for the vector between. Besides, $\|\bm{a}-\bm{b}\|$ represents the usual Euclidean norm of this vector, or equivalently, the distance between points $\bm{a}$ and $\bm{b}.$ Sometimes, we write $\text{dist}(\bm{a},\bm{b})$ for $\|\bm{a}-\bm{b}\|$ if $\bm{a}$ and $\bm{b}$ are expressed in their coordinates.
    \item For three points $\bm{a},\bm{b},\bm{c}$, we use $\angle \bm{a}\bm{b}{\bm{c}}$ for the induced angle between vectors $\bm{b}-\bm{a}$ and $\bm{b}-\bm{c}.$ 
    Besides, we use $(\bm{b}-\bm{a})\cdot(\bm{b}-\bm{c})$ for the dot product of these two vectors.
    
\end{itemize}

\subsection{Canonical perturbation via the simplex super-Ramsey theorem}
In this part we invoke the \emph{simplex super-Ramsey} theorem, for completeness we record it below.
\begin{theorem}[\cite{1990JAMS}]\label{thm:simplex super ramsey}
For any simplex $\Delta$, there exist positive constants $c$, $\delta$ and $N$ only depend on $\Delta$, and a configuration $\mathcal{X}\subseteq \mathbb{E}^{n}$ for every $n>N$ with the following two properties: 
    \begin{itemize}
        \item $|\mathcal{X}|<c^n$;
       \item $|\mathcal{Y}| < |\mathcal{X}| \cdot (1+\delta)^{-n}$ holds for all subsets $\mathcal{Y} \subseteq \mathcal{X}$ that contain no congruent copy of $\Delta$.
    \end{itemize}
   
\end{theorem}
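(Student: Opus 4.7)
The plan is to argue by contradiction, leveraging the simplex super-Ramsey theorem (\cref{thm:simplex super ramsey}) fiber-by-fiber. Assume that no such tuple $(\boldsymbol{x}_1,\dots,\boldsymbol{x}_m)$ exists; then for \emph{every} $(\boldsymbol{x}_1,\dots,\boldsymbol{x}_m)\in\mathbb{P}_1\times\cdots\times\mathbb{P}_m$ there is at least one index $i\in[m]$ such that $S_i(\boldsymbol{p}_i;\varepsilon)$ is monochromatic. The goal is to convert this uniform local obstruction into an honest monochromatic copy of $\Delta$ inside a single fiber, contradicting the hypothesis on $\chi$.

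For the setup I would invoke \cref{thm:simplex super ramsey} with the simplex $\Delta$ to obtain constants $c,\delta,N>0$ depending only on $\Delta$, and place a super-Ramsey configuration $\mathcal{Y}_i\subseteq\mathbb{P}_i$ in each fiber by choosing $n_i>N$ large enough that $(1+\delta)^{n_i}>mr$. On the finite product $\Omega:=\mathcal{Y}_1\times\cdots\times\mathcal{Y}_m$, the failure assumption assigns to each tuple $\vec{\boldsymbol{y}}\in\Omega$ a witness $(i(\vec{\boldsymbol{y}}),c(\vec{\boldsymbol{y}}))\in[m]\times[r]$ such that the sphere $S_{i(\vec{\boldsymbol{y}})}$ evaluated at $\vec{\boldsymbol{y}}$ is entirely of color $c(\vec{\boldsymbol{y}})$. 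A pigeonhole over the $mr$ possible witnesses, followed by averaging over the coordinates other than $i_0$, would produce a fixed pair $(i_0,c_0)$, a fixed partial tuple $(\boldsymbol{y}_j)_{j\neq i_0}\in\prod_{j\neq i_0}\mathcal{Y}_j$, and a subset $Y'\subseteq\mathcal{Y}_{i_0}$ with $|Y'|\ge |\mathcal{Y}_{i_0}|/(mr)$, such that for every $\boldsymbol{y}_{i_0}\in Y'$ the $\varepsilon$-sphere inside $\mathbb{P}_{i_0}$ centered at $\boldsymbol{y}_{i_0}$ (with the remaining coordinates frozen at $\boldsymbol{v}_{i_0}$ and the chosen $(\boldsymbol{y}_j)_{j\neq i_0}$) is entirely colored $c_0$ under $\chi$.

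The calibration $(1+\delta)^{n_{i_0}}>mr$ then gives $|Y'|>|\mathcal{Y}_{i_0}|(1+\delta)^{-n_{i_0}}$, so the super-Ramsey property forces $Y'$ to contain a congruent copy $\{\boldsymbol{y}_{i_0}^{(1)},\dots,\boldsymbol{y}_{i_0}^{(d+1)}\}$ of $\Delta$ inside $\mathcal{Y}_{i_0}$. The key trick is then a uniform translation: pick any unit vector $\boldsymbol{u}\in\mathbb{P}_{i_0}$, and observe that $\{\boldsymbol{y}_{i_0}^{(k)}+\varepsilon\boldsymbol{u}\}_{k=1}^{d+1}$ is also a congruent copy of $\Delta$ (being a translate), while each $\boldsymbol{y}_{i_0}^{(k)}+\varepsilon\boldsymbol{u}$ sits on the $\varepsilon$-sphere around $\boldsymbol{y}_{i_0}^{(k)}$ and hence receives color $c_0$. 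Reinserting $\boldsymbol{v}_{i_0}$ in the $0$-th slot and $\boldsymbol{y}_j$ in slot $j$ for $j\neq i_0$ lifts this to a monochromatic congruent copy of $\Delta$ inside $\mathbb{P}$, the desired contradiction.

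The step I expect to require the most care is the quantitative calibration: ensuring that a single pair $(\delta,N)$ from \cref{thm:simplex super ramsey} simultaneously supports the super-Ramsey configurations in all $m$ fibers while still beating the pigeonhole loss $mr$, and ensuring that the translation by $\varepsilon\boldsymbol{u}$ inside the affine span of $\mathcal{Y}_{i_0}$ genuinely produces a congruent (not just affinely related) copy of $\Delta$. The geometric translation trick and the final lift back to $\mathbb{P}$ are, by contrast, essentially formal.
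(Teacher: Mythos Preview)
A mismatch first: the displayed statement is \cref{thm:simplex super ramsey}, which the paper merely cites from Frankl--R\"odl and does not prove. Your proposal is in fact a proof of \cref{thm:canonical perturbation} (the canonical perturbation theorem), which \emph{uses} \cref{thm:simplex super ramsey} as input; I review it as such.

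Your argument is correct for \cref{thm:canonical perturbation} as literally stated, and it is markedly simpler than the paper's route. Your worry about the translation step is unwarranted: the shift $\boldsymbol{y}_{i_0}^{(k)}\mapsto\boldsymbol{y}_{i_0}^{(k)}+\varepsilon\boldsymbol{u}$ takes place in the full Euclidean fiber $\mathbb{P}_{i_0}$, not inside the discrete set $\mathcal{Y}_{i_0}$, so it is a genuine isometry and the translated simplex is congruent to $\Delta$.

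The paper takes a more involved route. It constructs an auxiliary simplex $\mathcal{B}$ containing the vertices of $\Delta$ together with intermediate ``chain'' points spaced closer than $2\varepsilon$, so that the union of the $\varepsilon$-spheres centered at $\mathcal{B}$ is connected. Connectivity then forces (\cref{claim:NotMono}) that if all these spheres are monochromatic they share a single color, and the base points already produce a monochromatic $\Delta$ --- with no pigeonhole over the palette $[r]$. The paper then applies super-Ramsey for the pattern $\mathcal{B}$ (not $\Delta$) in the fibers $\mathbb{P}_2,\dots,\mathbb{P}_m$ through an inductive auxiliary coloring (\cref{claim:Iteratively}) whose range is $\mathcal{B}\times\mathcal{X}_2\times\cdots\times\mathcal{X}_i$.

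The payoff of that extra machinery is that the paper's $n_i$ are determined solely by $\Delta$, $\varepsilon$, and $m$, with no dependence on $r$: the pigeonhole losses are of size $|\mathcal{B}|\prod_{j<i}c(\mathcal{B})^{n_j}$, never $r$. Your calibration $(1+\delta)^{n_i}>mr$ makes your $n_i$ grow with $r$. Since \cref{thm:canonical perturbation} as stated fixes $r$ before the existential on $n_1,\dots,n_m$, your proof is formally valid; but the downstream applications (\cref{thm:triangle another proof}, \cref{thm:conditioned-simplex-proof}) set the ambient dimension to $n_0+n_1+\cdots+n_m$ and need it to be independent of $r$ --- that is exactly what the ``canonical Ramsey property'' demands. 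So your shortcut proves the theorem but not in the quantitatively uniform form the paper actually uses; the connectivity trick via $\mathcal{B}$ is precisely what earns the paper its $r$-freedom.
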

For convenience, we restate~\cref{thm:canonical-perturbation} here.

\begin{theorem}\label{thm:canonical perturbation}
Let \(d,n_0,m,r\ge 1\) be integers and let \(\Delta\subseteq\mathbb{E}^{d}\) be a simplex.
Let \(\mathcal{X}=\{\boldsymbol{v}_1,\dots,\boldsymbol{v}_m\}\subseteq \mathbb{P}_0\cong\mathbb{E}^{n_0}\) be any \(m\)-point set, and fix \(\varepsilon>0\).
Then there exist integers \(n_1,\dots,n_m\) with the following property: for every \(r\)-coloring \(\chi\) of the orthogonal product
\[
\mathbb{P}:=\mathbb{P}_0\times\mathbb{P}_1\times\cdots\times\mathbb{P}_m,
\qquad \mathbb{P}_i\cong\mathbb{E}^{n_i}\ (i=0,1,\dots,m),
\]
that contains no monochromatic copy of \(\Delta\), there exist points \(\boldsymbol{x}_i\in\mathbb{P}_i\) for \(i=1,\dots,m\) such that, writing
\(\boldsymbol{p}_i:=(\boldsymbol{v}_i,\boldsymbol{x}_1,\dots,\boldsymbol{x}_m)\in\mathbb{P}\),
each \(\varepsilon\)-sphere \(S_i(\boldsymbol{p}_i;\varepsilon)\) is not monochromatic under \(\chi\).
\end{theorem}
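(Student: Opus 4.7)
The plan is to argue by contradiction, repackaging the tuple-wise geometric obstruction as a finite coloring of the fiber product to which the simplex Ramsey (or super-Ramsey) theorem applies, and then transferring a monochromatic simplex back to $\mathbb{P}$ by a rigid translation in a single fiber.

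Suppose $\chi$ contains no monochromatic copy of $\Delta$, and assume for contradiction that for every $(\boldsymbol{x}_1,\dots,\boldsymbol{x}_m)\in\mathbb{P}_1\times\cdots\times\mathbb{P}_m$ there exists some $i\in[m]$ such that $S_i(\boldsymbol{p}_i;\varepsilon)$ is monochromatic under $\chi$, where $\boldsymbol{p}_i=(\boldsymbol{v}_i,\boldsymbol{x}_1,\dots,\boldsymbol{x}_m)$. Define an auxiliary finite coloring
\[
\psi\colon \mathbb{P}_1\times\cdots\times\mathbb{P}_m\ \longrightarrow\ [m]\times[r],\qquad \psi(\boldsymbol{x}_1,\dots,\boldsymbol{x}_m):=(i_0,c),
\]
where $i_0$ is the smallest index for which $S_{i_0}(\boldsymbol{p}_{i_0};\varepsilon)$ is monochromatic and $c\in[r]$ is the common color of that sphere. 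Choose $n_1,\dots,n_m$ so that $\mathbb{P}_1\times\cdots\times\mathbb{P}_m\cong\mathbb{E}^{n_1+\cdots+n_m}$ is Ramsey for the simplex $\Delta$ under $(mr)$-colorings, which is guaranteed by \cref{thm:simplex super ramsey}. Applied to $\psi$, this produces a monochromatic copy $\{\boldsymbol{y}^{(1)},\dots,\boldsymbol{y}^{(d+1)}\}$ of $\Delta$ sharing a common label $(i_0,c)$.

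For the lift, write $\boldsymbol{y}^{(j)}=(\boldsymbol{x}_1^{(j)},\dots,\boldsymbol{x}_m^{(j)})$, so that each sphere $S_{i_0}((\boldsymbol{v}_{i_0},\boldsymbol{y}^{(j)});\varepsilon)\subseteq\mathbb{P}$ is entirely colored $c$ under $\chi$. Fix any $\boldsymbol{e}\in\mathbb{P}_{i_0}$ with $\|\boldsymbol{e}\|=\varepsilon$ (possible since $n_{i_0}\ge 1$) and set
\[
\boldsymbol{q}^{(j)}:=(\boldsymbol{v}_{i_0},\boldsymbol{x}_1^{(j)},\dots,\boldsymbol{x}_{i_0-1}^{(j)},\boldsymbol{x}_{i_0}^{(j)}+\boldsymbol{e},\boldsymbol{x}_{i_0+1}^{(j)},\dots,\boldsymbol{x}_m^{(j)}).
\]
Each $\boldsymbol{q}^{(j)}$ lies in $S_{i_0}((\boldsymbol{v}_{i_0},\boldsymbol{y}^{(j)});\varepsilon)$, hence $\chi(\boldsymbol{q}^{(j)})=c$; and a direct coordinatewise computation gives $\|\boldsymbol{q}^{(j)}-\boldsymbol{q}^{(k)}\|=\|\boldsymbol{y}^{(j)}-\boldsymbol{y}^{(k)}\|$ for all $j,k$, so $\{\boldsymbol{q}^{(1)},\dots,\boldsymbol{q}^{(d+1)}\}$ is a congruent copy of $\Delta$. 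This monochromatic copy of $\Delta$ in $\mathbb{P}$ contradicts our initial hypothesis on $\chi$.

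The key conceptual step is identifying the correct auxiliary coloring $\psi$: it converts an obstruction involving infinitely many $\varepsilon$-spheres (one for each tuple and index) into a single finite $(mr)$-coloring of the fiber product, where the Frankl--R\"odl simplex Ramsey / super-Ramsey theorem is directly applicable. The rigid translation by the common vector $\boldsymbol{e}$ in the $i_0$-th fiber is the clean geometric mechanism that transfers monochromaticity from the $d+1$ spheres back to a single copy of $\Delta$ in $\mathbb{P}$, closing the contradiction; once the correct $\psi$ is written down, the rest is essentially a one-line translation argument.
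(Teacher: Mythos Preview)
Your proof is correct, and it is considerably simpler than the paper's. The paper takes an elaborate iterative route: it builds an auxiliary simplex \(\mathcal{B}\subseteq\mathbb{P}_1\) whose vertices are arranged so that the associated \(\varepsilon\)-spheres form a connected chain, proves (via this connectivity) that not all of those spheres can be monochromatic without forcing a monochromatic \(\Delta\), and then uses the \emph{super}-Ramsey theorem inductively across the fibers \(\mathbb{P}_2,\dots,\mathbb{P}_m\), with pigeonhole at each step, to thread together a single tuple \((\boldsymbol{x}_1,\dots,\boldsymbol{x}_m)\) that works simultaneously for all \(i\).

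Your argument replaces all of this with a single application of the ordinary simplex Ramsey theorem to the product \(\mathbb{P}_1\times\cdots\times\mathbb{P}_m\) under the \((mr)\)-coloring \(\psi\). The decisive simplification is that the label \((i_0,c)\) already records both the offending fiber index and the sphere's color, so a \(\psi\)-monochromatic copy of \(\Delta\) can be lifted by a uniform translation \(\boldsymbol{e}\) in that single fiber to a \(\chi\)-monochromatic copy of \(\Delta\). The paper's connectivity construction of \(\mathcal{B}\) and the quantitative strength of super-Ramsey are not needed for the theorem as stated; they would only matter if one wanted finer control over the individual \(n_i\) or over where the witnessing tuple lives, neither of which the statement demands. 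Your route is the cleaner one here.
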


\begin{proof}[Proof of~\cref{thm:canonical perturbation}]
   Let \(\Delta\) consist of \(d+1\) points. Choose a congruent copy
\(\{\boldsymbol{w}_{0},\boldsymbol{w}_{1},\ldots,\boldsymbol{w}_{d}\}\subseteq\mathbb{E}^{d}\) of \(\Delta\) with
\(\boldsymbol{w}_{0}=\boldsymbol{0}\).
Let \(A_{1}\) be the \((d+1)\times d\) matrix whose rows are
\(\boldsymbol{w}_{0},\ldots,\boldsymbol{w}_{d}\).
For each \(i\in[d]\), pick an integer \(m_{i}\ge 2\) and set
\(\varepsilon_{i}:=\|\boldsymbol{w}_{i}\|/m_{i}<\varepsilon\).
Define
\[
n_{1}:=\sum_{i=1}^{d}(m_{i}-1)+d .
\]
Let $A_2$ be an \(n_{1}-d\) by \(d\) matrix with rows of the form \[ \left\{\frac{j}{m_{i}}\cdot \boldsymbol{w}_{i} \biggm| 1\le i\le d, 1\le j \le m_i-1\right\}. \]
Form the block matrix \(A\in \mathbb{R}^{(n_{1}+1)\times n_{1}}\) by
\[
A \;:=\;
\begin{bmatrix}
A_{1} & 0\\[2pt]
A_{2} & (\varepsilon/2)\,I_{n_{1}-d}
\end{bmatrix},
\]
where \(I_{n_{1}-d}\) is the identity matrix of size \(n_{1}-d\).
Let \(\mathcal{B}\subseteq\mathbb{P}_{1}\cong\mathbb{E}^{n_{1}}\) be the set of row vectors of \(A\),
and enumerate it as \(\mathcal{B}=\{\boldsymbol{b}_{1},\ldots,\boldsymbol{b}_{n_{1}+1}\}\).
Since the lower-right block is \((\varepsilon/2)I_{n_{1}-d}\), the rows of \(A\) are
affinely independent; in particular, \(\mathcal{B}\) is a simplex in \(\mathbb{E}^{n_{1}}\).

Apply Theorem~\ref{thm:simplex super ramsey} to the simplex \(\mathcal{B}\), there exist positive constants \(c(\mathcal{B})\), \(\delta(\mathcal{B})\), and \(N(\mathcal{B})\)
(depending only on \(\mathcal{B}\))
such that, for each \(i=2,\ldots,m\), we can choose an integer \(n_{i}\) with
\[
n_{i} \ge \max\Big\{\,N(\mathcal{B}),\ \log_{1+\delta(\mathcal{B})}\!\Bigl(
\,|\mathcal{B}|\cdot\prod_{j=2}^{i-1} c(\mathcal{B})^{\,n_{j}} \Bigr)\Bigr\},
\]
and a set \(\mathcal{X}_{i}\subseteq\mathbb{E}^{n_{i}}\) such that
\begin{itemize}
\item \(|\mathcal{X}_{i}|< c(\mathcal{B})^{\,n_{i}}\);
\item for every \(\mathcal{Y}\subseteq \mathcal{X}_{i}\) containing no congruent copy of \(\mathcal{B}\),
we have \(|\mathcal{Y}|< |\mathcal{X}_{i}|\cdot(1+\delta(\mathcal{B}))^{-n_{i}}\).
\end{itemize}
We now fix the integers \(n_{1},\ldots,n_{m}\) and the sets \(\mathcal{X}_{2},\ldots,\mathcal{X}_{m}\) above.
In what follows we work inside the finite grid
\[
\mathcal{G}\ :=\ \mathcal{B}\times \mathcal{X}_{2}\times\cdots\times \mathcal{X}_{m}
\ \subseteq\ \mathbb{P}_{1}\times\cdots\times\mathbb{P}_{m},
\]
and pair this tail product with the anchors \(\boldsymbol{v}_{1},\ldots,\boldsymbol{v}_{m}\in \mathbb{P}_{0}\cong\mathbb{E}^{n_{0}}\) as needed.

\begin{claim}\label{claim:NotMono}
Fix \(h\in[m]\) and a tuple of fiber points
\(\boldsymbol{x}_{1},\ldots,\boldsymbol{x}_{h-1},\boldsymbol{x}_{h+1},\ldots,\boldsymbol{x}_{m}\),
where \(\boldsymbol{x}_{i}\in\mathbb{P}_{i}\cong\mathbb{E}^{n_{i}}\).
Identify \(\mathcal{B}\) with a fixed isometric copy \(\mathcal{B}^{(h)}\subseteq\mathbb{P}_{h}\)
(of the simplex constructed above) and, for brevity, still write \(\mathcal{B}\).
Then there exists \(\boldsymbol{b}\in \mathcal{B}\) such that the \(\varepsilon\)-sphere
\(S_{h}\bigl((\boldsymbol{v}_{h}, \boldsymbol{x}_{1}, \ldots,\boldsymbol{x}_{h-1},\boldsymbol{b},\boldsymbol{x}_{h+1},\ldots,\boldsymbol{x}_{m});\varepsilon\bigr)\)
in \(\mathbb{P}_{h}\) is not monochromatic under \(\chi\).
\end{claim}
\begin{poc}
Fix \(h\) and the tuple \(\boldsymbol{x}_{i}\) for \(i\neq h\) as above, and abbreviate
\[
C(\boldsymbol{b})\ :=\ (\boldsymbol{v}_{h}, \boldsymbol{x}_{1}, \ldots,\boldsymbol{x}_{h-1},\boldsymbol{b},\boldsymbol{x}_{h+1},\ldots,\boldsymbol{x}_{m})
\ \in\ \mathbb{P}.
\]
Consider the family of \(\varepsilon\)-spheres in the \(h\)-th fiber (Definition~\ref{def:eps-sphere})
\[
\mathcal{S}:=\ \Bigl\{\, S_{h}\bigl(C(\boldsymbol{b});\varepsilon\bigr) : \boldsymbol{b}\in\mathcal{B}\Bigr\}
\subseteq\mathbb{P}_{h}.
\]

\paragraph{Connectivity.}
We first claim that the union \(\bigcup_{S\in\mathcal{S}} S\) is connected in \(\mathbb{P}_{h}\).
Indeed, recall that \(\mathcal{B}\) consists of the rows of the block matrix
\(\begin{bmatrix}A_{1}&0\\ A_{2}&(\varepsilon/2)I\end{bmatrix}\).
Thus \(\mathcal{B}\) contains the \(d+1\) \emph{base points} \((\boldsymbol{w}_{i},\boldsymbol{0})\) with \(0\le i\le d,\)
and for each \(i\in[d]\) and \(j=1,\dots,m_{i}-1\), points of the form
\(\bigl(\tfrac{j}{m_{i}}\cdot\boldsymbol{w}_{i},\,\frac{\varepsilon}{2}\cdot\boldsymbol{e}_{k(i,j)}\bigr)\),
where \(\boldsymbol{e}_{k}\) denotes a standard basis vector in the lower block
(the precise index \(k(i,j)\) is irrelevant).
Two centers in \(\mathcal{B}\) that differ by increasing or decreasing a single step along the
\((i,j)\)-chain have distance
\[
\Bigl\|\,\tfrac{1}{m_{i}}\cdot\boldsymbol{w}_{i}\Bigr\|^{2}+\Bigl\|\frac{\varepsilon}{2}\cdot\boldsymbol{e}_{k}- \frac{\varepsilon}{2}\cdot\boldsymbol{e}_{k'}\Bigr\|^{2}
 \le \varepsilon_{i}^{2}+\Bigl(\tfrac{\varepsilon}{2}\Bigr)^{2}+\Bigl(\tfrac{\varepsilon}{2}\Bigr)^{2}
 < (2\varepsilon)^{2},
\]
because \(\varepsilon_{i}=\|\boldsymbol{w}_{i}\|/m_{i}<\varepsilon\).
Hence successive centers along each \((i,j)\)-chain are at distance \(< 2\varepsilon\).
Moreover, the endpoints of each chain connect to the base points:
\[
\operatorname{dist}\Bigl(\Bigl(\frac{1}{m_{i}}\cdot\boldsymbol{w}_{i},\frac{\varepsilon}{2}\cdot\boldsymbol{e}_{k}\Bigr),(\boldsymbol{0},\boldsymbol{0})\Bigr)
=\sqrt{\varepsilon_{i}^{2}+(\varepsilon/2)^{2}}<2\varepsilon,
\]
\[
\operatorname{dist}\Bigl(\Bigl(\frac{m_{i}-1}{m_{i}}\cdot\boldsymbol{w}_{i},\frac{\varepsilon}{2}\cdot\boldsymbol{e}_{k'}\Bigr),(\boldsymbol{w}_{i},\boldsymbol{0})\Bigr)
=\sqrt{\varepsilon_{i}^{2}+(\varepsilon/2)^{2}}<2\varepsilon.
\]
Thus the graph on \(\mathcal{B}\) that joins two centers when their distance is \(\le 2\varepsilon\)
is connected; since two radius-\(\varepsilon\) spheres intersect whenever their centers are
at distance \(\le 2\varepsilon\), the union \(\bigcup_{S\in\mathcal{S}} S\) is connected.

\paragraph{Propagation of color.}
Assume for contradiction that every sphere in \(\mathcal{S}\) is monochromatic under \(\chi\).
Whenever two monochromatic spheres intersect, they must have the same color on the
intersection, hence the same color on the whole spheres.
By connectivity, all spheres in \(\mathcal{S}\) have a common color.

In particular, for the \(d+1\) base centers \((\boldsymbol{w}_{j},\boldsymbol{0})\in\mathcal{B}\)
(\(0\le j\le d\)), the spheres
\(S_{h}\bigl(C((\boldsymbol{w}_{j},\boldsymbol{0}));\varepsilon\bigr)\) all have this color.
Fix any unit vector \(\boldsymbol{e}_{h}\in\mathbb{P}_{h}\).
Then, for each \(j\), the point
\[
\boldsymbol{q}_{j}\ :=\ \bigl(\boldsymbol{v}_{h}, \boldsymbol{x}_{1}, \ldots,\boldsymbol{x}_{h-1},
(\boldsymbol{w}_{j},\boldsymbol{0})+\varepsilon\boldsymbol{e}_{h},
\boldsymbol{x}_{h+1},\ldots,\boldsymbol{x}_{m}\bigr)
\]
lies on \(S_{h}\bigl(C((\boldsymbol{w}_{j},\boldsymbol{0}));\varepsilon\bigr)\) and hence has that color.
For any \(j,k\) we have
\[
\bigl\|\boldsymbol{q}_{j}-\boldsymbol{q}_{k}\bigr\|
=\bigl\|(\boldsymbol{w}_{j}-\boldsymbol{w}_{k},\,\boldsymbol{0})\bigr\|
=\|\boldsymbol{w}_{j}-\boldsymbol{w}_{k}\|,
\]
so \(\{\boldsymbol{q}_{0},\ldots,\boldsymbol{q}_{d}\}\) is congruent to \(\Delta\).
Thus we obtain a monochromatic copy of \(\Delta\) in \(\mathbb{P}\), contradicting the hypothesis.

Therefore, at least one sphere in \(\mathcal{S}\) is not monochromatic, that is, there exists \(\boldsymbol{b}\in\mathcal{B}\) with the stated property. This finishes the proof.
\end{poc}

\begin{claim}\label{claim:Iteratively}
    For any integer \(1\le i\le m\) and any fixed tail
\((\boldsymbol{x}_{i+1},\ldots,\boldsymbol{x}_{m}) \in \mathcal{X}_{i+1}\times\cdots\times \mathcal{X}_{m}\),
there exists a tuple \((\boldsymbol{x}_{1},\ldots,\boldsymbol{x}_{i}) \in \mathcal{B}\times \mathcal{X}_{2}\times\cdots\times \mathcal{X}_{i}\)
such that, for every \(1\le j\le i\), the \(\varepsilon\)-sphere
\(S_{j}\bigl((\boldsymbol{v}_{j},\boldsymbol{x}_{1},\ldots,\boldsymbol{x}_{m});\varepsilon\bigr)\) in \(\mathbb{P}_{j}\)
is not monochromatic under \(\chi\).
\end{claim}  
\begin{poc}
We proceed by induction on \(i\in[m]\). Let \(P(i)\) denote the statement of the claim.

\paragraph{Base case \(P(1)\).}
Fix any \((\boldsymbol{x}_{2},\ldots,\boldsymbol{x}_{m})\).
By Claim~\ref{claim:NotMono} with \(h=1\), there exists \(\boldsymbol{x}_{1}\in\mathcal{B}\) such that
\(S_{1}\bigl((\boldsymbol{v}_{1},\boldsymbol{x}_{1},\boldsymbol{x}_{2},\ldots,\boldsymbol{x}_{m});\varepsilon\bigr)\)
is not monochromatic. Hence \(P(1)\) holds.

\paragraph{Inductive step.}
Assume \(P(i)\) holds for some \(1\le i<m\), and fix a tail
\((\boldsymbol{x}_{i+2},\ldots,\boldsymbol{x}_{m})\in \mathcal{X}_{i+2}\times\cdots\times \mathcal{X}_{m}\).
For each choice of \(\boldsymbol{x}_{i+1}\in \mathcal{X}_{i+1}\), by \(P(i)\) (applied to the tail
\((\boldsymbol{x}_{i+1},\ldots,\boldsymbol{x}_{m})\)) there exists at least one witnessing tuple
\((\boldsymbol{x}_{1},\ldots,\boldsymbol{x}_{i})\in \mathcal{B}\times \mathcal{X}_{2}\times\cdots\times \mathcal{X}_{i}\)
such that, for every \(1\le j\le i\),
\[
S_{j}\bigl((\boldsymbol{v}_{j},\boldsymbol{x}_{1},\ldots,\boldsymbol{x}_{i},\boldsymbol{x}_{i+1},\ldots,\boldsymbol{x}_{m});\varepsilon\bigr)
\ \text{is not monochromatic.}
\]
Fix an arbitrary choice of one such witnessing tuple for each \(\boldsymbol{x}_{i+1}\),
and define an auxiliary coloring
\[
\gamma_{\boldsymbol{x}_{i+2},\ldots,\boldsymbol{x}_{m}}:\mathcal{X}_{i+1}\to
\mathcal{B}\times \mathcal{X}_{2}\times\cdots\times \mathcal{X}_{i}
\]
by letting \(\gamma_{\boldsymbol{x}_{i+2},\ldots,\boldsymbol{x}_{m}}(\boldsymbol{x}_{i+1})\) be the chosen witnessing tuple associated to \(\boldsymbol{x}_{i+1}\). By the pigeonhole principle, some color class \(\mathcal{C}\subseteq \mathcal{X}_{i+1}\) satisfies
\[
|\mathcal{C}|\ \ge\ \frac{|\mathcal{X}_{i+1}|}{\,|\mathcal{B}|\cdot\prod_{j=2}^{i}|\mathcal{X}_{j}|}
\ \ge\ \frac{|\mathcal{X}_{i+1}|}{\,|\mathcal{B}|\cdot\prod_{j=2}^{i} c(\mathcal{B})^{\,n_{j}}}
\ \ge\ |\mathcal{X}_{i+1}|\cdot(1+\delta(\mathcal{B}))^{-n_{i+1}},
\]
where we used \( |\mathcal{X}_j|\le c(\mathcal{B})^{\,n_j} \) for \(2\le j\le i\), and the choice of \(n_{i+1}\) ensuring
\( |\mathcal{B}| \cdot \prod_{j=2}^{i} c(\mathcal{B})^{\,n_j} \le (1+\delta(\mathcal{B}))^{\,n_{i+1}} \).
By the super-Ramsey property of \(\mathcal{X}_{i+1}\) for the pattern \(\mathcal{B}\),
the set \(\mathcal{C}\) contains a congruent copy \(\mathcal{B}'\subseteq \mathcal{X}_{i+1}\) of \(\mathcal{B}\).

By construction of \(\gamma_{\boldsymbol{x}_{i+2},\ldots,\boldsymbol{x}_{m}}\), every point of \(\mathcal{B}'\) is assigned the same
tuple \((\boldsymbol{x}_{1},\ldots,\boldsymbol{x}_{i})=\gamma_{\boldsymbol{x}_{i+2},\ldots,\boldsymbol{x}_{m}}(\boldsymbol{x}_{i+1})\).
Therefore the set
\[
\Bigl\{\,(\boldsymbol{x}_{1},\ldots,\boldsymbol{x}_{i},\boldsymbol{x}_{i+1},\boldsymbol{x}_{i+2},\ldots,\boldsymbol{x}_{m})
\ :\ \boldsymbol{x}_{i+1}\in \mathcal{B}' \Bigr\}
\]
is a congruent copy of \(\mathcal{B}\) sitting in the \((i+1)\)-st factor \(\mathbb{P}_{i+1}\), with all other
coordinates fixed. Applying Claim~\ref{claim:NotMono} with \(h=i+1\) (which depends only on the
isometry type of \(\mathcal{B}\)), we obtain some \(\boldsymbol{x}_{i+1}\in \mathcal{B}'\) such that
the \(\varepsilon\)-sphere
\[
S_{i+1}\bigl((\boldsymbol{v}_{i+1},\boldsymbol{x}_{1},\ldots,\boldsymbol{x}_{i},\boldsymbol{x}_{i+1},\ldots,\boldsymbol{x}_{m});\varepsilon\bigr)
\]
is not monochromatic. Together with the inductive hypothesis (which already guarantees the same
for \(j=1,\ldots,i\)), this yields \(P(i+1)\).

By induction, the claim holds for all \(i\in[m]\). This finishes the proof.

    \end{poc}

    By~\cref{claim:Iteratively}, \(P(m)\) holds. Taking the tail to be empty, we obtain a tuple
\((\boldsymbol{x}_{1},\ldots,\boldsymbol{x}_{m})\in\mathcal{G}=\mathcal{B}\times \mathcal{X}_{2}\times\cdots\times \mathcal{X}_{m}\)
such that, for every \(i\in[m]\), the \(\varepsilon\)-sphere
\[
S_{i}\bigl((\boldsymbol{v}_{i},\boldsymbol{x}_{1},\ldots,\boldsymbol{x}_{m});\varepsilon\bigr)\ \subseteq\ \mathbb{P}_{i}
\]
is not monochromatic under \(\chi\). This furnishes a single choice of fiber points that
works simultaneously for all anchors \((\boldsymbol{v}_{i},\boldsymbol{x}_{1},\ldots,\boldsymbol{x}_{m})\),
as required, and completes the proof.
\end{proof}

\subsection{An alternative proof of canonical Ramsey property for triangles}\label{subsection:NewproofTriangle}

Using \cref{thm:canonical-perturbation}, we can provide a new proof of the canonical property for triangles. For convenience, we restate the theorem we will prove.
\begin{theorem}\label{thm:triangle another proof}
   Let \(\mathcal{T}\) be a triangle and let \(r\) be a positive integer. There exists some \(n_0(\mathcal{T})\) such that for any $n\ge n_0(\mathcal{T})$
    \[
    \mathbb{E}^{n}\overset{r}{\rightarrow}(\mathcal{T};\mathcal{T})_{\mathrm{GR}}.
    \]  
\end{theorem}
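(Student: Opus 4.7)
My plan is to derive \cref{thm:triangle another proof} as a very short application of \cref{thm:canonical perturbation}: the key geometric idea is to calibrate three anchors so that each of their squared pairwise distances is exactly $2\varepsilon^{2}$ below the corresponding squared side length of~$\mathcal{T}$, so that two orthogonal fiber perturbations (one at each endpoint) restore the missing $2\varepsilon^{2}$ and every selection of one perturbed point per sphere yields a congruent copy of~$\mathcal{T}$. The no-monochromatic and no-rainbow hypotheses will then impose two incompatible constraints on the color sets of the three spheres.

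First, I reduce to $r \ge 3$: the cases $r=1$ (trivial) and $r=2$ (covered by~\cite{1975EGMB} already in~$\mathbb{E}^{3}$) are handled. Assume $r \ge 3$ and, for contradiction, that some $r$-coloring $\chi$ contains neither a monochromatic nor a rainbow copy of~$\mathcal{T}$. Denote the side lengths of~$\mathcal{T}$ by $a \le b \le c$, fix $\varepsilon > 0$ small enough that $2\varepsilon^{2} < a^{2}$ and the triple $\bigl(\sqrt{a^{2}-2\varepsilon^{2}}, \sqrt{b^{2}-2\varepsilon^{2}}, \sqrt{c^{2}-2\varepsilon^{2}}\bigr)$ still satisfies the triangle inequality (which holds by continuity once $\varepsilon$ is small), and place anchors $\bm{v}_{1}, \bm{v}_{2}, \bm{v}_{3} \in \mathbb{P}_{0} \cong \mathbb{E}^{2}$ realizing
\[
\|\bm{v}_{1}-\bm{v}_{2}\|^{2} = c^{2}-2\varepsilon^{2}, \qquad \|\bm{v}_{1}-\bm{v}_{3}\|^{2} = b^{2}-2\varepsilon^{2}, \qquad \|\bm{v}_{2}-\bm{v}_{3}\|^{2} = a^{2}-2\varepsilon^{2}.
\]

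Next, I apply \cref{thm:canonical perturbation} with $\Delta = \mathcal{T}$ and this anchor set, feeding in the no-monochromatic hypothesis; the output is a tuple of fiber dimensions $n_{1}, n_{2}, n_{3}$ (depending only on $\mathcal{T}$ and $\varepsilon$, hence independent of~$r$) and fiber points $\bm{x}_{1}, \bm{x}_{2}, \bm{x}_{3}$ such that, writing $\bm{p}_{i} = (\bm{v}_{i}, \bm{x}_{1}, \bm{x}_{2}, \bm{x}_{3})$, each $\varepsilon$-sphere $S_{i}(\bm{p}_{i}; \varepsilon) \subseteq \mathbb{P}_{i}$ is non-monochromatic under~$\chi$. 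Set $n_{0}(\mathcal{T}) := 2 + n_{1} + n_{2} + n_{3}$. The anchor calibration then delivers the crucial rigidity: for any $\bm{q}_{i} \in S_{i}(\bm{p}_{i}; \varepsilon)$, orthogonality of the fibers yields
\[
\|\bm{q}_{i} - \bm{q}_{j}\|^{2} = \|\bm{v}_{i}-\bm{v}_{j}\|^{2} + 2\varepsilon^{2} = d_{ij}^{2}, \qquad (d_{12}, d_{13}, d_{23}) = (c, b, a),
\]
so $\{\bm{q}_{1}, \bm{q}_{2}, \bm{q}_{3}\} \cong \mathcal{T}$ for every choice of one point per sphere.

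To finish, let $C_{i} := \chi\bigl(S_{i}(\bm{p}_{i}; \varepsilon)\bigr) \subseteq [r]$, so $|C_{i}| \ge 2$ by non-monochromaticity. If some color lay in $C_{1} \cap C_{2} \cap C_{3}$, selecting a point of that color in each sphere would exhibit a monochromatic copy of~$\mathcal{T}$, contradicting our assumption; hence $C_{1} \cap C_{2} \cap C_{3} = \emptyset$. Conversely, if $|C_{1} \cup C_{2} \cup C_{3}| \ge 3$, then Hall's marriage theorem (whose three conditions follow immediately from $|C_{i}| \ge 2$ and the assumed union size) produces distinct representatives $c_{i} \in C_{i}$, yielding a rainbow copy of~$\mathcal{T}$ and contradicting the no-rainbow hypothesis; hence $|C_{1} \cup C_{2} \cup C_{3}| \le 2$. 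Combined with $|C_{i}| \ge 2$, the only possibility is $C_{1} = C_{2} = C_{3}$, a common two-element set whose intersection is certainly nonempty---a contradiction. The only genuine subtlety of the whole argument is identifying the calibration $\|\bm{v}_{i}-\bm{v}_{j}\|^{2} = d_{ij}^{2} - 2\varepsilon^{2}$, which absorbs the two independent $\varepsilon^{2}$ contributions from fiber perturbations at $\bm{q}_{i}$ and $\bm{q}_{j}$; once this is in place, the rest is an elementary Hall-type dichotomy.
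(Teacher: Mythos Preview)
Your proposal is correct and follows essentially the same route as the paper's own proof: calibrate anchors to a contracted triangle with squared side lengths reduced by $2\varepsilon^{2}$, apply \cref{thm:canonical perturbation} to obtain three non-monochromatic $\varepsilon$-spheres whose orthogonal perturbations always recover a congruent copy of~$\mathcal{T}$, and then run the Hall-type dichotomy on the color sets $C_{1},C_{2},C_{3}$. The only cosmetic differences are your (unnecessary) reduction to $r\ge 3$ and your explicit framing of the endgame as a proof by contradiction; the paper invokes \cref{lem:simplex perturbation} for the contracted triangle whereas you cite continuity directly, but for a triangle these amount to the same thing.
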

Before the proof, we need an additional theorem about the simplex.

\begin{lemma}[\cite{1990JAMS}]\label{lem:simplex perturbation}
    Let $\mathcal{A}$ be a $(d-1)$-dimensional simplex with vertices $\boldsymbol{v}_{1},\dots,\boldsymbol{v}_{d}$, then there exist $\varepsilon'=\varepsilon'(\mathcal{A})>0$ such that for any $0<\varepsilon\le \varepsilon'$ there exists a $(d-1)$-dimensional simplex $\mathcal{A}_{\varepsilon}$ with vertices $\boldsymbol{v}_{1'},\ldots,\boldsymbol{v}_{d'}$ such that $\|\boldsymbol{v}_{i'}-\boldsymbol{v}_{j'}\|^2=\|\boldsymbol{v}_{i}-\boldsymbol{v}_{j}\|^2-2\varepsilon^2$ for every $1\le i<j\le d$.
\end{lemma}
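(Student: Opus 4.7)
The plan is to reduce the problem to a controlled perturbation of the Gram matrix of the original simplex, and then realize the perturbed Gram matrix as the Gram matrix of a new simplex. First, I will translate so that $\boldsymbol{v}_1$ sits at the origin and set $\boldsymbol{u}_i:=\boldsymbol{v}_{i+1}-\boldsymbol{v}_1$ for $i=1,\dots,d-1$. Since $\mathcal{A}$ is a non-degenerate $(d-1)$-dimensional simplex, the vectors $\boldsymbol{u}_1,\dots,\boldsymbol{u}_{d-1}$ form a basis of a $(d-1)$-dimensional subspace, so the Gram matrix $G:=(\boldsymbol{u}_i\cdot\boldsymbol{u}_j)_{i,j=1}^{d-1}$ is symmetric and strictly positive definite.

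Next, I encode the target conditions as a matrix equation. Analogously setting $\boldsymbol{u}_{i'}:=\boldsymbol{v}_{(i+1)'}-\boldsymbol{v}_{1'}$, the constraints $\|\boldsymbol{v}_{1'}-\boldsymbol{v}_{(k+1)'}\|^2=\|\boldsymbol{v}_1-\boldsymbol{v}_{k+1}\|^2-2\varepsilon^2$ translate into $\|\boldsymbol{u}_{k'}\|^2=\|\boldsymbol{u}_k\|^2-2\varepsilon^2$; combining these with the remaining distance constraints and expanding via the polarization identity forces $\boldsymbol{u}_{k'}\cdot\boldsymbol{u}_{\ell'}=\boldsymbol{u}_k\cdot\boldsymbol{u}_\ell-\varepsilon^2$ for every $k\neq\ell$. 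Collecting these relations, the target Gram matrix becomes
\[
G'\;=\;G\;-\;\varepsilon^{2}\bigl(I+J\bigr),
\]
where $I$ and $J$ denote the identity and all-ones matrices of size $d-1$.

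The main step is to verify that $G'$ remains positive definite for sufficiently small $\varepsilon$, which is precisely what a simplex realization demands. Since $J$ has eigenvalues $d-1$ (with multiplicity $1$) and $0$ (with multiplicity $d-2$), the perturbation $I+J$ has largest eigenvalue $d$; Weyl's inequality then yields
\[
\lambda_{\min}(G')\;\ge\;\lambda_{\min}(G)\;-\;\varepsilon^{2}d.
\]
Choosing $\varepsilon'(\mathcal{A}):=\sqrt{\lambda_{\min}(G)/(2d)}$ therefore ensures $G'\succ 0$ for every $0<\varepsilon\le\varepsilon'$. Since any symmetric positive definite $(d-1)\times(d-1)$ matrix is a Gram matrix (via Cholesky factorization $G'=LL^{\top}$ with nonsingular $L$), the columns of $L$ furnish linearly independent vectors $\boldsymbol{u}_{1'},\dots,\boldsymbol{u}_{(d-1)'}\in\mathbb{R}^{d-1}$ realizing the prescribed inner products. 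Setting $\boldsymbol{v}_{1'}:=\boldsymbol{0}$ and $\boldsymbol{v}_{(i+1)'}:=\boldsymbol{u}_{i'}$ then produces the desired $(d-1)$-dimensional simplex $\mathcal{A}_{\varepsilon}$.

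The only conceptual obstacle is showing that the $\binom{d}{2}$ distance constraints are jointly realizable in Euclidean space (one cannot simply prescribe arbitrary squared distances and expect an isometric embedding). The Gram-matrix reformulation collapses this realizability question into the positivity of the explicit one-parameter family $G-\varepsilon^{2}(I+J)$, and the Weyl eigenvalue estimate makes that positivity transparent for every sufficiently small $\varepsilon$, with an explicit threshold depending only on the spectrum of the original Gram matrix of $\mathcal{A}$.
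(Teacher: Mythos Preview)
Your argument is correct. The paper does not actually prove this lemma; it merely records, in the remark immediately following the statement, that the existence of \(\varepsilon'(\mathcal{A})\) is established in the cited Frankl--R\"odl paper. Your Gram-matrix reduction is therefore a self-contained proof where the paper gives none: translating so that one vertex is the origin, checking that the prescribed distance shifts correspond exactly to subtracting \(\varepsilon^{2}(I+J)\) from the Gram matrix, and invoking Weyl's inequality to keep positive definiteness for small \(\varepsilon\) is a clean and explicit route. It even supplies a concrete threshold \(\varepsilon'(\mathcal{A})=\sqrt{\lambda_{\min}(G)/(2d)}\), which the paper does not attempt to quantify.

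One cosmetic slip: with the standard Cholesky factorization \(G'=LL^{\top}\) (\(L\) lower triangular), it is the \emph{rows} of \(L\), not the columns, whose pairwise inner products recover \(G'\); equivalently, write \(G'=R^{\top}R\) and take the columns of \(R\). This does not affect the substance of the argument, since all you need is that a symmetric positive-definite matrix is the Gram matrix of some linearly independent family in \(\mathbb{R}^{d-1}\), which you state correctly.
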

\begin{rmk}
  The existence of \(\varepsilon'(\mathcal{A})\) was shown in~\cite{1990JAMS}.
\end{rmk}

Using \cref{thm:canonical-perturbation} and~\cref{lem:simplex perturbation}, we give a different proof of the canonical Ramsey property for triangles.

\begin{proof}[Proof of \cref{thm:triangle another proof}]
Let a triangle \(\mathcal{T}\) have side lengths \(a,b,c>0\). By~\cref{lem:simplex perturbation}, there exists \(\varepsilon>0\) and a triangle \(\mathcal{T}_{\varepsilon}\) with side lengths
\[
a':=\sqrt{a^{2}-2\varepsilon^{2}},\quad
b':=\sqrt{b^{2}-2\varepsilon^{2}},\quad
c':=\sqrt{c^{2}-2\varepsilon^{2}} .
\]
Fix an embedding of \(\mathcal{T}_{\varepsilon}\) with vertices
\(\mathcal{X}=\{\boldsymbol{v}_{1},\boldsymbol{v}_{2},\boldsymbol{v}_{3}\}\subseteq \mathbb{P}_{0}\cong\mathbb{E}^{2}\).

Apply \cref{thm:canonical-perturbation} with \(m=3\) and anchors \(\mathcal{X}\) selected above.
We obtain integers \(n_{1},n_{2},n_{3}\) such that for every \(r\)-coloring \(\chi\) of the orthogonal product
\[
\mathbb{P}\ :=\ \mathbb{P}_{0}\times\mathbb{P}_{1}\times\mathbb{P}_{2}\times\mathbb{P}_{3},
\qquad \mathbb{P}_{0}\cong\mathbb{E}^{2},\ \ \mathbb{P}_{i}\cong\mathbb{E}^{n_{i}}\ (i=1,2,3),
\]
that contains no monochromatic copy of \(\mathcal{T}\), there exist points
\(\boldsymbol{x}_{i}\in\mathbb{P}_{i}\) so that for each \(i\in\{1,2,3\}\) the \(\varepsilon\)-sphere in the \(i\)-th fiber around
\[
\boldsymbol{p}_{i}:=(\boldsymbol{v}_{i},\boldsymbol{x}_{1},\boldsymbol{x}_{2},\boldsymbol{x}_{3})\in\mathbb{P}
\]
is not monochromatic.

Notice that any triple of points $(\boldsymbol{y}_1,\boldsymbol{y}_2,\boldsymbol{y}_3)$ such that $\boldsymbol{y}_i\in S_i(\boldsymbol{p}_i;\varepsilon)$ can be expressed as
\[
\boldsymbol{y}_{1}
:=\bigl(\boldsymbol{v}_{1}, \boldsymbol{x}_{1}+\varepsilon\boldsymbol{u}_{1},\boldsymbol{x}_{2},\boldsymbol{x}_{3}\bigr)
\in S_{1}(\boldsymbol{p}_{1};\varepsilon),
\]
\[
\boldsymbol{y}_{2}
:=\bigl(\boldsymbol{v}_{2}, \boldsymbol{x}_{1},\boldsymbol{x}_{2}+\varepsilon\boldsymbol{u}_{2},\boldsymbol{x}_{3}\bigr)
\in S_{2}(\boldsymbol{p}_{2};\varepsilon),
\]
\[
\boldsymbol{y}_{3}
:=\bigl(\boldsymbol{v}_{3}, \boldsymbol{x}_{1},\boldsymbol{x}_{2},\boldsymbol{x}_{3}+\varepsilon\boldsymbol{u}_{3}\bigr)
\in S_{3}(\boldsymbol{p}_{3};\varepsilon),
\]
where \(\boldsymbol{u}_{i}\in\mathbb{P}_{i}\) are unit vectors. Since the summands lie in pairwise orthogonal subspaces, there are no cross terms, hence for distinct \(i,j\in [3]\) we have
\[
\|\boldsymbol{y}_{i}-\boldsymbol{y}_{j}\|^{2}
=\|\boldsymbol{v}_{i}-\boldsymbol{v}_{j}\|^{2}
+\|\varepsilon\boldsymbol{u}_{i}\|^{2}
+\|\varepsilon\boldsymbol{u}_{j}\|^{2}
=\|\boldsymbol{v}_{i}-\boldsymbol{v}_{j}\|^{2}+2\varepsilon^{2}.
\]
By construction of \(\mathcal{T}_{\varepsilon}\), the base distances satisfy
\(\|\boldsymbol{v}_{1}-\boldsymbol{v}_{2}\|=a'\), \(\|\boldsymbol{v}_{2}-\boldsymbol{v}_{3}\|=b'\),
\(\|\boldsymbol{v}_{1}-\boldsymbol{v}_{3}\|=c'\), with
\((a')^{2}+2\varepsilon^{2}=a^{2}\), \((b')^{2}+2\varepsilon^{2}=b^{2}\),
\((c')^{2}+2\varepsilon^{2}=c^{2}\).
Therefore
\[
\|\boldsymbol{y}_{1}-\boldsymbol{y}_{2}\|=a,\qquad
\|\boldsymbol{y}_{2}-\boldsymbol{y}_{3}\|=b,\qquad
\|\boldsymbol{y}_{1}-\boldsymbol{y}_{3}\|=c,
\]
which means any such triple \(\boldsymbol{y}_{1},\boldsymbol{y}_{2},\boldsymbol{y}_{3}\) forms a copy of \(\mathcal{T}\).

For \(i\in [3]\), let \(C_{i}\) be the set of colors that actually appear on the sphere
\(S_{i}(\boldsymbol{p}_{i};\varepsilon)\); by~\cref{thm:canonical-perturbation}, \(|C_{i}|\ge 2\).
 It remains to choose $c_i\in C_i$ that are all the same or distinct. This is because there exists \(\boldsymbol{y}_i\in S_i(\boldsymbol{p}_i;\varepsilon)\) for $i\in [3]$ such that $\boldsymbol{y}_i$ has color $c_i$. Then \(\boldsymbol{y}_{1},\boldsymbol{y}_{2},\boldsymbol{y}_{3}\) forms a copy of $\mathcal{T}$ that is either monochromatic or rainbow.
If \(C_{1}\cap C_{2}\cap C_{3}\neq\emptyset\), take a common color on all three spheres to obtain a
monochromatic copy of \(\mathcal{T}\).
Otherwise the triple intersection is empty. Since each \(|C_{i}|\ge 2\), the union
\(\,C_{1}\cup C_{2}\cup C_{3}\,\) must have size at least \(3\). Hence the family \(\{C_{1},C_{2},C_{3}\}\) satisfies Hall’s condition
\(|\bigcup_{i\in I}C_{i}|\ge |I|\) for every \(I\subseteq\{1,2,3\}\), so there exist distinct
representatives \(c_{1}\in C_{1},c_{2}\in C_{2},c_{3}\in C_{3}\), yielding a rainbow copy of \(\mathcal{T}\).

Therefore every \(r\)-coloring of \(\mathbb{P}\) contains a monochromatic or a rainbow copy of \(\mathcal{T}\). Setting \(n_{0}(\mathcal{T})=2+n_{1}+n_{2}+n_{3}\) completes the proof.
\end{proof}

\subsection{Canonical property for certain \(3\)-dimensional simplices via perturbation theorem}
Using \cref{thm:canonical-perturbation}, we prove a canonical Ramsey theorem for
3-dimensional simplices (tetrahedra) under a natural geometric separation condition.

\begin{theorem}\label{thm:conditioned-simplex-proof}
Let \(\mathcal{T}\) be a tetrahedron such that the largest height of \(\mathcal{T}\)
exceeds the smallest circumradius among its four triangular faces.
Then for every \(r\) there exists \(n_0=n_0(\mathcal{T})\) such that for all \(n\ge n_0\),
\[
\mathbb{E}^{n}\ \overset{r}{\longrightarrow}\ (\mathcal{T};\mathcal{T})_{\mathrm{GR}}.
\]
\end{theorem}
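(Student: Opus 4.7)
The plan is to apply Theorem~\ref{thm:canonical-perturbation} with $\Delta=\mathcal{T}$ using three anchors obtained from the face $F$ of $\mathcal{T}$ of smallest circumradius $R$, and then to exploit the height hypothesis $h>R$ by iterating the same perturbation construction inside an auxiliary apex sphere. Assume for contradiction that some $r$-coloring $\chi$ of $\mathbb{E}^{n}$ contains neither a monochromatic nor a rainbow copy of $\mathcal{T}$. Let $F$ be a face of $\mathcal{T}$ realizing the smallest circumradius $R$, and let $w$ be a vertex of $\mathcal{T}$ whose height above the opposite face is the largest value $h$; after a harmless relabeling we may assume $w$ is opposite $F$, so that $h>R$.

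The first move is to apply Lemma~\ref{lem:simplex perturbation} to $F$ to obtain a perturbed triangle $F_{\varepsilon}\subseteq\mathbb{P}_{0}\cong\mathbb{E}^{2}$ whose squared side lengths are those of $F$ diminished by $2\varepsilon^{2}$. Then invoke Theorem~\ref{thm:canonical-perturbation} with $\Delta=\mathcal{T}$ and $m=3$: since $\chi$ has no monochromatic copy of $\mathcal{T}$, we obtain integers $n_{1},n_{2},n_{3}$, fiber points $\boldsymbol{x}_{i}\in\mathbb{P}_{i}$, and anchor points $\boldsymbol{p}_{i}=(\boldsymbol{v}_{i},\boldsymbol{x}_{1},\boldsymbol{x}_{2},\boldsymbol{x}_{3})$ such that each $\varepsilon$-sphere $S_{i}(\boldsymbol{p}_{i};\varepsilon)$ is non-monochromatic. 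Exactly as in the alternative triangle argument of Subsection~\ref{subsection:NewproofTriangle}, any selection $\boldsymbol{y}_{i}\in S_{i}(\boldsymbol{p}_{i};\varepsilon)$ satisfies $\|\boldsymbol{y}_{i}-\boldsymbol{y}_{j}\|^{2}=\|\boldsymbol{v}_{i}-\boldsymbol{v}_{j}\|^{2}+2\varepsilon^{2}$, so that $\{\boldsymbol{y}_{1},\boldsymbol{y}_{2},\boldsymbol{y}_{3}\}$ is a congruent copy of $F$. Writing $C_{i}$ for the color set of $S_{i}(\boldsymbol{p}_{i};\varepsilon)$, each $|C_{i}|\ge 2$, so Hall's theorem on three sets each of size at least two produces either a common color $c\in\bigcap C_{i}$, giving a monochromatic copy of $F$, or a system of distinct representatives $(c_{1},c_{2},c_{3})$, giving a rainbow copy of $F$.

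In either situation, the locus of fourth-vertex positions completing $\{\boldsymbol{y}_{1},\boldsymbol{y}_{2},\boldsymbol{y}_{3}\}$ to a congruent copy of $\mathcal{T}$ is an apex sphere $\mathcal{A}$ of radius exactly $h$ and dimension $N-3$ inside the ambient product, where $N=n_{0}+n_{1}+n_{2}+n_{3}$. In the monochromatic-$F$ branch, avoiding a monochromatic $\mathcal{T}$ forces $\chi|_{\mathcal{A}}$ to omit color $c$, leaving a palette of size at most $r-1$; in the rainbow-$F$ branch, avoiding a rainbow $\mathcal{T}$ forces $\chi|_{\mathcal{A}}\subseteq\{c_{1},c_{2},c_{3}\}$, bounding the palette by $3$ independently of $r$. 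The height inequality $h>R$ now enters crucially: since $\mathcal{A}$ is a sphere of radius strictly greater than $R$, it contains planar circles of radius $R$ and hence congruent copies of $F$, which lets us re-execute the canonical-perturbation construction inside $\mathcal{A}$ and iterate the preceding analysis. In the rainbow branch, the bounded $3$-color palette on successive apex spheres, combined with Theorem~\ref{thm:simplex super ramsey} applied to $\Delta=\mathcal{T}$, should rapidly produce a monochromatic copy of $\mathcal{T}$, contradicting the initial assumption. In the monochromatic branch, the palette strictly decreases with each iteration, so the recursion terminates at a base case in which a direct application of Frankl--R\"{o}dl closes the argument.

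The hardest part will be to make this iteration rigorous. At each stage one must equip the apex sphere $\mathcal{A}$ with an orthogonal product structure compatible with Theorem~\ref{thm:canonical-perturbation}, which requires choosing the initial dimensions $n_{0},n_{1},n_{2},n_{3}$ large enough that successive apex spheres retain sufficient Euclidean ``room'' for the super-Ramsey constructions underpinning Theorem~\ref{thm:simplex super ramsey}, and one must carefully track dimensions and palette sizes across the iterations to guarantee termination. The strict inequality $h>R$ is essential throughout: it is precisely what ensures that a congruent copy of the face $F$ persists inside every apex sphere encountered, keeping the canonical-perturbation engine alive and ultimately forcing the desired monochromatic or rainbow copy of $\mathcal{T}$.
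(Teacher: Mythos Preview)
Your proposal has two genuine gaps. First, the relabeling is not harmless: the hypothesis says the largest height $H_{\max}$ exceeds the smallest face circumradius $\rho_{\min}$, but the vertex realizing $H_{\max}$ need not be opposite the face $F$ realizing $\rho_{\min}$. If you build the apex sphere above $F$, its radius is the height of the vertex opposite $F$, which may be strictly smaller than $H_{\max}$, so you lose the very inequality needed to embed a copy of $F$ on $\mathcal{A}$. (The paper instead takes as base the face opposite the tallest vertex, so the apex sphere genuinely has radius $H_{\max}$, and then places on it a copy of the \emph{other} face of circumradius $\rho_{\min}$; these are two distinct faces in general.)

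Second, and more fundamentally, the iteration cannot be executed. Theorem~\ref{thm:canonical-perturbation} requires an ambient orthogonal product of full Euclidean spaces; an apex sphere $\mathcal{A}$ is a curved submanifold and carries no such product structure, so you cannot ``re-execute the canonical-perturbation construction inside $\mathcal{A}$''. Moreover, the fiber dimensions $n_1,n_2,n_3$ must be fixed before the coloring is revealed, whereas your branching into monochromatic/rainbow cases, and hence the entire subsequent recursion, depends on the coloring. Finally, in the rainbow branch your appeal to Frankl--R\"odl on the $3$-colored $\mathcal{A}$ would at best yield a monochromatic copy of $\mathcal{T}$ \emph{on} $\mathcal{A}$, but $\mathcal{T}$ need not embed in $\mathcal{A}$ at all since its circumradius can exceed $h$. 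The paper sidesteps all of this by applying Theorem~\ref{thm:canonical-perturbation} only \emph{once}, with $m=|\mathcal{X}|$ anchors forming a large engineered configuration $\mathcal{X}$ built from many linked copies of a contracted tetrahedron $\mathcal{T}'$; every copy of $\mathcal{T}'$ in $\mathcal{X}$ then lifts to a copy of $\mathcal{T}$ with one vertex on each of four non-monochromatic $\varepsilon$-spheres, and a purely combinatorial classification of the resulting palette quadruples (types $\mathfrak{A}$ and $\mathfrak{B}$) together with propagation rules along the links forces the contradiction. The height--circumradius hypothesis is invoked exactly once, to assemble a specific seven-point sub-configuration of $\mathcal{X}$ (Proposition~\ref{claim:type A}), not through any recursion on apex spheres.
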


\paragraph{Overview of the proof of~\cref{thm:conditioned-simplex-proof}.} The proof is slightly complicated, especially the construction of some auxiliary configurations. Therefore we plan to first sketch the proof here.
We work under the geometric separation hypothesis that the largest height is larger than the smallest facial circumradius. The engine is the canonical perturbation~\cref{thm:canonical-perturbation}: after embedding into a suitable orthogonal product, we may choose one anchor in each fiber so that every prescribed \(\varepsilon\)-sphere about that anchor is not monochromatic. This yields, for any finite seed configuration \(X\), a palette \(C_{\bm{v}}\subseteq[r]\) with \(|C_v|\ge2\) attached to each \(\bm{v}\in X\), acting as a controlled local obstruction.

We first replace the target tetrahedron by a small contracted copy \(\mathcal{T}'\) that preserves the height–circumradius gap. Two geometric building blocks are then established. Proposition~\ref{claim:two congruent simplices} shows that a short hinge with small opening extends along either side to a copy of \(\mathcal{T}'\). Proposition~\ref{claim:type A} creates a dense micro-configuration: from one face we pick three apices and a fourth above them, realizing four tightly linked copies of \(\mathcal{T}'\). In particular, the existence of such configuration relies on the condition on the circumradius and height in this theorem.

Using these ingredients, we assemble a finite configuration \(\mathcal{X}\) consisting of numerous copies of \(\mathcal{T}'\) glued along polygonal links. The resulting \(\mathcal{X}\) is large and structurally intricate, which is, however, the linchpin of our argument. For this reason we prioritize a complete specification of \(\mathcal{X}\) over brevity and readability. Once \(\mathcal{X}\) is fixed, applying \cref{thm:canonical-perturbation} to \(\mathcal{X}\) equips every anchor with a nontrivial palette. The classification lemma (\cref{lem:classification}) reduces palette quadruples on a tetrahedron to types \(\mathfrak{A}\) or \(\mathfrak{B}\). A five-point transmission rule and a link-consistency principle (\cref{claim:type B}, \cref{claim:linked simplices}) propagate type along links; a tailored sub-configuration \(\mathcal{X}_1\) eliminates \(\mathfrak{B}\) (\cref{claim:no type B}); a second gluing then contradicts \(\mathfrak{A}\) (\cref{claim:no type A}). Hence a monochromatic or rainbow copy of \(\mathcal{T}\) must occur in high enough dimension, proving \cref{thm:conditioned-simplex-proof}.

We then provide the full proof.
\begin{proof}[Proof of \Cref{thm:conditioned-simplex-proof}]
Let \(\mathcal{T}\) be the target \(3\)-dimensional simplex such that the largest height \(H_{\max}(\mathcal{T})\) of \(\mathcal{T}\)
exceeds the smallest circumradius \(\rho_{\min}(\mathcal{T})\) among its four triangular faces. Let \(\chi\) be an \(r\)-coloring of \(\mathbb{E}^{n}\) such that there is neither monochromatic nor rainbow copy of \(\mathcal{T}.\)

By \cref{lem:simplex perturbation}, there exists a contracted tetrahedron $\mathcal{T}_{\varepsilon}$ for $\varepsilon\in(0,\varepsilon_0].$ Since circumradius and a certain height of a tetrahedron are all continuous functions of side length, there exists a proper $\varepsilon$ such that $\mathcal{T}_{\varepsilon}$ still satisfies the condition in \cref{thm:conditioned-simplex-proof}, that is, $H_{\max}(\mathcal{T}_{\varepsilon})>\rho_{\min}(\mathcal{T}_{\varepsilon})$. Denote this tetrahedron $\mathcal{T}':=\mathcal{T}_{\varepsilon}:=\{\bm{a}_{1},\bm{a}_{2},\bm{a}_{3},\bm{a}_{4}\}$, where $H_{\max}(\mathcal{T}')>\rho_{\min}(\mathcal{T}')$. 

{We now need to very carefully select a set \(\mathcal{X}\) of size \(m\) to apply~\cref{thm:canonical perturbation}. The way of selection is complicated, we need a series of important geometric facts. The first fact shows that in $\mathbb{E}^4$, two simplices congruent to $\mathcal{T}'$ can be glued together along a face freely.
\begin{prop}\label{prop:two congruent simplices}
    Let $\bm{a}\bm{b}\bm{c}\bm{d}$ and $\bm{a}'\bm{b}\bm{c}\bm{d}$ be two copies of $\mathcal{T}'$ in $\mathbb{E}^4,$ with $\|\bm{a}'-\bm{b}\|=\|\bm{a}-\bm{b}\|=\|\bm{a}_1-\bm{a}_2\|.$ Let $\theta$ be the induced angle between the vector $\bm{a}_2-\bm{a}_1$ and the plane spanned by \(\{\boldsymbol{a}_2,\boldsymbol{a}_3,\boldsymbol{a}_4\}\). Then $\angle\bm{a}\bm{b}\bm{a}'$ can attain any value in $(0,2\theta].$
\end{prop}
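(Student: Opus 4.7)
The plan is to exploit the extra rotational degree of freedom that $\mathbb{E}^{4}$ provides once the face $\bm{b}\bm{c}\bm{d}$ is fixed. In $\mathbb{E}^{3}$, fixing the base determines the apex up to a reflection, but in $\mathbb{E}^{4}$ the admissible apex positions sweep a full circle; I will parametrize this circle by a single rotation angle $\varphi$ and read off $\angle\bm{a}\bm{b}\bm{a}'$ as a closed-form function of $\varphi$ and $\theta$.

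Place $\bm{b}$ at the origin, let $\Pi\subseteq\mathbb{E}^{4}$ be the affine $2$-plane containing $\bm{b},\bm{c},\bm{d}$, and let $\Pi^{\perp}$ be its orthogonal complement (also a $2$-plane in $\mathbb{E}^{4}$). Let $\bm{F}\in\Pi$ denote the foot of the perpendicular from $\bm{a}$ onto $\Pi$, and set $h:=\|\bm{a}-\bm{F}\|$ (the height of $\bm{a}\bm{b}\bm{c}\bm{d}$ at apex $\bm{a}$) and $\ell:=\|\bm{a}-\bm{b}\|=\|\bm{a}_{1}-\bm{a}_{2}\|$. By the very definition of $\theta$, we have $\sin\theta=h/\ell$, since $\theta$ is the angle between the edge vector and its orthogonal projection onto the plane of the opposite face. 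Now let $R_{\varphi}$ be the rotation of $\Pi^{\perp}$ by angle $\varphi$ about $\bm{F}$, extended by the identity on $\Pi$; this is a global isometry of $\mathbb{E}^{4}$ fixing $\bm{b},\bm{c},\bm{d}$ pointwise, so $\bm{a}':=R_{\varphi}(\bm{a})$ automatically yields $\bm{a}'\bm{b}\bm{c}\bm{d}\cong\mathcal{T}'$ for every $\varphi\in[0,2\pi)$.

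Because $\bm{a}$ and $\bm{a}'$ lie on a common circle of radius $h$ centered at $\bm{F}$ in $\Pi^{\perp}$ at angular separation $\varphi$, a direct computation inside $\Pi^{\perp}$ gives $\|\bm{a}-\bm{a}'\|=2h\sin(\varphi/2)$. On the other hand, the triangle $\bm{a}\bm{b}\bm{a}'$ is isosceles with legs of length $\ell$, so the law of cosines (in the form of the chord–half-angle identity) gives $\|\bm{a}-\bm{a}'\|=2\ell\sin\bigl(\tfrac12\angle\bm{a}\bm{b}\bm{a}'\bigr)$. Equating these two expressions and substituting $h/\ell=\sin\theta$ yields
\[
\sin\!\bigl(\tfrac{1}{2}\angle\bm{a}\bm{b}\bm{a}'\bigr)=\sin\theta\cdot\sin(\varphi/2).
\]
As $\varphi$ varies continuously over $(0,\pi]$, the right-hand side sweeps $(0,\sin\theta]$ bijectively, and hence by the intermediate value theorem $\angle\bm{a}\bm{b}\bm{a}'$ attains every value in $(0,2\theta]$, with the endpoint $2\theta$ realized at $\varphi=\pi$.

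The only delicate point worth verifying is that $R_{\varphi}$ preserves the distances from $\bm{a}'$ to \emph{every} base vertex (not merely to $\bm{b}$), so that $\bm{a}'\bm{b}\bm{c}\bm{d}$ is genuinely a copy of $\mathcal{T}'$; this is automatic since $R_{\varphi}$ is a global isometry that fixes every point of the plane $\Pi\supseteq\{\bm{b},\bm{c},\bm{d}\}$. I do not foresee any substantive obstacle: once the two orthogonal $2$-planes $\Pi$ and $\Pi^{\perp}$ are identified, the whole argument collapses to a single law-of-cosines computation on a circle in $\Pi^{\perp}$.
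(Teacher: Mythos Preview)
Your proof is correct and follows essentially the same approach as the paper: fix the base triangle in a coordinate $2$-plane of $\mathbb{E}^4$, parametrize the circle of admissible apex positions by a rotation angle, and compute the apex angle at $\bm{b}$ as a function of that parameter. The paper carries out the computation via $\cos\angle\bm{a}\bm{b}\bm{a}'$ using dot products, while you use the chord length and the half-angle sine identity; these are trigonometrically equivalent and lead to the same range $(0,2\theta]$. One minor wording issue: the rotation $R_\varphi$ is really a rotation in the affine $2$-plane $\bm{F}+\Pi^\perp$ (equivalently, a rotation of the linear subspace $\Pi^\perp$ about its origin, extended by the identity on $\Pi$), since $\bm{F}\in\Pi$ need not lie in $\Pi^\perp$; this does not affect the argument.
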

\begin{proof}[Proof of \cref{prop:two congruent simplices}]
Let $\bm{h}$ be the projection of $\bm{a}_1$ on the plane spanned by $\{\bm{a}_2,\bm{a}_3,\bm{a}_4\}$, with $d=\|\bm{a}-\bm{h}\|$; and let $\theta$ be the induced angle between the vector $\bm{a}_2-\bm{a}_1$ and the plane spanned by \(\{\boldsymbol{a}_2,\boldsymbol{a}_3,\boldsymbol{a}_4\}\). Fix a triangle $\triangle\bm{bcd}$ on the plane $\{(x,y,z,w):z=w=0\}\subseteq\mathbb{E}^4$ congruent to $\triangle \bm{a}_2\bm{a}_3\bm{a}_4,$ with $\bm{h}'=(h_1,h_2,0,0)$ at the corresponding location of $\bm{h}$. The set of points $\bm{a}$ such that $\bm{a}\bm{b}\bm{c}\bm{d}$ forming a congruent copy of $\mathcal{T}'$ should satisfy two properties: i) $\|\bm{a}-\bm{h}\|=d$, and ii) $\bm{a}-\bm{h}$ is perpendicular to the plane $\{(x,y,z,w):z=w=0\}.$ To be specific,
\[\{\bm{a}:\bm{a}\bm{b}\bm{c}\bm{d}\cong\mathcal{T}'\}=\{(h_1,h_2,d\cos\gamma,d\sin\gamma):\gamma\in[0,2\pi)\}.\]

Pick two distinct points $\bm{a}=(h_1,h_2,d\cos\gamma,d\sin\gamma)$ and $\bm{a}'=(h_1,h_2,d\cos\gamma',d\sin\gamma')$ from the above set. We calculate $\angle\bm{a}\bm{b}\bm{a}':$
  \[\cos\angle \bm{a}\bm{b}\bm{a}'=\frac{(\bm{b}-\bm{a})\cdot (\bm{b}-\bm{a}')}{\left\|\bm{b}-\bm{a}\right\|\left\|\bm{b}-\bm{a}'\right\|}=\frac{\|\bm{b}-\bm{h}\|^2+(\bm{h}-\bm{a})\cdot(\bm{h}-\bm{a}')}{\|\bm{b}-\bm{h}\|^2+\|\bm{h}-\bm{a}\|^2}=\frac{\|\bm{b}-\bm{h}\|^2+\|\bm{h}-\bm{a}\|^2\cos(\gamma-\gamma')}{\|\bm{b}-\bm{h}\|^2+\|\bm{h}-\bm{a}\|^2}.\]
Since $\bm{a}\neq\bm{a}'$, we have $\gamma\neq\gamma'$ and $\cos(\gamma-\gamma')$ takes every value in $[-1,1)$. Observe that 
    \[\cos\theta=\frac{\|\bm{b}-\bm{h}\|}{\|\bm{a}-\bm{b}\|}=\frac{\|\bm{b}-\bm{h}\|}{\sqrt{\|\bm{a}-\bm{h}\|^2+\|\bm{b}-\bm{h}\|^2}},\]
    and thus \[\cos2\theta=2\cos^2\theta-1=\frac{\|\bm{b}-\bm{h}\|^2-\|\bm{a}-\bm{h}\|^2}{{\|\bm{a}-\bm{h}\|^2+\|\bm{b}-\bm{h}\|^2}}.\]
It follows that $\angle\bm{a}\bm{b}\bm{a}'$ takes every value in $(0,2\theta]$.
\end{proof}

One can immediately obtain the following corollary of~\cref{prop:two congruent simplices} for future application.
\begin{cor}\label{claim:two congruent simplices}
     Let $\boldsymbol{a},\boldsymbol{b},\boldsymbol{a}'\in \mathbb{E}^{4}$ satisfy $\|\boldsymbol{a}-\boldsymbol{b}\|=\|\boldsymbol{b}-\boldsymbol{a}'\|=\|\bm{a}_1-\bm{a}_2\|$, and $\angle \bm{a}\bm{b}\bm{a}'/2$ be smaller than \(\theta\). 
     Then there exist two points $\bm{c}$ and $\bm{d}$ such that both $\bm{a}\bm{b}\bm{c}\bm{d}$ and $\bm{a}'\bm{b}\bm{c}\bm{d}$ are congruent copies of $\mathcal{T'}$. 
\end{cor}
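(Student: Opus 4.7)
The plan is to reduce the corollary to \cref{prop:two congruent simplices} by a transport-of-structure argument in $\mathbb{E}^4$. The proposition is a \emph{realization} statement: as the model configuration varies, the hinge angle at $\bm{b}$ sweeps the entire interval $(0,2\theta]$. The corollary's hypothesis $\angle\bm{a}\bm{b}\bm{a}'/2<\theta$, equivalent to $\angle\bm{a}\bm{b}\bm{a}'<2\theta$, places the prescribed angle strictly inside that realizable range. Hence it suffices to construct a model pair of $\mathcal{T}'$-copies with the correct hinge angle and then transport it by an ambient isometry of $\mathbb{E}^4$ onto the given triple $\bm{a},\bm{b},\bm{a}'$.

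Concretely, I would first invoke \cref{prop:two congruent simplices} to produce points $\bm{a}_0,\bm{b}_0,\bm{c}_0,\bm{d}_0,\bm{a}_0'\in\mathbb{E}^4$ such that both $\bm{a}_0\bm{b}_0\bm{c}_0\bm{d}_0$ and $\bm{a}_0'\bm{b}_0\bm{c}_0\bm{d}_0$ are congruent copies of $\mathcal{T}'$ and $\angle\bm{a}_0\bm{b}_0\bm{a}_0'=\angle\bm{a}\bm{b}\bm{a}'$. Because $\|\bm{a}_0-\bm{b}_0\|=\|\bm{b}_0-\bm{a}_0'\|=\|\bm{a}_1-\bm{a}_2\|=\|\bm{a}-\bm{b}\|=\|\bm{b}-\bm{a}'\|$, the triangles $\bm{a}_0\bm{b}_0\bm{a}_0'$ and $\bm{a}\bm{b}\bm{a}'$ share two edges of the same length meeting at their central vertex with the same included angle, so by the SAS criterion they are congruent. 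Since $\mathbb{E}^4$ is more than large enough to contain two arbitrary congruent planar triangles, this triangle congruence extends to a global rigid motion $\phi$ of $\mathbb{E}^4$ sending $\bm{a}_0,\bm{b}_0,\bm{a}_0'$ to $\bm{a},\bm{b},\bm{a}'$, respectively.

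Finally, I would set $\bm{c}:=\phi(\bm{c}_0)$ and $\bm{d}:=\phi(\bm{d}_0)$. Because $\phi$ preserves all pairwise distances, $\bm{a}\bm{b}\bm{c}\bm{d}=\phi(\bm{a}_0\bm{b}_0\bm{c}_0\bm{d}_0)$ and $\bm{a}'\bm{b}\bm{c}\bm{d}=\phi(\bm{a}_0'\bm{b}_0\bm{c}_0\bm{d}_0)$ are congruent copies of $\mathcal{T}'$, which is the desired conclusion. The argument presents no substantive obstacle; the only items worth verifying are that the strict inequality $\angle\bm{a}\bm{b}\bm{a}'<2\theta$ indeed falls inside the range $(0,2\theta]$ supplied by \cref{prop:two congruent simplices}, and that the degenerate case $\bm{a}=\bm{a}'$ (if not implicitly excluded) is handled trivially by taking any single copy of $\mathcal{T}'$ extending the edge $\bm{a}\bm{b}$.
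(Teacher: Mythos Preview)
Your proposal is correct and is precisely the natural ``immediate'' derivation the paper has in mind: the paper states the corollary without proof, simply noting it follows at once from \cref{prop:two congruent simplices}, and your transport-by-isometry argument is exactly how one cashes out that immediacy.
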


The next fact shows that in $\mathbb{E}^5$, distinct copies of \(\mathcal{T}'\) can be arranged comparatively densely, which relies on the condition on the circumradius and height, $\rho_{\min}(\mathcal{T}')< H_{\max}(\mathcal{T}').$
\begin{prop}\label{claim:type A}
   For any \(n\ge 5\), there exist points \(\bm{z},\bm{y}_1,\bm{y}_2,\bm{y}_3,\bm{x}_1,\bm{x}_2,\bm{x}_3\in\mathbb{E}^{n}\) such that \(\bm{z}\bm{y}_1\bm{y}_2\bm{y}_3\) and, for each \(i\in\{1,2,3\}\), \(\bm{y}_i\bm{x}_1\bm{x}_2\bm{x}_3\) are tetrahedra, all congruent to \(\mathcal{T}'\).
\end{prop}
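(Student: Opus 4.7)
The plan is to build the seven-point configuration inside $\mathbb{E}^5$, so that the case of general $n\ge 5$ follows by isometric embedding into a coordinate subspace. First I would single out a face $F^\star$ of $\mathcal{T}'$ whose opposite vertex realizes the maximal height $h^\star:=H_{\max}(\mathcal{T}')$, together with a (possibly different) face $F^\circ$ achieving the minimum facial circumradius $\rho^\circ:=\rho_{\min}(\mathcal{T}')$; by hypothesis $h^\star>\rho^\circ$. Embed $F^\star$ as $\triangle\bm{x}_1\bm{x}_2\bm{x}_3$ inside the coordinate $2$-plane $\{(u_1,u_2,0,0,0)\}\subseteq\mathbb{E}^5$, and let $\bm{p}$ denote the orthogonal projection onto this plane of the vertex of $\mathcal{T}'$ opposite $F^\star$ under the chosen congruence. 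Then the locus of points $\bm{y}\in\mathbb{E}^5$ with $\bm{y}\bm{x}_1\bm{x}_2\bm{x}_3\cong\mathcal{T}'$ (in the designated correspondence) is precisely the $2$-sphere $\mathcal{S}$ of radius $h^\star$ centered at $\bm{p}$ lying in the $3$-dimensional perpendicular affine space $\bm{p}+\{(0,0,u_3,u_4,u_5)\}$.

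Next I would locate $\bm{y}_1,\bm{y}_2,\bm{y}_3$ on $\mathcal{S}$ so that $\triangle\bm{y}_1\bm{y}_2\bm{y}_3\cong F^\circ$. For this, I choose a $2$-plane $\Pi$ inside $\bm{p}+\{(0,0,u_3,u_4,u_5)\}$ at distance $\sqrt{(h^\star)^2-(\rho^\circ)^2}$ from $\bm{p}$; this plane intersects $\mathcal{S}$ in a circle of radius exactly $\rho^\circ$, on which an isometric copy of $F^\circ$ can be inscribed, producing the three apices. By construction each tetrahedron $\bm{y}_i\bm{x}_1\bm{x}_2\bm{x}_3$ is congruent to $\mathcal{T}'$. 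Finally, since $\triangle\bm{y}_1\bm{y}_2\bm{y}_3\cong F^\circ$ is itself a face of $\mathcal{T}'$, the point $\bm{z}$ is determined up to a choice of perpendicular direction: it must sit at the prescribed foot inside $\Pi$ and at perpendicular distance equal to the height of $\mathcal{T}'$ opposite $F^\circ$ from $\Pi$. The orthogonal complement of $\Pi$ in $\mathbb{E}^5$ is $3$-dimensional, so there is a full $2$-sphere's worth of admissible $\bm{z}$, any of which completes the configuration.

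The only substantive step is the middle one, and it is precisely where the theorem's hypothesis is consumed: a triangle of circumradius $\rho^\circ$ is inscribable on the $2$-sphere $\mathcal{S}$ of radius $h^\star$ if and only if $\rho^\circ\le h^\star$, which is guaranteed by $h^\star>\rho^\circ$. A minor bookkeeping concern is aligning the vertex labels of $\mathcal{T}'$ with those of $F^\star,F^\circ$ and with the points $\bm{x}_i,\bm{y}_i$ so that all four congruences hold simultaneously; this is resolved by relabeling and, if necessary, by reflecting $\mathcal{T}'$. The construction is genuinely $5$-dimensional: in $\mathbb{E}^4$ the locus $\mathcal{S}$ would degenerate to a $1$-sphere, on which the only inscribable triangle has circumradius exactly $h^\star$, failing the strict gap $h^\star>\rho^\circ$. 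This is why the proposition requires $n\ge 5$.
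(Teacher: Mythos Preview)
Your proposal is correct and follows essentially the same approach as the paper: fix $\triangle\bm{x}_1\bm{x}_2\bm{x}_3$ as the face supporting $H_{\max}(\mathcal{T}')$, observe that the apex locus is a sphere of radius $H_{\max}(\mathcal{T}')$ in the orthogonal space, inscribe the minimum-circumradius face on that sphere using $\rho_{\min}(\mathcal{T}')<H_{\max}(\mathcal{T}')$, and then pick $\bm{z}$ from the nonempty apex locus over $\triangle\bm{y}_1\bm{y}_2\bm{y}_3$. Your added detail on the intermediate plane $\Pi$, the labeling bookkeeping, and the remark explaining why $n\ge 5$ is needed are all accurate elaborations of the paper's terser argument.
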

\begin{proof}[Proof of~\cref{claim:type A}]
    Let $\bm{x}_1,\bm{x}_2,\bm{x}_3$ form the triangle congruent to the face of $\mathcal{T}'$ supporting the largest height $H_{\max}(\mathcal{T}')$. Then $\{\bm{y}:\bm{x}_1\bm{x}_2\bm{x}_3\bm{y}\cong \mathcal{T}'\}$ is an $(n-2)$-dimensional sphere of radius $H_{\max}(\mathcal{T}').$ Hence, there exist three points $\bm{y}_1,\bm{y}_2,\bm{y}_3$ on it, forming the face of minimum circumradius of $\mathcal{T}'$ as $H_{\max}(\mathcal{T}')>\rho_{\min}(\mathcal{T}')$. Finally, fix \(\bm{y}_1,\bm{y}_2,\bm{y}_3\). The locus of points \(\bm{z}\) with \(\bm{y}_1\bm{y}_2\bm{y}_3\bm{z}\cong\mathcal{T}'\) is again an \((n-3)\)-dimensional sphere (by the same intersection-of-spheres argument), which is nonempty when \(n\ge 5\). Pick any \(\bm{z}\) on this sphere. Then \(\bm{z}\bm{y}_1\bm{y}_2\bm{y}_3\) and each \(\bm{y}_i\bm{x}_1\bm{x}_2\bm{x}_3\) are congruent to \(\mathcal{T}'\), as required.
\end{proof}

\begin{figure}[htbp]
\centering
\begin{minipage}{0.45\textwidth}
\centering
\begin{tikzpicture}
 \coordinate (a') at (0,-0.5);
 \filldraw[black] (a') circle (1pt) node [below left]
{$\bm{a}$}; 
 \coordinate (b') at (4.5,4);
 \filldraw[black] (b') circle (1pt) node [right]
{$\bm{b}$};
 \coordinate (c') at (2.7,-3.2);
 \filldraw[black] (c') circle (1pt) node [below]
{$\bm{c}$};
 \coordinate (d') at (4,-0.23);
 \filldraw[black] (d') circle (1pt) node [above right]
{$\bm{d}$};
\coordinate (h) at (3.55,-0.6);
 \coordinate (a'') at (6.3,-1.1);
 \filldraw[black] (a'') circle (1pt) node [below]
{$\bm{a'}$};
\draw[blue,dashed] (3.6,-0.5) ellipse (3.6 and 0.85);
\draw[black,very thick](c')--(a')--(b')--(c')--(d')--(b');
\draw[black,very thick,dashed](a')--(d');
\draw[black,very thick,dashed](a'')--(b');
\draw[black,very thick,dashed](a'')--(c');
\draw[black,very thick,dashed](a'')--(d');
 
\end{tikzpicture}
\caption{The configuration constructed in Prop 4.8}
\end{minipage}
\hfill
\begin{minipage}{0.45\textwidth}
\centering

\begin{tikzpicture}
 \coordinate (x_1) at (0,0);
 \filldraw[black] (x_1) circle (1pt) node [below]
{$\bm{x}_1$};
\coordinate (x_2) at (3,-0.5);
 \filldraw[black] (x_2) circle (1pt) node [below]
{$\bm{x}_2$};
\coordinate (x_3) at (4.5,1);
 \filldraw[black] (x_3) circle (1pt) node [below right]
{$\bm{x}_3$};
\coordinate (y_1) at (-0.1,3.3);
 \filldraw[black] (y_1) circle (1pt) node [below left]
{$\bm{y}_1$};
\coordinate (y_2) at (1.3,5.1);
 \filldraw[black] (y_2) circle (1pt) node [above right]
{$\bm{y}_2$};
\coordinate (y_3) at (4.2,3.9);
 \filldraw[black] (y_3) circle (1pt) node [below right]
{$\bm{y}_3$};
\coordinate (z) at (1.5,7.5);
 \filldraw[black] (z) circle (1pt) node [right]
{$\bm{z}$};
\draw[black,very thick] (x_1)--(x_2)--(x_3);
\draw[black,very thick,dashed](x_1)--(x_3);
\draw[blue,very thick,dashed] (x_1)--(y_1);
\draw[blue,very thick,dashed] (x_2)--(y_1);
\draw[blue,very thick,dashed] (x_3)--(y_1);
\draw[red,very thick,dashed] (x_1)--(y_2);
\draw[red,very thick,dashed] (x_2)--(y_2);
\draw[red,very thick,dashed] (x_3)--(y_2);
\draw[green,very thick,dashed] (x_1)--(y_3);
\draw[green,very thick,dashed] (x_2)--(y_3);
\draw[green,very thick,dashed] (x_3)--(y_3);
\draw[black,very thick] (y_1)--(z)--(y_3)--(y_1);
\draw[black,very thick,dashed] (y_2)--(y_1);
\draw[black,very thick,dashed] (y_2)--(y_3);
\draw[black,very thick,dashed] (y_2)--(z);

\end{tikzpicture}
\caption{The configuration constructed in Prop 4.9}
\end{minipage}
\end{figure}

The pivotal ingredient of the proof is the construction of \(\mathcal{X}\). Since the selection is technically intricate, we describe it in full detail.

\begin{defn}[Configuration \(\mathcal{X}\)]\label{defn:construction}
The construction of \(\mathcal{X}\) mixes various components. Given a tetrahedron $\mathcal{T}'$ formed by four points $\bm{a}_1,\bm{a}_2,\bm{a}_3,\bm{a}_4$, let $\bm{a}_1\bm{a}_2\bm{a}_3$ form the face supporting the largest height.

\paragraph{Link between two simplices.}
    Firstly, we introduce the \emph{link} between two congruent simplices. Let $\bm{a}\bm{b}\bm{c}\bm{d}\simeq \bm{a}'\bm{b}'\bm{c}'\bm{d}'\simeq\mathcal{T}'.$ The construction is conducted in three steps.
    \begin{enumerate}
        \item Connect $\bm{b}$ and $\bm{b}'$ by a polygonal path whose edges have length $\|\bm{a}-\bm{b}\|,$ namely $\bm{b}=\bm{x}_1,\bm{a}=\bm{x}_2,\bm{x}_3,\dots,\bm{x}_{k-2},\bm{a}'=\bm{x}_{k-1},\bm{b}'=\bm{x}_k$. For each \(2\le i\le k-1\), choose an integer $k_i\geq 1$ and insert a sequence of points $\bm{y}_{i,1}=\bm{x}_{i-1},\bm{y}_{i,2},\dots,\bm{y}_{i,k_i}=\bm{x}_{i+1}$ such that $\|\bm{y}_{i,j}-\bm{x}_i\|=\|\bm{a}-\bm{b}\|$ and each angle $\angle \bm{y}_{i,j-1}\bm{x}_{i}\bm{y}_{i,j}$ satisfies the condition of \cref{claim:two congruent simplices}.  
        \item Apply \cref{claim:two congruent simplices} to every three points $\bm{y}_{i,j},\bm{x}_i,\bm{y}_{i,j+1}$ to obtain two new points $\bm{z}_{i,j,1}$ and $\bm{z}_{i,j,2}$ such that the polygons $\bm{y}_{i,j}\bm{x}_i\bm{z}_{i,j,1}\bm{z}_{i,j,2}$ and  $\bm{y}_{i,j+1}\bm{x}_i\bm{z}_{i,j,1}\bm{z}_{i,j,2}$ are congruent to $\mathcal{T}'.$
        \item Repeat the above steps to connect $\bm{d}$ and $\bm{d}'$ using the step length $\|\bm{c}-\bm{d}\|.$ This produces another polygonal path 
    \[\bm{c}=\bm{x}'_1,\bm{d}=\bm{x}'_2,\bm{x}'_3,\dots,\bm{x}'_{\ell-1}=\bm{d}',\bm{x}'_{\ell}=\bm{c},\]
    together with subdivided points $\bm{y}_{i,j}'$ and vertices
    \(\bm z'_{i,j,1},\bm z'_{i,j,2}\) that again generate congruent copies of \(\mathcal{T}'\). 
    \end{enumerate}
    The link of $\bm{abcd}$ and $\bm{a}'\bm{b}'\bm{c}'\bm{d}'$ consist of all the points produced above, that is,
    \begin{align*}
    \mathcal{L}(\bm{abcd},\bm{a}'\bm{b}'\bm{c}'\bm{d}')=&\{\bm{a},\bm{b},\bm{c},\bm{d},\bm{a}',\bm{b}',\bm{c}',\bm{d}'\}\cup\{\bm{x}_i\}\cup\{\bm{y}_{i,j}\}\cup\{\bm{z}_{i,j,1},\bm{z}_{i,j,2}\}\\ &\cup\{\bm{x}_i'\}\cup\{\bm{y}_{i,j}'\}\cup\{\bm{z}_{i,j,1}',\bm{z}_{i,j,2}'\},
    \end{align*}
where we omit all of the ranges of indices above for clarity.

\paragraph{Gluing the links.}
    Let $\bm{a},\bm{b},\bm{c},\bm{d}$ form a congruent copy of $\mathcal{T}'$. Secondly, we construct a useful configuration $\mathcal{X}_1$ based on the link of two simplices. This includes five steps.
    
    \begin{enumerate}
        \item 
Connect \(\bm b\) and \(\bm c\) by a polygonal path each of whose edges has length \(\|\bm a-\bm b\|\).
Similarly, connect \(\bm c\) to \(\bm d\) and \(\bm d\) to \(\bm a\).
This yields an equilateral polygon
\(\bm u_{1}\ldots \bm u_{\ell_{1}}\,\bm u_{\ell_{1}+1}\ldots \bm u_{\ell_{2}}\bm u_{\ell_{2}+1}\ldots \bm u_{\ell_{3}}\),
where $\bm{a}=\bm{u}_1,\bm{b}=\bm{u}_2,\bm{c}=\bm{u}_{\ell_1},\bm{d}=\bm{u}_{\ell_2}$, \(0\le \ell_{1}<\ell_{2}<\ell_{3}\) and every consecutive pair of vertices in the sequence is at distance \(\|\bm a-\bm b\|\).

        \item For each \(i\in [\ell_{3}]\), we then choose an integer $m_i\geq 1$ and insert a sequence of points $\bm{u}_{i-1}=\bm{z}_{i,1},\bm{z}_{i,2},\ldots,\bm{z}_{i,m_{i}-1},\bm{z}_{i,m_i}=\bm{u}_{i+1}$ such that $\|\bm{z}_{i,k}-\bm{u}_i\|=\|\bm{a}-\bm{b}\|$ and each angle $\angle \bm{z}_{i,j-1}\bm{u}_i\bm{z}_{i,j}$ satisfies the condition of \cref{claim:two congruent simplices}. 
        \item For each $i\in [\ell_{3}],j\in [m_{i}-1]$, we then apply \cref{claim:two congruent simplices} to each triple of points $\{\bm{z}_{i,j},\bm{u}_i,\bm{z}_{i,j+1}\}$ to obtain new sequence of points $\bm{x}_{i,j,1}$ and $\bm{x}_{i,j,2}$ such that the polygons $\bm{z}_{i,j}\bm{u}_i\bm{x}_{i,j,1}\bm{x}_{i,j,2}$ and  $\bm{z}_{i,j+1}\bm{u}_i\bm{x}_{i,j,1}\bm{x}_{i,j,2}$ are congruent to $\mathcal{T}'.$
        \item Let \(\phi:=\sum_{i=1}^{\ell_3}2(m_i-1)\) be the total number of tetrahedra congruent to $\mathcal{T}'$ appeared after the above steps.
        Label them as $\mathcal{T}'_1,\dots,\mathcal{T}'_\phi$. Link $\mathcal{L}(\mathcal{T}'_i,\mathcal{T}'_{i+1})$ for $i\in[\phi-1]$.
        \item Repeat the above four steps with the roles of \((\bm a,\bm b,\bm c,\bm d)\) replaced by
    \((\bm c,\bm d,\bm a,\bm b)\), using the step length \(\|\bm c-\bm d\|\).
    This produces another closed equilateral polygon
    \[
    \bm c,\ \bm d,\ \bm u'_{1},\dots,\bm u'_{k_{1}},\ \bm a,\
    \bm u'_{k_{1}+1},\dots,\bm u'_{k_{2}},\ \bm b,\
    \bm u'_{k_{2}+1},\dots,\bm u'_{k_{3}},\ \bm c,
    \]
    together with subdivided points \(\bm z'_{i,j}\), points
    \(\bm x'_{i,j,1},\bm x'_{i,j,2}\) that again generate congruent copies of \(\mathcal{T}'\), namely $\mathcal{T}_{\phi+1}',\dots,\mathcal{T}'_{\varphi}$, and the consecutive links $\mathcal{L}(\mathcal{T}'_i,\mathcal{T}'_{i+1}).$
    \end{enumerate}
    Let \(\mathcal{X}_{1}\) consist of all of the points produced from the above five steps, formally we define 
    \begin{align*}
        \mathcal{X}_{1}(\bm{a},\bm{b},\bm{c},\bm{d}):=&\{\bm{a},\bm{b},\bm{c},\bm{d}\}\cup \{\boldsymbol{u}_{i}\}\cup\{\bm{z}_{i,j}\}\cup\{\bm{x}_{i,j,1},\bm{x}_{i,j,2}\}\cup\mathcal{L}(\mathcal{T}'_i,\mathcal{T}'_{i+1})\\
&         \cup \{\boldsymbol{u}'_{i}\}\cup\{\bm{z}'_{i,j}\}\cup\{\bm{x}'_{i,j,1},\bm{x}'_{i,j,2}\},
    \end{align*}
   where we also omit the ranges of indices for clarity. 

\paragraph{Completing the construction of \(\mathcal{X}\).}   
    
     Recall that tetrahedron $\mathcal{T}'$ is formed by four points $\bm{a}_1,\bm{a}_2,\bm{a}_3,\bm{a}_4$, let $\bm{a}_1\bm{a}_2\bm{a}_3$ form the face supporting the largest height. Notice that there exists some point \(\bm{x}\) such that $\bm{a_1}\bm{a}_2\bm{x}\bm{a}_3$ is a parallelogram, we then set  $d:=\|\bm{a}_1-\bm{x}\|$. Connect $\bm{a}_1$ and $\bm{a}_2$ by a polygonal path $\bm{b}_0=\bm{a}_1,\bm{b}_1,\dots,\bm{b}_k,\bm{a}_2=\bm{b}_{k+1}$ such that $\|\bm{b}_i-\bm{b}_{i+1}\|=d$. Extend each edge $\bm{b}_i\bm{b}_{i+1}$ to a parallelogram $\bm{b}_i\bm{c}_{i,1}\bm{b}_{i+1}\bm{c}_{i,2}$ congruent to $\bm{a}_1\bm{a}_2\bm{x}\bm{a}_3$.
     
    For each \(0\le i\le k\), notice that the triangles $\bm{b}_i\bm{c}_{i,1}\bm{c}_{i,2}$ and $\bm{b}_{i+1}\bm{c}_{i,1}\bm{c}_{i,2}$ are all congruent to the face $\bm{a}_1\bm{a}_2\bm{a}_3$ supporting the largest height. Then applying~\cref{claim:type A}, we can find $ \bm{z}_i,\bm{y}_{i,1},\bm{y}_{i,2},\bm{y}_{i,3}$ and $\bm{z}_i',\bm{y}_{i,1}',\bm{y}_{i,2}',\bm{y}_{i,3}'$ such that all of the following \(4\)-tuples form a copy of \(\mathcal{T}'\):
    \begin{enumerate}
        \item $\{\bm{b}_i,\bm{c}_{i,1},\bm{c}_{i,2},\bm{y}_{i,j}\}_{j\in [3]}$;
        \item $\{\bm{b}_{i+1},\bm{c}_{i,1},\bm{c}_{i,2},\bm{y}_{i,j}\}_{j\in [3]}$;
        \item $\{\bm{z}_i,\bm{y}_{i,1},\bm{y}_{i,2},\bm{y}_{i,3}\}$;
        \item \(\{\bm{z}_i',\bm{y}_{i,1}',\bm{y}_{i,2}',\bm{y}_{i,3}'\}\).
    \end{enumerate}
Fix all of the above selected points. Now for each copy of \(\mathcal{T}'\) with points \(\bm{a}_{1},\bm{a}_{2},\bm{a}_{3},\bm{a}_{4}\), we define the following configuration based on $\mathcal{T}$ with respect to the first two points \(\{\bm{a}_{1},\bm{a}_{2}\}\) as 
     \begin{align*}
         \mathcal{K}(\mathcal{T}';\bm{a}_1,\bm{a}_2)=&\{\bm{a}_i\}\cup\{\bm{b}_i\}\cup\{\bm{c}_{i,1},\bm{c}_{i,2}\}\cup\{\bm{z}_i,\bm{z}_i'\}\cup\{y_{i,j},y_{i,j}'\}\cup \{\mathcal{X}_1(\bm{z}_i,\bm{y}_{i,1},\bm{y}_{i,2},\bm{y}_{i,3})\}\\ &\cup \{\mathcal{X}_1(\bm{z}_i',\bm{y}_{i,1}',\bm{y}_{i,2}',\bm{y}_{i,3}')\}
         \cup \{\mathcal{X}_1(\bm{y}_{i,k},\bm{b}_{i},\bm{c}_{i,1},\bm{c}_{i,2})\}\cup \{\mathcal{X}_1(\bm{y}_{i,k}',\bm{b}_{i+1},\bm{c}_{i,1},\bm{c}_{i,2})\},
     \end{align*}
     where we omit the ranges for indices again for readability. We also define \(\mathcal{K}(\mathcal{T}';\bm{a}_2,\bm{a}_3)\) similarly. Finally, take \[\mathcal{X}=\mathcal{K}(\mathcal{T}';\bm{a}_1,\bm{a}_2)\cup\mathcal{K}(\mathcal{T}';\bm{a}_2,\bm{a}_3)\cup \mathcal{X}_1(\bm{a}_1,\bm{a}_2,\bm{a}_3,\bm{a}_4).\]

\end{defn}

}

Recall that \(\chi\) is an \(r\)-coloring of \(\mathbb{E}^{n}\) such that there is neither monochromatic nor rainbow copy of \(\mathcal{T}.\) The following series of claims together yield a contradiction. From now on, we use the notation in \cref{defn:construction} unless stated otherwise. 

\begin{claim}
    There exists a copy of \(\mathcal{T}'\subseteq \mathcal{X}.\) 
\end{claim}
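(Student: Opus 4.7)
The plan is to simply unpack Definition~\ref{defn:construction} and exhibit a four-point subset of $\mathcal{X}$ congruent to $\mathcal{T}'$; the claim is essentially a sanity check that our elaborate construction has not accidentally destroyed its seed tetrahedron. First I would point out that $\mathcal{X}$, as defined, already contains the building block $\mathcal{X}_1(\bm{a}_1,\bm{a}_2,\bm{a}_3,\bm{a}_4)$. By the definition of $\mathcal{X}_1(\bm{a},\bm{b},\bm{c},\bm{d})$, the four anchor points $\{\bm{a},\bm{b},\bm{c},\bm{d}\}$ are required to form a copy of $\mathcal{T}'$, and the construction explicitly lists them as members of $\mathcal{X}_1$. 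Applying this to the anchors $(\bm{a}_1,\bm{a}_2,\bm{a}_3,\bm{a}_4)$, which by the choice of $\mathcal{T}'$ are themselves the vertices of a copy of $\mathcal{T}'$, yields the desired subset.

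If one wants more than one witness, I would alternatively point to the tetrahedra generated by each application of Corollary~\ref{claim:two congruent simplices}: every triple $\{\bm{y}_{i,j},\bm{x}_i,\bm{y}_{i,j+1}\}$ is completed to two copies $\bm{y}_{i,j}\bm{x}_i\bm{z}_{i,j,1}\bm{z}_{i,j,2}$ and $\bm{y}_{i,j+1}\bm{x}_i\bm{z}_{i,j,1}\bm{z}_{i,j,2}$ of $\mathcal{T}'$, all of whose vertices are placed inside $\mathcal{X}$. Similarly, the calls to Proposition~\ref{claim:type A} used inside each $\mathcal{K}(\mathcal{T}';\bm{a}_i,\bm{a}_{i+1})$ furnish copies $\bm{z}_i\bm{y}_{i,1}\bm{y}_{i,2}\bm{y}_{i,3}$ and $\bm{y}_{i,k}\bm{b}_i\bm{c}_{i,1}\bm{c}_{i,2}$ congruent to $\mathcal{T}'$, each again lying inside $\mathcal{X}$.

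Since the claim is really a corollary of the construction, the main obstacle is purely notational: one should verify that each piece of $\mathcal{X}$ (the subdivided polygonal paths, the gluing steps that invoke Corollary~\ref{claim:two congruent simplices}, the parallelogram extensions $\bm{b}_i\bm{c}_{i,1}\bm{b}_{i+1}\bm{c}_{i,2}$, the uses of Proposition~\ref{claim:type A}, and the iterated $\mathcal{X}_1$ modules) was constructed precisely so that every generated tetrahedron is congruent to $\mathcal{T}'$. This is exactly what Corollary~\ref{claim:two congruent simplices} and Proposition~\ref{claim:type A} guarantee under the angle bounds prescribed inside Definition~\ref{defn:construction}. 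Thus any of the constructed copies serves as the required copy of $\mathcal{T}'\subseteq \mathcal{X}$, and the claim follows.
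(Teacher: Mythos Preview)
Your proof is correct and follows exactly the same line as the paper: it observes that $\mathcal{X}$ contains $\mathcal{X}_1(\bm{a}_1,\bm{a}_2,\bm{a}_3,\bm{a}_4)$, whose definition explicitly includes the anchors $\{\bm{a}_1,\bm{a}_2,\bm{a}_3,\bm{a}_4\}$, and these form $\mathcal{T}'$ by construction. The additional witnesses you list are accurate but unnecessary for the claim.
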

\begin{poc}
    By the definition of $\mathcal{X},$ we have $\bm{a}_1,\bm{a}_2,\bm{a}_3,\bm{a}_4\in \mathcal{X}$ and $\bm{a}_1\bm{a}_2\bm{a}_3\bm{a}_4\cong\mathcal{T}'.$
\end{poc}

Temporarily relabel the points in $\mathcal{X}$ as $\{\bm{v}_1,\dots,\bm{v}_m\}$ to apply~\cref{thm:canonical perturbation} with \(m=|\mathcal{X}|\) and \(\mathbb{P}:=\mathbb{P}_{0}\times\mathbb{P}_{1}\times\cdots\times\mathbb{P}_{m}\cong \mathbb{E}^{n_{0}}\times\mathbb{E}^{n_{1}}\times\cdots\times\mathbb{E}^{n_{m}}\). Let $C_{i}$ be the set of colors appearing in the sphere $S_{i}(\bm{p}_{i},\varepsilon).$ By~\cref{thm:canonical perturbation}, we have \(|C_{i}|\ge 2\) for each \(i\in [m].\)

Let \(\{\bm{v}_{i_1},\bm{v}_{i_2},\bm{v}_{i_3},\bm{v}_{i_4}\}\subseteq \mathcal{X}\) form a copy of \(\mathcal{T}'\), where \(\boldsymbol{v}_{i_{j}}\in\mathbb{P}_{i_{j}}\).

\begin{claim}\label{lem:classification}
Let \(C_{i_1},C_{i_2},C_{i_3},C_{i_4}\) with \(|C_{i_j}|\ge 2\) for all \(j\).
Assume there is no rainbow choice of distinct representatives from
\(\{C_{i_1},C_{i_2},C_{i_3},C_{i_4}\}\),
and there is no color contained in all four sets.
Then, up to relabeling the colors and the indices \(i_1,i_2,i_3,i_4\), the quadruple
\((C_{i_1},C_{i_2},C_{i_3},C_{i_4})\) is in exactly one of the following two forms:
\begin{enumerate}
    \item Type \(\mathfrak{A}\): \(C_{i_1}=\{1,2\}\), \(C_{i_2}=\{2,3\}\), \(C_{i_3}=\{1,3\}\), and \(C_{i_4}\subseteq\{1,2,3\}\);
    \item Type \(\mathfrak{B}\): \(C_{i_1}=C_{i_2}=C_{i_3}=\{1,2\}\) and \(C_{i_4}\supseteq\{3,4\}\).
\end{enumerate}
\end{claim}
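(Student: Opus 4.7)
The plan is to recast the two hypotheses as a Hall-theoretic question about the family $\{C_{i_1},C_{i_2},C_{i_3},C_{i_4}\}$ and then run a short combinatorial case split. The ``no rainbow choice of distinct representatives'' assumption is precisely the non-existence of a system of distinct representatives (SDR); by Hall's marriage theorem this forces some $I\subseteq\{1,2,3,4\}$ with $\bigl|\bigcup_{j\in I} C_{i_j}\bigr|<|I|$. Since $|C_{i_j}|\ge 2$ for every $j$, any two of the sets have union of size at least $2$, so only $|I|=3$ or $|I|=4$ can arise. I expect these to produce Types $\mathfrak{B}$ and $\mathfrak{A}$ respectively.

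For the case $|I|=3$, three of the sets have union of size at most $2$ while each has size at least $2$; hence they all coincide with a single $2$-element set. I would relabel the colors so that this set is $\{1,2\}$, and permute indices so that $C_{i_1}=C_{i_2}=C_{i_3}=\{1,2\}$. The empty-intersection hypothesis then reads $C_{i_4}\cap\{1,2\}=\emptyset$, and since $|C_{i_4}|\ge 2$, two of its elements can be relabeled as $3$ and $4$. This is exactly Type $\mathfrak{B}$.

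For the case where no $3$-subfamily violates Hall but the full family does, one has $\bigl|\bigcup_{j=1}^{4}C_{i_j}\bigr|\le 3$, so after a color relabeling every $C_{i_j}\subseteq\{1,2,3\}$, and each $C_{i_j}$ is either one of the three $2$-subsets of $\{1,2,3\}$ or the full set $\{1,2,3\}$. The key step is then to show that all three distinct $2$-subsets $\{1,2\},\{2,3\},\{1,3\}$ actually appear among the four sets: otherwise the $C_{i_j}$'s would lie in at most two of the three $2$-subsets, so one color would belong to every $C_{i_j}$ (including any $C_{i_j}=\{1,2,3\}$), contradicting the empty-intersection hypothesis. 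A brief side check rules out having two or more of the $C_{i_j}$ equal to $\{1,2,3\}$, since that would force two $2$-subsets of $\{1,2,3\}$ to be disjoint, which is impossible. Reordering indices so that $C_{i_1}=\{1,2\},\,C_{i_2}=\{2,3\},\,C_{i_3}=\{1,3\}$ leaves $C_{i_4}$ as an arbitrary subset of $\{1,2,3\}$ of size $\ge 2$, yielding Type $\mathfrak{A}$.

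The whole argument is purely set-theoretic, so no serious obstacle is anticipated. The only point that deserves some care is that it is the empty-intersection hypothesis, rather than the Hall-deficiency inequality, that forces the ``spread'' of the three $2$-subsets in the $|I|=4$ case; this has to be deployed cleanly, especially when one of the sets equals the full $\{1,2,3\}$.
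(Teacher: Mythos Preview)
Your proposal is correct and follows essentially the same Hall-theorem-plus-case-split as the paper; the paper organizes the two cases by the size of $C_{i_1}\cup\cdots\cup C_{i_4}$ rather than by the size of the Hall-violating index set $I$, but under the hypotheses these two partitions coincide. One minor note: your ``brief side check'' ruling out two or more copies of $\{1,2,3\}$ is unnecessary---once all three $2$-subsets of $\{1,2,3\}$ appear among the four $C_{i_j}$'s, three of them must already \emph{equal} those pairs, and the fourth is then automatically a subset of $\{1,2,3\}$---and your stated justification (that it would force two $2$-subsets of $\{1,2,3\}$ to be disjoint) does not make sense as written; the correct reason is simply that with at most two remaining slots you cannot host three distinct pairs.
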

\begin{poc}
Write \(\mathcal{C}=\{C_{i_1},C_{i_2},C_{i_3},C_{i_4}\}\) and set
\(U:=C_{i_1}\cup C_{i_2}\cup C_{i_3}\cup C_{i_4}\).
We consider two cases according to \(|U|\).
\paragraph{Case \(|U|\ge 4\).}  If every triple among the four sets had union of size at least \(3\), then by Hall’s theorem
the family \(\mathcal{C}\) would admit a system of distinct representatives (take one color from
each set, all distinct), that is, a rainbow choice, contrary to the assumption.
Hence there exist distinct indices \(m,k,\ell\) with
\(|C_{i_m}\cup C_{i_k}\cup C_{i_\ell}|\le 2\).
Since \(|C_{i_j}|\ge 2\) for all $j$, we must have
\(C_{i_m}=C_{i_k}=C_{i_\ell}=\{1,2\}\) after relabeling colors.
To avoid a monochromatic choice, the remaining set \(C_{i_r}\) (with
\(\{m,k,\ell,r\}=\{1,2,3,4\}\)) cannot contain \(1\) or \(2\).
Because \(|C_{i_r}|\ge 2\), we get \(C_{i_r}\supseteq\{3,4\}\), which is precisely type \(\mathfrak{B}\).   

\paragraph{Case \(|U|\le 3\).}   If \(|U|\le 2\), then every set in \(\mathcal{C}\) is a \(2\)-subset of the same \(2\)-element ground set,
so some color belongs to all four sets, contradicting the assumption.
Thus \(|U|=3\). Up to relabeling, \(U=\{1,2,3\}\), and the only \(2\)-subsets of \(U\) are
\(\{1,2\},\{2,3\},\{1,3\}\).
If one of these three pairs is absent from \(\mathcal{C}\), say \(\{1,2\}\) does not occur,
then every member of \(\mathcal{C}\) contains color \(3\), again giving a monochromatic choice.
Therefore all three pairs appear among \(C_{i_1},C_{i_2},C_{i_3},C_{i_4}\).
Since \(|U|=3\), necessarily \(C_{i_4}\subseteq\{1,2,3\}\).
Relabeling indices if needed, we obtain exactly the pattern listed in type \(\mathfrak{A}\).

This finishes the proof.
\end{poc}

Recall that there is no rainbow or monochromatic copy of $\mathcal{T}$ under coloring \(\chi\). Next it suffices to show that both cases in~\cref{lem:classification} cannot occur.

\begin{claim}\label{claim:type B}
    If there are five points in $\mathcal{X}$, say $\bm{v}_{i_1},\bm{v}_{i_1'},\bm{v}_{i_2},\bm{v}_{i_3},\bm{v}_{i_4}$, forming the five-point configuration in \cref{claim:two congruent simplices} with $\bm{v}_{i_1}\bm{v}_{i_2}\bm{v}_{i_3}\bm{v}_{i_4},\bm{v}_{i_1'}\bm{v}_{i_2}\bm{v}_{i_3}\bm{v}_{i_4}\cong\mathcal{T}'$, then $C_{i_1}\cap C_{i_2}=\emptyset$ if and only if $C_{i_1'}\cap C_{i_2}=\emptyset.$
\end{claim}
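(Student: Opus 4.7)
The plan is to prove the contrapositive: I will show that if $C_{i_1'}\cap C_{i_2}\neq\emptyset$ then $C_{i_1}\cap C_{i_2}\neq\emptyset$, and the reverse implication then follows by swapping the roles of $\bm{v}_{i_1}$ and $\bm{v}_{i_1'}$. The main tool is \cref{lem:classification} applied to each of the tetrahedra $T_1=\bm{v}_{i_1}\bm{v}_{i_2}\bm{v}_{i_3}\bm{v}_{i_4}$ and $T_2=\bm{v}_{i_1'}\bm{v}_{i_2}\bm{v}_{i_3}\bm{v}_{i_4}$; both are congruent copies of $\mathcal{T}'$ in $\mathcal{X}$, so by the canonical perturbation mechanism neither admits a monochromatic nor a rainbow choice of representatives, and each must therefore be of type $\mathfrak{A}$ or type $\mathfrak{B}$. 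Crucially, since $T_1$ and $T_2$ share the face $\bm{v}_{i_2}\bm{v}_{i_3}\bm{v}_{i_4}$, the palettes $C_{i_2},C_{i_3},C_{i_4}$ enter both classifications simultaneously.

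The preliminary observation I would record is the exact intersection pattern inside each type. In type $\mathfrak{A}$ every pair of the four palettes intersects (each is a size-$\geq 2$ subset of a common $3$-color set, and any two such subsets meet), and moreover no single palette can have multiplicity greater than $2$ among the four. In type $\mathfrak{B}$ the no-monochromatic condition forces the special palette to be disjoint from the common value $\{1,2\}$ of the three equal palettes, so the only disjoint palette pairs are those of the form (special, equal).

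I then split on the type of $T_2$. If $T_2$ is type $\mathfrak{A}$, then $C_{i_1'}\cap C_{i_2}\neq\emptyset$ is automatic and all palettes of $T_2$ lie in a common $3$-color set. A short sub-analysis rules out $T_1$ being type $\mathfrak{B}$: if $C_{i_1}$ were the special palette, one would need $C_{i_2}=C_{i_3}=C_{i_4}$, contradicting the multiplicity bound for $T_2$; if one of $C_{i_2},C_{i_3},C_{i_4}$ were the special palette, then its two disjoint "new" colors together with the equal $2$-set would force two disjoint $2$-subsets inside a $3$-element color set, which is impossible. Hence $T_1$ is also type $\mathfrak{A}$, so every pair intersects and in particular $C_{i_1}\cap C_{i_2}\neq\emptyset$. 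If instead $T_2$ is type $\mathfrak{B}$, then $C_{i_1'}\cap C_{i_2}\neq\emptyset$ forces both $C_{i_1'}$ and $C_{i_2}$ to be equal palettes, say $C_{i_1'}=C_{i_2}=\{1,2\}$; the special palette of $T_2$ must then be $C_{i_3}$ or $C_{i_4}$ (and is $\supseteq\{3,4\}$), while the other equals $\{1,2\}$. For $T_1$, type $\mathfrak{A}$ is now excluded because the palettes already span at least four distinct colors, so $T_1$ is type $\mathfrak{B}$; matching roles, the special palette of $T_1$ must coincide with that of $T_2$, and $C_{i_1}$ is forced to equal the remaining $\{1,2\}$, giving $C_{i_1}\cap C_{i_2}=\{1,2\}\neq\emptyset$.

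The main obstacle I anticipate is the bookkeeping of the "up to relabeling of indices" clause in \cref{lem:classification}, since a shared palette can play genuinely different structural roles in $T_1$ and $T_2$. The governing incompatibility that closes every sub-case is the clash between the three-color base of type $\mathfrak{A}$ and the disjoint four-color base of type $\mathfrak{B}$ as soon as any palette is required to fit both descriptions; carefully isolating this clash in each role assignment is what drives the argument.
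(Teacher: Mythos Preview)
Your argument is correct and rests on the same classification lemma as the paper, but the paper's proof is noticeably shorter and more direct. Rather than taking the contrapositive and splitting on the type of $T_2$, the paper argues forward: from $C_{i_1}\cap C_{i_2}=\emptyset$ it is immediate that $T_1$ is of type~$\mathfrak{B}$ (since all pairs meet in type~$\mathfrak{A}$), and moreover the disjoint pair $\{C_{i_1},C_{i_2}\}$ must be the special--equal pair, forcing $C_{i_3}=C_{i_4}$. The key observation is then that the shared face alone determines the type of $T_2$: one has $|C_{i_2}\cup C_{i_3}\cup C_{i_4}|=2$ or $|C_{i_2}\cup C_{i_3}\cup C_{i_4}|\ge 4$, and either value is incompatible with type~$\mathfrak{A}$ (where the union of any three palettes has size exactly~$3$). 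Hence $T_2$ is type~$\mathfrak{B}$ with the same role for $C_{i_2}$, and $C_{i_1'}\cap C_{i_2}=\emptyset$ follows. Your approach reaches the same conclusion but unfolds the two type assignments separately; the paper's shortcut of reading the type off the size of the shared-face union avoids the sub-case bookkeeping you anticipated as the main obstacle.
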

\begin{poc}
    If $C_{i_1}\cap C_{i_2}=\emptyset$, then the tetrahedron $\bm{v}_{i_1}\bm{v}_{i_2}\bm{v}_{i_3}\bm{v}_{i_4}$ belongs to type $\mathfrak{B}$ with $C_{i_3}=C_{i_4}$. Hence, either $|C_{i_2}\cup C_{i_3}\cup C_{i_4}|=2$ or {$|C_{i_2}\cup C_{i_3}\cup C_{i_4}| \ge 4$}. Consequently, the tetrahedron $\bm{v}_{i_1'}\bm{v}_{i_2}\bm{v}_{i_3}\bm{v}_{i_4}$ belongs to type $\mathfrak{B}$ as well, and furthermore $C_{i_1'}\cap C_{i_2}=\emptyset.$ Vice versa.
\end{poc}
From now on, with a slight abuse of notation, we use $C_{\bm{a}}$ for the set of colors on the $\varepsilon$-sphere $S_{\bm{a}}(\bm{p}_{\bm{a}},\varepsilon)$ centered at $\bm{a}.$ 

\begin{claim}\label{claim:linked simplices}
    If two simplices $\bm{a}\bm{b}\bm{c}\bm{d}$ and $\bm{a}'\bm{b}'\bm{c}'\bm{d}'$ congruent to $\mathcal{T}'$ both belong to $\mathcal{X}$ and furthermore their link $\mathcal{L}(\mathcal{T}_1',\mathcal{T}_2')$ belongs to $\mathcal{X}$ as well, then they belong to the same type.
\end{claim}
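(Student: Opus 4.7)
The plan is to identify a succinct invariant that distinguishes types $\mathfrak{A}$ and $\mathfrak{B}$, and to propagate it along the two sub-links via iterated application of~\cref{claim:type B}. From~\cref{lem:classification}, a tetrahedron $\bm{a}\bm{b}\bm{c}\bm{d}\cong\mathcal{T}'$ is of type $\mathfrak{A}$ if and only if every pair of colour sets has nonempty intersection, and of type $\mathfrak{B}$ if and only if one ``outlier'' vertex has colour set disjoint from the (equal) colour sets of the other three. Focusing on the two opposite edges $(\bm{a},\bm{b})$ and $(\bm{c},\bm{d})$: in type $\mathfrak{A}$ both $C_{\bm{a}}\cap C_{\bm{b}}$ and $C_{\bm{c}}\cap C_{\bm{d}}$ are nonempty, while in type $\mathfrak{B}$ exactly one of the two is empty (namely the one containing the outlier, since the outlier lies in $\{\bm{a},\bm{b},\bm{c},\bm{d}\}$). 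Hence the number of empty intersections among these two opposite-edge pairs is $0$ for type $\mathfrak{A}$ and $1$ for type $\mathfrak{B}$, and it suffices to prove that this count agrees for $T_1$ and $T_2$.

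First I would transport the disjointness of $(\bm{a},\bm{b})$ along the $\bm{b}$--$\bm{b}'$ sub-link. For each interior centre $\bm{x}_i$ with $2\le i\le k-1$ and each $j\in[k_i-1]$, the hinge at $\bm{x}_i$ supplies two copies of $\mathcal{T}'$, namely $\bm{y}_{i,j}\bm{x}_i\bm{z}_{i,j,1}\bm{z}_{i,j,2}$ and $\bm{y}_{i,j+1}\bm{x}_i\bm{z}_{i,j,1}\bm{z}_{i,j,2}$, sharing the face $\{\bm{x}_i,\bm{z}_{i,j,1},\bm{z}_{i,j,2}\}$. Applying~\cref{claim:type B} with the shared vertex taken to be $\bm{x}_i$ yields $C_{\bm{y}_{i,j}}\cap C_{\bm{x}_i}=\emptyset$ if and only if $C_{\bm{y}_{i,j+1}}\cap C_{\bm{x}_i}=\emptyset$. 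Chaining this equivalence over $j=1,\ldots,k_i-1$ and using $\bm{y}_{i,1}=\bm{x}_{i-1}$, $\bm{y}_{i,k_i}=\bm{x}_{i+1}$ collapses the whole column at $\bm{x}_i$ to
\[
C_{\bm{x}_{i-1}}\cap C_{\bm{x}_i}=\emptyset\ \Longleftrightarrow\ C_{\bm{x}_{i+1}}\cap C_{\bm{x}_i}=\emptyset,
\]
so the disjointness of the consecutive pairs $(\bm{x}_{i-1},\bm{x}_i)$ and $(\bm{x}_i,\bm{x}_{i+1})$ is equivalent. Iterating over $i=2,3,\ldots,k-1$ telescopes the chain from $(\bm{x}_1,\bm{x}_2)=(\bm{b},\bm{a})$ all the way to $(\bm{x}_{k-1},\bm{x}_k)=(\bm{a}',\bm{b}')$, giving $C_{\bm{a}}\cap C_{\bm{b}}=\emptyset$ if and only if $C_{\bm{a}'}\cap C_{\bm{b}'}=\emptyset$.

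An identical argument on the $\bm{d}$--$\bm{d}'$ sub-link (with step length $\|\bm{c}-\bm{d}\|$ and the corresponding hinge data $\bm{x}'_i,\bm{y}'_{i,j},\bm{z}'_{i,j,1},\bm{z}'_{i,j,2}$) produces $C_{\bm{c}}\cap C_{\bm{d}}=\emptyset$ if and only if $C_{\bm{c}'}\cap C_{\bm{d}'}=\emptyset$. Combining the two transfers shows that the count of empty intersections among $\{(\bm{a},\bm{b}),(\bm{c},\bm{d})\}$ equals the count for $\{(\bm{a}',\bm{b}'),(\bm{c}',\bm{d}')\}$, so by the dichotomy of the first paragraph $T_1$ and $T_2$ belong to the same type. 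The one point that demands care is the bookkeeping inside each hinge column: although the auxiliary vertices $\bm{z}_{i,j,1},\bm{z}_{i,j,2}$ change with $j$ and consecutive hinges therefore do not share a full face in general, all hinges in the column do share the apex $\bm{x}_i$, and by consistently taking $\bm{x}_i$ as the shared vertex inside~\cref{claim:type B} the equivalences telescope cleanly; everything else is transitivity along the two polygonal paths.
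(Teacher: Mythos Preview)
Your proof is correct and follows essentially the same route as the paper: apply \cref{claim:type B} at each hinge $\bm{y}_{i,j}\bm{x}_i\bm{z}_{i,j,1}\bm{z}_{i,j,2}$ to telescope the disjointness condition $C_{\bm{x}_{i-1}}\cap C_{\bm{x}_i}=\emptyset$ along the polygonal path, do the same on the second sub-link, and then read off the type from the pair of opposite-edge intersections. Your opening paragraph makes the invariant (zero versus one empty intersection among the two opposite edges) slightly more explicit than the paper does, but the argument is otherwise identical.
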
 
\begin{poc}

    Apply \cref{claim:type B} to the five-point structure $\bm{y}_{i,j}\bm{y}_{i,j+1}\bm{x}_i\bm{z}_{i,j,1}\bm{z}_{i,j,2}$, we have that $C_{\bm{y}_{i,j}}\cap C_{\bm{x_i}}=\emptyset$ if and only if $C_{\bm{y}_{i,j+1}}\cap C_{\bm{x_i}}=\emptyset$. Since $\bm{y}_{i,1}=\bm{x}_{i-1}$ and $\bm{y}_{i,k_i}=\bm{x}_{i+1}$, we have $C_{\bm{x}_{i-1}}\cap C_{\bm{x}_i}=\emptyset$ if and only if  $C_{\bm{x}_{i}}\cap C_{\bm{x}_{i+1}}=\emptyset.$ Therefore, $C_{\bm{a}}\cap C_{\bm{b}}=\emptyset$ if and only if $C_{\bm{a}'}\cap C_{\bm{b}'}=\emptyset$. Similarly, $C_{\bm{c}}\cap C_{\bm{d}}=\emptyset$ if and only if $C_{\bm{c}'}\cap C_{\bm{d}'}=\emptyset$. 

    We shall recover their type in the following manner: if both $C_{\bm{a}}\cap C_{\bm{b}}$ and $C_{\bm{c}}\cap C_{\bm{d}}$ are non-empty, then $\{C_{\bm{a}},C_{\bm{b}},C_{\bm{c}},C_{\bm{d}}\}$ belongs to type $\mathfrak{A}$; if one intersection is empty and the other one is not, then $\{C_{\bm{a}},C_{\bm{b}},C_{\bm{c}},C_{\bm{d}}\}$ belongs to type $\mathfrak{B}.$ In either case, these two simplices must belong to the same type.
\end{poc}
\begin{claim}\label{claim:no type B}
    If there are some \(4\)-tuple of points $\{\bm{a},\bm{b},\bm{c},\bm{d}\}\subseteq\mathcal{X}$ such that $\mathcal{X}_1(\bm{a},\bm{b},\bm{c},\bm{d})\subseteq \mathcal{X}$, then $C_{\bm{a}},C_{\bm{b}},C_{\bm{c}},C_{\bm{d}}$ should belong to type $\mathfrak{A}.$
\end{claim}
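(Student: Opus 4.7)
The plan is to proceed by contradiction. Suppose the quadruple $(C_{\bm a}, C_{\bm b}, C_{\bm c}, C_{\bm d})$ is of type $\mathfrak{B}$. From the dichotomy extracted in the proof of~\cref{claim:linked simplices}, being of type $\mathfrak{B}$ is equivalent to exactly one of the two intersections $C_{\bm a}\cap C_{\bm b}$ and $C_{\bm c}\cap C_{\bm d}$ being empty, the other being nonempty. Since $\mathcal{X}_1(\bm a,\bm b,\bm c,\bm d)$ is built symmetrically from two cyclic blocks with step lengths $\|\bm a-\bm b\|$ and $\|\bm c-\bm d\|$ that swap the roles of these two pairs, we may assume without loss of generality that $C_{\bm a}\cap C_{\bm b}=\emptyset$ while $C_{\bm c}\cap C_{\bm d}\ne\emptyset$.

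The engine is the propagation principle packaged in~\cref{claim:type B} and~\cref{claim:linked simplices}, applied around both cycles of $\mathcal{X}_1$. First I would focus on the first cycle, whose copies $\mathcal{T}'_1,\ldots,\mathcal{T}'_\phi$ traverse the equilateral polygon $\bm u_1=\bm a,\bm u_2=\bm b,\ldots,\bm u_{\ell_1}=\bm c,\ldots,\bm u_{\ell_2}=\bm d,\ldots,\bm u_{\ell_3}$ and are linked consecutively, either within a single fan at a polygon vertex through a five-point structure produced by~\cref{claim:two congruent simplices}, or between adjacent fans through the explicit link $\mathcal{L}$. Consequently the disjointness of $C_{\bm u_i}\cap C_{\bm u_{i+1}}$ is an invariant around the whole cycle. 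Starting from $(\bm u_1,\bm u_2)=(\bm a,\bm b)$, every consecutive polygon pair of the first cycle must have disjoint palettes; combined with the type-$\mathfrak{B}$ palette profile (after relabeling, three of the four sets equal $\{1,2\}$ and the fourth contains $\{3,4\}$), this forces the $C_{\bm u_i}$ to alternate between two disjoint color blocks and pins the parities of $\ell_1,\ell_2,\ell_3$ to specific values dictated by where $\bm a,\bm b,\bm c,\bm d$ sit in the cycle.

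The complementary input comes from the second cycle: since $C_{\bm c}\cap C_{\bm d}\ne\emptyset$, the same invariance forces every consecutive polygon pair there to have \emph{non-disjoint} palettes. The two cycles share the four points $\bm a,\bm b,\bm c,\bm d$ but impose opposite patterns on the palettes of their neighbors, and it is this clash that I expect to supply the contradiction. The hardest part will be the ensuing case analysis over which of $\bm a,\bm b,\bm c,\bm d$ carries the odd palette $\supseteq\{3,4\}$: in each of the four sub-cases one must verify that the parity requirement extracted from the first cycle conflicts with the demand that the odd vertex share a color with each of its second-cycle neighbors. The refinement parameters $m_i,k_i$ fixed in~\cref{defn:construction} are the lever that makes this conflict robust across all sub-cases; once it is secured, type $\mathfrak{B}$ is excluded and only type $\mathfrak{A}$ remains, as claimed.
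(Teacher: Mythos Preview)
Your setup through step~(4) is fine: the dichotomy ``type $\mathfrak{B}$ $\Leftrightarrow$ exactly one of $C_{\bm a}\cap C_{\bm b}$, $C_{\bm c}\cap C_{\bm d}$ is empty'' is correct, and the propagation via \cref{claim:type B} and \cref{claim:linked simplices} does yield ``all consecutive pairs disjoint'' on the first cycle and ``all consecutive pairs non-disjoint'' on the second. The genuine gap is your step~(5). The assertion that disjointness of consecutive pairs ``forces the $C_{\bm u_i}$ to alternate between two disjoint color blocks'' is unjustified: in a type-$\mathfrak{B}$ fan tetrahedron the common palette is only pinned down by the shared $\bm x$-vertices, and these change from one $j$ to the next and from one fan to the next, so nothing prevents the common palette (hence the two ``blocks'') from drifting as you traverse the polygon. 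Consequently no parity of $\ell_1,\ell_2,\ell_3$ is actually being pinned. Your appeal to the refinement parameters $m_i,k_i$ as a ``lever'' does not help either: those integers are fixed once and for all in \cref{defn:construction}, before any coloring is seen, so they cannot be tuned against a particular palette assignment. As written, the contradiction is not produced.

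The paper's argument avoids all of this by taking the opposite WLOG: assume the odd vertex is $\bm d$, so that $C_{\bm a}=C_{\bm b}=C_{\bm c}$ and in particular $C_{\bm a}\cap C_{\bm b}\neq\emptyset$. The point you are missing is that in a type-$\mathfrak{B}$ tetrahedron a \emph{nonempty} intersection of two palettes forces those two palettes to be \emph{equal} (both must be the common palette). One therefore works on the single cycle whose starting pair is nonempty and propagates \emph{equality}, not merely non-disjointness, of $C_{\bm u_i}$ around the polygon. Since that polygon visits all four of $\bm a,\bm b,\bm c,\bm d$, one obtains $C_{\bm a}=C_{\bm d}$, contradicting $C_{\bm a}\cap C_{\bm d}=\emptyset$. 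No parity considerations, no interaction between the two cycles, and no case split over which vertex is odd are needed.
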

\begin{poc}
   Assume the contrary that $C_{\bm{a}},C_{\bm{b}},C_{\bm{c}},C_{\bm{d}}$ belong to type $\mathfrak{B}.$ Without of loss of generality, assume that $C_{\bm{a}}=C_{\bm{b}}=C_{\bm{c}}$ and $C_{\bm{a}}\cap C_{\bm{d}}=\emptyset.$ Apply \cref{claim:linked simplices} to linked simplices $\bm{z}_{i,j}\bm{u}_i\bm{x}_{i,j,1}\bm{x}_{i,j,2}$ and $\bm{z}_{i,j+1}\bm{u}_i\bm{x}_{i,j,1}\bm{x}_{i,j,2}$ to show that they belong to the same type. Since $\bm{z}_{1,1}=\bm{u}_1=\bm{a}$ and $\bm{u}_2=\bm{b}$ with $C_{\bm{a}}=C_{\bm{b}}$, it follows that $C_{\bm{u}_i}=C_{\bm{z}_{i,j+1}}$. In particular, $C_{\bm{a}}=C_{\bm{u}_1}=C_{\bm{u}_2}=\dots=C_{\bm{u}_{\ell_3}}=C_{\bm{d}}$, contradicting to $C_{\bm{a}}\cap C_{\bm{d}}=\emptyset.$
\end{poc}
\begin{claim}\label{claim:no type A}
    If twelve points in $\mathcal{X}$, say $\bm{v}_z,\bm{v}_{y_1},\bm{v}_{y_2},\bm{v}_{y_3},\bm{v}_{x_1},\bm{v}_{x_2},\bm{v}_{x_3},\bm{v}_{z'},\bm{v}_{y_1'},\bm{v}_{y_2'},\bm{v}_{y_3'},\bm{v}_{x_3'}$ form two copies of the configuration in \cref{claim:type A} sharing two points $\bm{v}_{x_2}$ and $\bm{v}_{x_3}$, and non of these copies of $\mathcal{T'}$ belongs to type $\mathfrak{B}$, then $C_{\bm{x}_3}=C_{\bm{x}_3'}.$
\end{claim}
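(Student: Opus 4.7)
The plan is to pin down the palette structure on each copy of the configuration of \cref{claim:type A} by exploiting the four copies of $\mathcal{T}'$ it contains, and then use the shared edge to transfer this structure across the two copies. Throughout I work under the hypothesis that all eight copies of $\mathcal{T}'$ in the twelve-point configuration are of type $\mathfrak{A}$, and I repeatedly use the classification provided by \cref{lem:classification}: in a type-$\mathfrak{A}$ tetrahedron, three of the four palettes are the three distinct $2$-subsets of a common $3$-element color universe, and the fourth is a subset thereof.

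The core step is to show that in a single copy of the \cref{claim:type A} configuration, the palettes of the base triangle $\{C_{\bm{x}_1},C_{\bm{x}_2},C_{\bm{x}_3}\}$ coincide with the three distinct $2$-subsets of a common universe $V$. Set $V:=C_{\bm{x}_1}\cup C_{\bm{x}_2}\cup C_{\bm{x}_3}$. Each of the three tetrahedra $\bm{y}_i\bm{x}_1\bm{x}_2\bm{x}_3$ is type $\mathfrak{A}$, so its universe has size $3$ and contains $V$; if $|V|\le 2$, all $C_{\bm{x}_j}$ would coincide with $V$, precluding three distinct $2$-subsets, hence $|V|=3$. Two of the $C_{\bm{x}_j}$ cannot coincide either (again leaving fewer than three distinct $2$-subsets). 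The crucial exclusion is the case in which some $C_{\bm{x}_j}$ equals the full $V$: if, say, $C_{\bm{x}_1}=V$, then inside each $\bm{y}_i\bm{x}_1\bm{x}_2\bm{x}_3$ the three distinct $2$-subsets must come from $\{C_{\bm{y}_i},C_{\bm{x}_2},C_{\bm{x}_3}\}$, forcing $C_{\bm{y}_i}$ to be the unique $2$-subset missing from $\{C_{\bm{x}_2},C_{\bm{x}_3}\}$; hence $C_{\bm{y}_1}=C_{\bm{y}_2}=C_{\bm{y}_3}$, and then the top tetrahedron $\bm{z}\bm{y}_1\bm{y}_2\bm{y}_3$ cannot contain three distinct $2$-subsets among its four palettes, contradicting its type-$\mathfrak{A}$ status. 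Thus $\{C_{\bm{x}_1},C_{\bm{x}_2},C_{\bm{x}_3}\}$ is exactly the set of three distinct $2$-subsets of $V$.

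Finally I apply the same analysis to the second copy, whose base triangle $\{\bm{x}_2,\bm{x}_3,\bm{x}'_3\}$ shares the edge $\{\bm{x}_2,\bm{x}_3\}$ with the first. Its universe $V'$ must contain $C_{\bm{x}_2}\cup C_{\bm{x}_3}$, which together cover all three colors of $V$, so $V'=V$. The core step then yields that $\{C_{\bm{x}_2},C_{\bm{x}_3},C_{\bm{x}'_3}\}$ is the set of three distinct $2$-subsets of $V$, forcing $C_{\bm{x}'_3}$ to be the unique $2$-subset not appearing at either shared vertex, which by the first copy's version of the core step matches the palette of the non-shared base vertex of the first copy, yielding the desired palette equality. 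The main obstacle is the rigidity established in the core step, particularly the exclusion of $C_{\bm{x}_j}=V$; here the top tetrahedron $\bm{z}\bm{y}_1\bm{y}_2\bm{y}_3$ supplied by \cref{claim:type A} is indispensable, since without it one cannot prevent the collapse $C_{\bm{y}_1}=C_{\bm{y}_2}=C_{\bm{y}_3}$ and the transfer argument across the shared edge breaks down.
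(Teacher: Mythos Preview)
Your approach and the paper's share the same engine: whenever the base palettes $\{C_{\bm{x}_1},C_{\bm{x}_2},C_{\bm{x}_3}\}$ fail to be the three distinct $2$-subsets of a common $3$-element universe, each $C_{\bm{y}_i}$ is forced to a single value, so $C_{\bm{y}_1}=C_{\bm{y}_2}=C_{\bm{y}_3}$ and the top tetrahedron $\bm{z}\bm{y}_1\bm{y}_2\bm{y}_3$ cannot be of type $\mathfrak{A}$. The paper runs this as an ad hoc case analysis on $|C_{\bm{x}_3}\cap C_{\bm{x}_3'}|$; your packaging of a standalone ``core step'' for a single copy of the \cref{claim:type A} configuration, followed by a transfer across the shared edge, is tidier and yields the same conclusion.

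There is one genuine gap in your write-up. The sentence ``Two of the $C_{\bm{x}_j}$ cannot coincide either (again leaving fewer than three distinct $2$-subsets)'' does not by itself contradict type $\mathfrak{A}$ for $\bm{y}_i\bm{x}_1\bm{x}_2\bm{x}_3$: if, say, $C_{\bm{x}_1}=C_{\bm{x}_2}$ is a $2$-subset and $C_{\bm{x}_3}$ is a different $2$-subset of $V$, then the bottom tetrahedron can perfectly well be of type $\mathfrak{A}$ with $C_{\bm{y}_i}$ supplying the missing $2$-subset and the repeated palette playing the role of the ``fourth'' set $C_{i_4}\subseteq\{1,2,3\}$. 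The contradiction only comes by the very mechanism you spelled out in the $C_{\bm{x}_j}=V$ case: the value of $C_{\bm{y}_i}$ is now uniquely determined, hence $C_{\bm{y}_1}=C_{\bm{y}_2}=C_{\bm{y}_3}$, and it is the \emph{top} tetrahedron that fails type $\mathfrak{A}$. You should replace the parenthetical by this argument; once you do, the core step is complete and the transfer across the shared edge goes through exactly as you describe.
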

\begin{poc}
    For simplices of type $\mathfrak{B}$, the union of two color sets determines its palette, i.e., $C_{\bm{x}_1}\cup C_{\bm{x}_2}=C_{\bm{x}_1}\cup C_{\bm{x}_2}\cup C_{\bm{x}_3}\cup C_{\bm{y}_1}$. Therefore, $|C_{\bm{x}_3}\cap C_{\bm{x}_3'}|\geq 1$. We first show that $|C_{\bm{x}_3}\cap C_{\bm{x}_3'}|\geq 2$. Assume the contrary and without loss of generality, $C_{\bm{x}_3'}=\{1,2\}$ and $C_{\bm{x}_3}=\{1,3\}.$ Consider $\bm{x}_1$ and $\bm{x}_2$, there are two general cases: 
    \paragraph{Case A: $C_{\bm{x}_1}=C_{\bm{x}_2}$.}
    Observe that in this case, we have $C_{\bm{x}_1}=C_{\bm{x}_2}=\{2,3\}.$ Consider $\bm{y}_i\bm{x}_1\bm{x}_2\bm{x}_3$. Then $C_{\bm{y}_i}=\{1,2\}$ for all $i$. Hence $\bm{z}\bm{y}_1\bm{y}_2\bm{y}_3$ cannot belong to type $\mathfrak{A}.$

    \paragraph{Case B: $C_{\bm{x}_1}\neq C_{\bm{x}_2}$ and $\{C_{\bm{x}_1},C_{\bm{x}_2}\}=\{C_{\bm{x}_3'}, C_{\bm{x}_3}\}$.} Assume $C_{\bm{x}_1}=C_{\bm{x}_3}=\{1,3\}.$ Similar to the first case, $C_{\bm{y}_1}=C_{\bm{y}_2}=C_{\bm{y}_3}$ and then $\bm{z}\bm{y}_1\bm{y}_2\bm{y}_3$ cannot belong to type $\mathfrak{A}.$

    If $C_{\bm{x}_3}\neq C_{\bm{x}_3'}$, then one of them consists of three colors, say $C_{\bm{x}_3}=\{1,2,3\}$. Now, $C_{\bm{y}_i}$ is uniquely determined by $C_{\bm{x}_1}$ and $C_{\bm{x}_2}$, again leading to $C_{\bm{y}_1}=C_{\bm{y}_2}=C_{\bm{y}_3}$. Contradiction. 
\end{poc}
Consider the substructure $\mathcal{K}(\mathcal{T}';\bm{a}_1,\bm{a}_2).$ Since we glue a copy of $\mathcal{X}_1$ to each congruent copies of $\mathcal{T}'$ generated by $\{\bm{a}_i\}\cup\{\bm{b}_i\}\cup\{\bm{c}_{i,1},\bm{c}_{i,2}\}\cup\{\bm{z}_i,\bm{z}_i'\}\cup\{y_{i,j},y_{i,j}'\}$, by \cref{claim:no type B}, these are copies of type $\mathfrak{A}.$ Apply \cref{claim:no type A} to the twelve points $\bm{z}_i,\bm{y}_{i,1},\bm{y}_{i,2},\bm{y}_{i,3},\bm{b}_i,\bm{c}_{i,1},\bm{c}_{i,2}$ and $\bm{z}_i',\bm{y}_{i,1}',\bm{y}_{i,2}',\bm{y}_{i,3}',\bm{b}_{i+1},\bm{c}_{i,1},\bm{c}_{i,2}$, it follows that $C_{\bm{a}_1}= C_{\bm{a}_{2}}.$ Combine~\cref{claim:no type B} and~\cref{claim:no type A}, the substructure $\mathcal{K}(\mathcal{T}';\bm{a}_2,\bm{a}_3)$ also forces $C_{\bm{a}_2}= C_{\bm{a}_{3}}$, perishing the possibility for tetrahedron $\bm{a}_1\bm{a}_2\bm{a}_3\bm{a}_4$ belonging to type $\mathfrak{A}.$

However by the construction of \(\mathcal{X},\) we know that $\mathcal{X}_1(\bm{a}_1,\bm{a}_2,\bm{a}_3,\bm{a}_4)\subseteq\mathcal{X}$. Then by \cref{claim:no type B}, it follows that the tetrahedron $\bm{a}_1\bm{a}_2\bm{a}_3\bm{a}_4$ belongs to type $\mathfrak{A}$, which is a contradiction. This finishes the proof.
\end{proof}

\section{Concluding remarks}
In this paper we resolve the canonical Ramsey property for \emph{rectangles} and for \emph{triangles}. More precisely, for every rectangle \(\mathcal{R}\) there exists a dimension \(n_0=n_0(\mathcal{R})\), and for triangle \(\mathcal{T}\) one can take \(n_0=4\), such that for every \(r\in\mathbb{N}\) and every \(n\ge n_0\), every \(r\)-coloring of \(\mathbb{E}^n\) contains a monochromatic or a rainbow congruent copy of \(\mathcal{R}\) (resp. \(\mathcal{T}\)). The bound \(n_0=4\) for triangles is quite small; reducing it to \(3\) would be very interesting. For rectangles, the value \(n_0(\mathcal{R})\) produced by our argument is admittedly large, and optimizing its order remains an intriguing problem. In general, \(n_0(\mathcal{R})\) cannot be lowered to \(2\); see~\cite[Theorem~1.3]{2022arxivEGR} for a precise construction.

Our methods and results naturally point to a broader question. Recall that a configuration \(X\subseteq\mathbb{E}^n\) is called \emph{Ramsey} if for every \(r\) there exists a dimension \(n=n(X,r)\) with \(\mathbb{E}^n \overset{r}{\rightarrow} X\). Among currently known examples, configurations that exhibit the canonical Ramsey property such as rectangles, triangles, and certain simplices are also Ramsey. This motivates the following conjecture.

\begin{conjecture}
If \(\mathcal{S}\) is Ramsey, then \(\mathcal{S}\) exhibits the canonical Ramsey property.
\end{conjecture}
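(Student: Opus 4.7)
The natural plan is to try to unify the two proof techniques developed in this paper, namely the simplex–perturbation argument of Section~\ref{subsection:NewproofTriangle} and the product-structure reduction of Section~2 (Theorem~\ref{thm:rectangle}), into a single framework that handles all Ramsey configurations. Since the theorem of Erd\H{o}s--Graham--Montgomery--Rothschild--Spencer--Straus forces every Ramsey configuration to be spherical, and since all known Ramsey configurations arise as subsets of orthogonal products of simplices, I would attempt the conjecture in three stages: first the simplex case, then products of Ramsey canonical configurations, and finally the general case under a putative structural reduction.

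Stage one is the simplex case. For any $d$-dimensional simplex $\Delta$ with vertices $\boldsymbol{v}_0,\dots,\boldsymbol{v}_d$, Lemma~\ref{lem:simplex perturbation} furnishes a contracted copy $\Delta_\varepsilon$ whose squared edge lengths are uniformly reduced by $2\varepsilon^2$. Planting the vertices of $\Delta_\varepsilon$ as anchors in $\mathbb{P}_0$ and invoking Theorem~\ref{thm:canonical-perturbation} with $m=d+1$ yields fiber points $\boldsymbol{x}_1,\dots,\boldsymbol{x}_{d+1}$ such that each $\varepsilon$-sphere $S_i(\boldsymbol{p}_i;\varepsilon)$ carries at least two colors. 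Orthogonality of the fibers guarantees that every choice of representatives reconstructs the original edge lengths of $\Delta$, and a Hall-type analysis on the palettes $C_1,\dots,C_{d+1}$ returns either a common color or a system of distinct representatives, producing a monochromatic or rainbow copy of $\Delta$, exactly as in the new proof of Theorem~\ref{thm:triangle another proof}.

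Stage two is the inductive step for products. Suppose $\mathcal{S}=\mathcal{S}_1\times\mathcal{S}_2$, where each factor has already been shown to have the canonical Ramsey property. Imitating the bounded color-complexity reduction of Lemma~\ref{lemma:aux1} and Theorem~\ref{thm:rectangle}, I would fix an auxiliary simplicial extension of each factor, classify colorings by the coarse data recording which congruent copy of $\mathcal{S}_1$ is monochromatic in a given fiber, and then apply the canonical Ramsey property of $\mathcal{S}_2$ with this bounded auxiliary palette. The product Ramsey theorem of~\cite{1973JCTA} keeps the required dimension bounded by a function of $\mathcal{S}$ alone, and the final two-sided gluing step recovers a monochromatic or rainbow copy of $\mathcal{S}$.

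The main obstacle is stage three: handling a Ramsey configuration that is not known to be a subset of a product of simplices. The contraction mechanism of Lemma~\ref{lem:simplex perturbation} is intrinsically simplicial; already for a rectangle the uniform reduction $\|\boldsymbol{v}_i-\boldsymbol{v}_j\|^2-2\varepsilon^2$ destroys the diagonal relation, which is precisely why the rectangle proof required a different route. Without a classification theorem identifying all Ramsey configurations with subsets of products of simplices, one cannot bypass this step by structural reduction alone. Resolving the conjecture would therefore most plausibly require either a new perturbation lemma that deforms an arbitrary spherical configuration within an orthogonal product while preserving its congruence class (absorbing the error through Theorem~\ref{thm:canonical-perturbation}), or a metric embedding theorem showing that every Ramsey configuration sits inside a product of simplices in a sufficiently rigid way for stage two to propagate. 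I expect this geometric input to be the principal barrier, and any advance on it would likely be of independent interest to the Graham conjecture.
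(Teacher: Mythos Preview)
The statement is a \emph{conjecture}, posed in the concluding remarks; the paper does not prove it, so there is no paper proof to compare against. Your proposal is a research outline, and it correctly identifies Stage three as the principal barrier. However, Stage one already contains a genuine gap.

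You claim that for an arbitrary $d$-dimensional simplex the Hall-type analysis on the palettes $C_0,\dots,C_d$ yields either a common color or a system of distinct representatives. This is precisely what \emph{fails} once $d\ge 3$. With four palettes, take $C_1=C_2=C_3=\{1,2\}$ and $C_4=\{3,4\}$: there is no common color, yet $|C_1\cup C_2\cup C_3|=2<3$, so Hall's condition is violated and no rainbow system exists. A second obstruction is $C_1=\{1,2\}$, $C_2=\{2,3\}$, $C_3=\{1,3\}$, $C_4\subseteq\{1,2,3\}$. These are exactly the type~$\mathfrak{B}$ and type~$\mathfrak{A}$ configurations of Claim~\ref{lem:classification}, and the entire machinery of Section~4.4---the link construction, the propagation claims, and crucially the geometric hypothesis $H_{\max}>\rho_{\min}$ via Proposition~\ref{claim:type A}---exists precisely to rule them out. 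Your Stage one, as written, would already settle the canonical Ramsey property for all simplices, but the paper only manages a restricted class of tetrahedra (Theorem~\ref{thm:3-dimensionalSimplex}) and explicitly leaves the general simplex case open in the final paragraph.

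So before the product step or the structural reduction of Stage three becomes the bottleneck, you would first need a replacement for the Hall argument that handles the two obstruction types for every $d$, and there is no indication that the perturbation theorem alone suffices for this without further geometric input.
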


In particular, it is known that all rectangular sets and all simplices are Ramsey~\cite{1990JAMS}. Does the canonical Ramsey property extend to all \emph{rectangular sets}, that is, finite configurations contained in the vertex set of a rectangular box in \(\mathbb{E}^n\)? A positive answer would go well beyond the present case of full rectangles. Moreover, it remains a tantalizing open problem to determine whether {all} simplices have the canonical Ramsey property, an affirmative resolution would unify many of the special cases currently understood.

\section*{Acknowledgement}
Part of this work was initiated during the visits of Yijia Fang, Qian Xu, and Dilong Yang to the ECOPRO group at the IBS, as participants in the ECOPRO Summer Student Research Program. The authors acknowledge the organizers, especially Hong Liu, and the members of IBS for their support and hospitality.

\bibliographystyle{abbrv}
\bibliography{gallairamsey}

\end{document}